\documentclass[oneside]{amsart}
\usepackage[colorlinks=true,allcolors=blue!80!black]{hyperref}
\usepackage{tikz-cd}
\usetikzlibrary{matrix, snakes, patterns, shadows.blur, backgrounds}

\usepackage{amssymb, amsthm}
\usepackage{enumerate, amsfonts, amsxtra,amsmath,latexsym,epsfig, color}
\usepackage{mathrsfs, MnSymbol}
\usepackage{geometry}                		
\geometry{letterpaper}                   		
\usepackage{subcaption}
\usepackage{autobreak}
\usepackage{color}
\usepackage{amsmath,amscd}	
\usepackage{mathtools}
\usepackage{stmaryrd}
\usepackage{animate}
\usepackage{tikz-cd}
\usepackage{soul}
\usepackage{faktor}
\usepackage{comment}

\usepackage{textgreek}  


\newtheorem {theorem}{Theorem}[section]
\newtheorem {lemma} [theorem] {Lemma}
\newtheorem {proposition} [theorem] {Proposition}
\newtheorem {corollary} [theorem] {Corollary}


\theoremstyle{definition}

\newtheorem{remark}[theorem]{Remark}

\DeclareMathOperator{\PSL}{PSL}

\DeclareMathOperator{\lcm}{lcm}

\DeclareMathOperator{\Ind}{Ind}
\DeclareMathOperator{\Gal}{Gal}


\title{Corresponding Abelian Extensions of Integrally Equivalent Number Fields}
\author{Shaver Phagan}
\address{Department of Mathematics\\ Purdue University, West Lafayette, IN}
\email{phagan@purdue.edu}

\begin{document}

\maketitle

\section*{Abstract}  Extensive work has been done to determine necessary and sufficient conditions for a bijective correspondence of abelian extensions of number fields to force an isomorphism of the base fields.  However, explicit examples of correspondences over non-isomorphic fields are rare.  Integrally equivalent number fields admit an induced correspondence of abelian extensions.  Studying this correspondence using idelic class field theory and linear algebra, we show that the corresponding extensions share features similar to those of arithmetically equivalent fields, and yet they are not generally weakly Kronecker equivalent.  We also extend a group cohomological result of Arapura et. al. and present geometric and arithmetic applications.

\section{Introduction and Background}
\subsection{Summary}\hfill\\\\
\indent D. Prasad noted an isomorphism of the idele class groups of some non-isomorphic number fields \cite{prasad_refined_2017}, which induces a correspondence of their abelian extensions.  This article is principally concerned with determining the arithmetic similarity of extensions corresponding under this bijection.  In doing so, one is led to the notions of arithmetic, Kronecker, and weak Kronecker equivalence as means of comparing the splitting behavior of rational primes.  In Section \ref{sol}, we record the fact that a generic correspondence of abelian extensions of number fields cannot generally preserve arithmetic equivalence classes.  Then, in Section \ref{cae}, we prove that \textit{our} corresponding extensions are not even weakly Kronecker equivalent in general, yet they share features similar to those of arithmetically equivalent fields.  Some necessary technical facts on group schemes are recorded in Section \ref{tori}. In Section \ref{homology}, we conclude by extending a group cohomological result of Arapura et. al.  This is motivated by a geometric application to manifolds with contractible universal cover, but the techniques shed further light on the arithmetic correspondence.  We describe below some of the essential ingredients in this article, reviewing relevant facts and establishing context.
\subsection{Corresponding Abelian Extensions}\label{caeintro}\hfill\\\\

\indent A triple of groups \((G,G_1,G_2)\) is called an \textbf{integral Gassmann triple} if there is an isomorphism \(\mathbb{Z}[G/G_1]\simeq\mathbb{Z}[G/G_2]\) of \(\mathbb{Z}G\)-modules, where the \(G\)-action is permutation of cosets induced by left multiplication.  Let \(K/\mathbb{Q}\) be a Galois extension of number fields with group \(G\).  For a subfield \(F\) of \(K\), let \(G_F\) be the subgroup of \(G\) fixing \(F\).  Subfields \(K_1\) and \(K_2\) of \(K\) are called \textbf{integrally equivalent} if \((G,G_{K_1},G_{K_2})\) is an integral Gassmann triple.  Integral equivalence of number fields \(K_1\) and \(K_2\) induces an isomorphism \(C_{K_1}\simeq C_{K_2}\) of idele class groups, and this gives a correspondence of their abelian extensions, by class field theory (c.f. Section \ref{cae}).  Letting \(\Delta_F\) be the discriminant of a number field \(F\), and \(K_n(F)\) its \(n^{th}\) \(K\)-group (c.f. Section \ref{Kgroups}), corresponding abelian extensions of integrally equivalent number fields enjoy the following relations.

\begin{theorem}\label{P}
    If \(L_i/K_i\), \(i=1,2\) are corresponding abelian extensions of integrally equivalent number fields, then, for \(m=[L_i:K_i]\),
    \begin{enumerate}
    \item \(L_1,L_2\) have the same degree, Galois closure, and maximal abelian subextension over \(\mathbb{Q}\)
    \item \(L_1\) and \(L_2\) have the same signature for \(m\) odd
    \item \(K_n(L_1)\simeq K_n(L_2)\) for odd \(n\geq 3\) when \(m\) is odd
    \item \(\Delta_{L_1}\) and \(\Delta_{L_2}\) have the same prime divisors
    \item\label{zetarels} There is a set \(\mathcal{P}\) of rational primes of positive density, so that if \(\zeta_{L_i}(s)=\sum_{\mathbb{N}_+}\frac{a_i(n)}{n}\), and every prime divisor of \(n\) is in  \(\mathcal{P}\), then \(a_1(n)=a_2(n)\) (c.f. Corollary \ref{zeta}).  Furthermore, the \(L_i\) are ultra-coarsely arithmetically equivalent (c.f. Section \ref{splittingtype}).
    \end{enumerate}
\end{theorem}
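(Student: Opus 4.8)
The plan is to reduce the coefficient identity to a statement about Euler factors and then to locate the primes on which those factors must coincide. Write $\zeta_{L_i}(s)=\sum_n a_i(n)n^{-s}$; since each $a_i$ is multiplicative, the asserted equality $a_1(n)=a_2(n)$ for all $n$ whose prime divisors lie in a set $\mathcal P$ follows as soon as the local Euler factor of $\zeta_{L_1}$ agrees with that of $\zeta_{L_2}$ at every $p\in\mathcal P$, equivalently, as soon as $p$ has the same decomposition type (multiset of residue degrees) in $L_1$ and in $L_2$. Because the integral Gassmann isomorphism tensors to the rational one, $K_1$ and $K_2$ are already arithmetically equivalent, so the decomposition type of $p$ in the base fields $K_1,K_2$ agrees for every unramified $p$; the content is therefore to control the further splitting in the abelian layers $L_i/K_i$.

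For that I would pass to class field theory. The integral equivalence furnishes the isomorphism $\phi\co C_{K_1}\xrightarrow{\sim}C_{K_2}$ observed by Prasad \cite{prasad_refined_2017} (c.f. Section \ref{cae}), and \emph{corresponding} means $\phi$ carries the norm subgroup $H_1=\ker(C_{K_1}\to\Gal(L_1/K_1))$ onto $H_2$, inducing $\beta\co\Gal(L_1/K_1)\xrightarrow{\sim}\Gal(L_2/K_2)$. For $p$ unramified the residue degree of a prime of $L_i$ above a prime $\mathfrak p\mid p$ of $K_i$ is the order of $\mathrm{Frob}_{\mathfrak p}$ in $Q_i:=\Gal(L_i/K_i)$, so the decomposition type of $p$ in $L_i$ is encoded in the multiset $\{\operatorname{ord}_{Q_i}(\mathrm{Frob}_{\mathfrak p})\}_{\mathfrak p\mid p}$. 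The key local input is that at a prime $p$ which splits completely in $K$ the decomposition group is trivial, the $d=[K_i:\mathbb Q]$ primes above $p$ all have completion $\mathbb Q_p$, and the local avatar of $\phi$ is the restriction of the underlying $\mathbb Z G$-module isomorphism $\theta\co\mathbb Z[G/G_{K_1}]\simeq\mathbb Z[G/G_{K_2}]$; it identifies the two uniformizer lattices $\mathbb Z^d$ by some $A\in\GL_d(\mathbb Z)$ intertwining the local Artin maps $\psi_i\co\mathbb Z^d\to Q_i$, that is $\psi_2\circ A=\beta\circ\psi_1$.

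With this in hand I would take \[ \mathcal P=\{\,p : p\text{ splits completely in the Galois closure }\tilde L/\mathbb Q\text{ of }L_1L_2\,\}, \] discarding the finitely many ramified primes. For $p\in\mathcal P$ every residue degree in both $L_1$ and $L_2$ equals $1$, so the Euler factors agree tautologically — each equals $(1-p^{-s})^{-[L_i:\mathbb Q]}$, with the exponents equal by part (1) — and Chebotarev gives $\mathcal P$ density $1/[\tilde L:\mathbb Q]>0$; combined with the first paragraph this yields $a_1(n)=a_2(n)$ on $n$ supported in $\mathcal P$, which is Corollary \ref{zeta}. For the ultra-coarse statement I would instead exploit the invariants of the configuration $(\mathbb Z^d,\psi_i)$ that survive the $\GL_d(\mathbb Z)$-ambiguity: although $A$ is not a permutation of the primes, it preserves the rank of the sublattice on which $\psi_i$ vanishes, and hence the property ``$p$ splits completely in $L_i$'' among $p$ split in $K$; transporting this through $\phi$ shows $L_1$ and $L_2$ share the same ultra-coarse splitting datum in the sense of Section \ref{splittingtype}.

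The main obstacle is exactly the gap between a $\mathbb Z G$-module isomorphism and an isomorphism of $G$-sets. Because $\theta$, and hence the local component of $\phi$, is only a lattice isomorphism and need not permute the primes of $K_i$ above $p$, the fine multiset $\{\operatorname{ord}_{Q_i}(\mathrm{Frob}_{\mathfrak p})\}$ — and therefore the exact splitting type and Euler factor — is not preserved in general; this is precisely what makes the $L_i$ fail to be weakly Kronecker equivalent, and it is what forces the retreat to the completely split set $\mathcal P$ for the exact coefficient identity and to $\GL_d(\mathbb Z)$-invariant data for the general splitting assertion. Making the intertwining $\psi_2\circ A=\beta\circ\psi_1$ precise — matching the local factor of $\phi$ with $\theta$ at the split primes and checking compatibility with the reciprocity maps at the remaining places — is the technical heart of the argument.
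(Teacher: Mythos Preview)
Your approach is correct and close in spirit to the paper's, but the two diverge on the choice of $\mathcal P$ and on how the ultra-coarse statement is extracted.

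For the coefficient identity, the paper does not retreat all the way to primes split in the Galois closure of $L_1L_2$. Instead it takes $\mathcal P$ to be the primes that split \emph{relatively} completely in $L_i/K_i$ (every prime of $K_i$ above $p$ splits completely in $L_i$, with no constraint on the splitting of $p$ in $K_i$ itself). The local isomorphisms $\varphi_\omega:\prod_{\nu\mid\omega}K_{1,\nu}^\times\to\prod_{\eta\mid\omega}K_{2,\eta}^\times$ carry $N_{1,\omega}$ to $N_{2,\omega}$, so this property is the same for $i=1,2$; then $S_{L_i}(p)$ is just $S_{K_i}(p)$ with each entry repeated $m$ times, and arithmetic equivalence of the $K_i$ gives $S_{L_1}(p)=S_{L_2}(p)$. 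This buys a strictly larger $\mathcal P$ (density $1/|G|$ rather than $1/[\tilde L:\mathbb Q]$) and actually uses the arithmetic equivalence of the base fields, whereas your choice---as you rightly call it---is tautological.

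For ultra-coarse equivalence the paper again works with the $\varphi_\omega$ directly: $\omega$ splits completely in $L_i$ iff it splits completely in $K_i$ and $N_{i,\omega}=\prod_{\nu\mid\omega}K_{i,\nu}^\times$, and both conditions transfer under $\varphi_\omega$ for every $\omega$. Your lattice formulation (rank of $\ker\psi_i$ preserved by $A\in\GL_d(\mathbb Z)$) is the special case where $p$ is already split in $K$; that restriction is harmless here since a prime split completely in $L_i$ is automatically split in $K$, but the paper's version avoids having to set up the intertwining $\psi_2\circ A=\beta\circ\psi_1$ you flag as the technical heart---it simply reads off ``$N_{i,\omega}$ is everything'' from the local norm-group picture. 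Same Galois closure then yields ultra-coarse equivalence by Proposition~\ref{uclemma}. Your diagnosis of the obstruction (lattice isomorphism versus $G$-set isomorphism) is exactly right and is what the paper exploits later in Theorem~\ref{notwKeq}.
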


\noindent Readers acquainted with the topic of \textit{arithmetic similarity} (c.f. Section \ref{splittingtype}, and \cite{perlis_equation_1977, klingen_zahlkorper_1978, jehne_kronecker_1977, lochter_new_1995, lochter_weakly_1994, lochter_weak, klingen_arithmetical_1998}) might have already noted the similarity of Theorem \ref{P} to the following.

\begin{theorem}\cite{perlis_equation_1977}\label{PP}
    If \(F_1,F_2\) are arithmetically equivalent number fields, then
    \begin{enumerate}
    \item \(F_1,F_2\) have the same degree, Galois closure, and maximal Galois subextension over \(\mathbb{Q}\)
    \item \(F_1\) and \(F_2\) have the same signature
    \item \(K_n(F_1)\simeq K_n(F_2)\) for odd \(n\geq 3\)
    \item \(\Delta_{F_1}=\Delta_{F_2}\)
    \item \(\zeta_{F_1}=\zeta_{F_2}\).
    \end{enumerate}
\end{theorem}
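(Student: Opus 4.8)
The plan is to place both fields inside a common finite Galois extension and convert every assertion into the representation theory of the Galois group. Concretely, I would choose a Galois $K/\mathbb{Q}$ containing $F_1$ and $F_2$, set $G=\Gal(K/\mathbb{Q})$ and $G_i=\Gal(K/F_i)$, and invoke the Gassmann characterization (Perlis): $F_1$ and $F_2$ are arithmetically equivalent precisely when $\mathbb{Q}[G/G_1]\cong\mathbb{Q}[G/G_2]$ as $\mathbb{Q}G$-modules, equivalently when the permutation characters $\chi_i=\Ind_{G_i}^G\mathbf{1}$ coincide, equivalently when $|C\cap G_1|=|C\cap G_2|$ for every conjugacy class $C$ of $G$. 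Passing from the raw hypothesis ``$\zeta_{F_1}=\zeta_{F_2}$'' to this condition is the one place where analysis enters, through Chebotarev: the Euler factors record, for each unramified $p$ with Frobenius class $C$, the cycle type of $C$ on $G/G_i$, and equality on a density-one set of primes forces $\chi_1=\chi_2$; if one instead adopts the Gassmann condition as the definition, this step is vacuous. Once $\chi_1=\chi_2$ is in hand, the bulk of the theorem is formal.

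Indeed, I would read off (5), (4), (2), and (1) directly from the equality of characters. For the zeta functions, the Artin factorization $\zeta_{F_i}(s)=\prod_{\rho}L(s,\rho)^{\langle\chi_i,\rho\rangle}$, over the irreducible $\rho$ of $G$, has exponents $\langle\chi_i,\rho\rangle=\langle\mathbf{1},\Res_{G_i}\rho\rangle=\dim\rho^{G_i}$ that agree for $i=1,2$, whence $\zeta_{F_1}=\zeta_{F_2}$; the conductor--discriminant formula $|\Delta_{F_i}|=\prod_{\rho}N(\mathfrak{f}_\rho)^{\langle\chi_i,\rho\rangle}$ gives equal absolute discriminants, and the sign $(-1)^{r_2}$ matches because the signatures agree. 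The signature is itself the value of the permutation character at a complex conjugation $c\in G$: the number of real places of $F_i$ is the number of cosets of $G/G_i$ fixed by $c$, namely $\chi_i(c)$, so equal characters force equal $(r_1,r_2)$ (the common degree being $\chi_i(1)$). For (1), the degree is $\chi_i(1)$; the Galois closure of $F_i$ is $K^{\operatorname{core}(G_i)}$ with $\operatorname{core}(G_i)=\bigcap_g gG_ig^{-1}$ the kernel of the permutation representation $\mathbb{Q}[G/G_i]$, so isomorphic modules have equal kernels and equal Galois closures; finally the maximal Galois subextension is $K^{G_i^G}$ for the normal closure $G_i^G$, recovered character-theoretically as $G_i^G=\bigcap\{\ker\rho:\langle\chi_i,\rho\rangle=\rho(1)\}$ (an irreducible $\rho$ occurs with full multiplicity $\rho(1)$ in $\chi_i$ exactly when $G_i\subseteq\ker\rho$, and the irreducibles of $G/G_i^G$ have trivial joint kernel), which depends only on $\chi_i$.

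The genuinely delicate item is (3), and I expect it to be the main obstacle, because arithmetic equivalence supplies only a rational isomorphism $\mathbb{Q}[G/G_1]\cong\mathbb{Q}[G/G_2]$ and so controls ranks but not a priori integral torsion. For odd $n=2i-1\ge 3$ the group $K_n(F_i)$ is finitely generated, and I would split it into free and torsion parts. The rank is handled by Borel's theorem, which gives $\dim_\mathbb{Q}K_n(F_i)\otimes\mathbb{Q}$ as $r_1+r_2$ or $r_2$ according to $n\bmod 4$; since $(r_1,r_2)$ is shared by (2), the ranks agree. For the torsion one uses that in odd degrees $\ge 3$ the torsion of $K_{2i-1}$ is governed by the cyclotomic roots-of-unity invariants $w_i(F)$ (essentially the orders of the $\Gal$-fixed $\mathbb{Q}/\mathbb{Z}(i)$), which are determined by the splitting of primes in the cyclotomic layers and hence are functions of the permutation character; equality of these invariants then upgrades the rank agreement to an isomorphism of finitely generated abelian groups. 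The restriction to odd $n\ge 3$ is essential and should be emphasized: the even groups encode class-group-type data such as $K_{2i-2}(\mathcal{O}_F)$, which is not a rational Gassmann invariant, so no such isomorphism can be expected there. The principal technical care, then, lies in making the torsion comparison rest only on character-theoretic (equivalently, zeta) data rather than on finer integral structure.
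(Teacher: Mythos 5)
Your proposal is correct and takes essentially the same approach as the paper and its cited source: items (1), (2), (4), (5) are quoted from Perlis, whose standard proofs are precisely the Gassmann/character-theoretic arguments you reconstruct (Artin factorization, conductor--discriminant, fixed points of complex conjugation, cores and normal closures), and the only item the paper proves itself, item (3) in Proposition \ref{qgivesk}, is handled by you in the same way --- matching signatures and the roots-of-unity invariants \(w_i\) via \(F_1\cap\mathbb{Q}(\mu)=F_2\cap\mathbb{Q}(\mu)\) (a Gassmann invariant, since the fields contain the same normal subfields) and then invoking the structure formula for \(K_n\) with odd \(n\geq 3\). I see no gaps; your remark that the even \(K\)-groups carry class-group data beyond rational Gassmann invariance correctly explains the restriction to odd \(n\).
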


\noindent This resemblance between arithmetically equivalent number fields and corresponding abelian extensions of integrally equivalent number fields is especially striking, since we show in Theorem \ref{notwKeq} that the corresponding extensions are not even weakly Kronecker equivalent in general.  On the other hand, integral equivalence appears to be a significantly stronger condition than arithmetic equivalence, and, since it induces a correspondence of abelian extensions, one might guess that certain qualities of arithmetic similarity are inherited.\\\\  
\noindent A triple of groups \((G,G_1,G_2)\) is called a \textbf{Gassmann triple} if there is an isomorphism of \(\mathbb{Q}G\)-modules \(\mathbb{Q}[G/G_1]\simeq\mathbb{Q}[G/G_2]\).  Number fields \(F_1\) and \(F_2\) are \textbf{arithmetically equivalent} if and only if \((G,G_{F_1},G_{F_2})\) is a Gassmann triple, where \(G\) is the group of the Galois closure of the \(F_i\), \(i=1,2\), over the rational numbers.  From the perspective of group (co)homology, the constituents \(G_1,G_2\) of an integral Gassmann triple \((G,G_1,G_2)\) enjoy equality of (co)homology groups with trivial action on torsion coefficients, but this is not always so for an ordinary Gassmann triple \((G,G_1,G_2)\) (c.f. Corollary \ref{blemma} in this article, and \cite{bartel_torsion_2016}).  Furthermore, there are several constructions which produce Gassmann triples (c.f. \cite{perlis_equation_1977, sunada_riemannian_1985}), but all known examples of integral Gassmann triples fundamentally arise from a \textit{nuclear} triple, originally due to L. Scott \cite{scott_integral_1993}, and whose existence appears to the author a miracle.  Scott noticed that \(G=\PSL(2,\mathbb{F}_{29})\) has non-conjugate subgroups \(G_1\) and \(G_2\), both isomorphic to \(A_5\), such that \((G,G_1,G_2)\) is an integral Gassmann triple.  It was D. Zywina who then showed that \(G\) is the Galois group of an extension of number fields \cite{zywina_inverse_2015},  and D. Prasad who first noted that this implies the isomorphism of idele class groups of what we are calling integrally equivalent number fields \cite{prasad_refined_2017}. We recall some history motivating our study of the induced correspondence of abelian extensions.\\\\

\noindent For number fields \(K_1,K_2\) with absolute Galois groups \(\Gamma_1,\Gamma_2,\) respectively, the celebrated Neukirch-Uchida Theorem \cite{uchida_isomorphisms_1976} says that \(\Gamma_1\simeq\Gamma_2\) if and only if \(K_1\simeq K_2\), but it is known that an isomorphism of abelianized Galois groups \(\Gamma_1^{ab}\simeq\Gamma_2^{ab}\) does not imply an isomorphism \(K_1\simeq K_2\).  Indeed, \(\Gamma_1^{ab}\simeq\Gamma_2^{ab}\) for non-isomorphic integrally equivalent \(K_1,K_2\), since \(\Gamma_{i}^{ab}\simeq \widehat{C_{K_i}}\), where \(C_{K_i}\) is the idele class group, and the hat indicates profinite completion.  Now, Cornelissen et. al. have devoted considerable effort to reconstructing global fields from their abelianized Galois groups and related data (c.f. \cite{cornelissen_curves_2013, cornelissen_reconstructing_2017, ancona_quantum_2014}), culminating in \cite{cornelissen_characterization_2018}, whose main theorem says the existence of an isomorphism \(\Gamma_1^{ab}\simeq\Gamma_2^{ab}\) inducing an \(L\)-function preserving correspondence of Dirichlet characters is equivalent to an isomorphism \(K_1\simeq K_2\), for global fields \(K_1\) and \(K_2\).  Furthermore, they prove the following theorem in the number field case.
\begin{theorem}\label{dschar}
\cite{cornelissen_characterization_2018} Given a number field \(K\) and an integer \(m\geq 3\), there is a character \(\chi\) of \(\Gamma_K^{ab}\) of order \(m\) such that if \(F\) is a number field admitting a character \(\chi'\) with \(L_K(\chi)=L_F(\chi')\), then \(F\simeq K\). 
\end{theorem}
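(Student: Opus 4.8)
The plan is to pass to idelic class field theory and then exploit the fact that a single twisted $L$-series already records the full local splitting-with-character data of $K$. Via Artin reciprocity $\Gamma_K^{ab}\simeq\widehat{C_K}$, a character $\chi$ of order $m$ is the same datum as a finite-order Hecke character, and for all but finitely many rational primes $p$ the $p$-Euler factor of $L_K(\chi,s)$ is the rational function $\prod_{\mathfrak{p}\mid p}\bigl(1-\chi(\mathfrak{p})N\mathfrak{p}^{-s}\bigr)^{-1}=\prod_i\bigl(1-\zeta_i\,p^{-f_i s}\bigr)^{-1}$, where $f_i=f(\mathfrak{p}_i\mid p)$ and $\zeta_i=\chi(\mathfrak{p}_i)\in\mu_m$. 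First I would observe that the Dirichlet coefficients $a(p^k)$ determine this local factor at each good $p$, and hence, by uniqueness of factorization in $\mathbb{C}[[p^{-s}]]$, recover the entire multiset $\{(f_i,\zeta_i)\}_{\mathfrak{p}_i\mid p}$ of residue degrees paired with character values.

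The second step reads off global invariants from this local data. Since $\sum_i f_i=[K:\mathbb{Q}]$ at every unramified $p$, an equality $L_K(\chi)=L_F(\chi')$ forces $[F:\mathbb{Q}]=[K:\mathbb{Q}]$ and, upon discarding the values $\zeta_i$, forces $K$ and $F$ to have the same decomposition type at almost all primes; equivalently $\zeta_K=\zeta_F$, so $K$ and $F$ are arithmetically equivalent, and the matched value-multisets simultaneously force $\chi'$ to have order exactly $m$. By Theorem \ref{PP} arithmetically equivalent fields share a discriminant, and there are only finitely many number fields of a given degree and discriminant, so $F$ must lie in the finite set $\{K=F_0,F_1,\dots,F_r\}$ of isomorphism classes arithmetically equivalent to $K$. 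This reduces the theorem to a separation problem inside a fixed finite list.

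The heart of the argument is then to construct one order-$m$ character $\chi$ whose finer data — the values $\zeta_i$, not merely the residue degrees $f_i$ — cannot be reproduced by any character on any $F_j$ with $F_j\not\simeq K$. For each $j$ let $B_j$ be the set of order-$m$ characters $\chi$ of $\Gamma_K^{ab}$ for which some $\chi'$ on $F_j$ satisfies $L_K(\chi)=L_{F_j}(\chi')$. I would argue that membership $\chi\in B_j$ makes the prime-by-prime matching of the pairs $(f_i,\zeta_i)$ assemble, as the conductor and support of $\chi$ grow, into a reciprocity-compatible identification of the associated cyclic extensions, and thence into enough compatibility to feed the reconstruction mechanism behind the main characterization theorem of \cite{cornelissen_characterization_2018}, forcing $K\simeq F_j$ — a contradiction. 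Hence $\chi\in B_j$ should impose infinitely many independent coincidences among the character values, placing $B_j$ in a thin subset of $\Hom(\Gamma_K^{ab},\mu_m)$. Since the number of order-$m$ ray-class characters of conductor dividing $\mathfrak{m}$ grows with $\mathfrak{m}$ while the list $\{F_j\}$ is fixed and finite, a counting argument should then produce an order-$m$ character $\chi$ outside $\bigcup_j B_j$, the hypothesis $m\ge 3$ ensuring that the value group $\mu_m$ is genuinely complex and therefore rich enough to separate, in contrast to the coarse quadratic case $m=2$.

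The main obstacle is exactly the rigidity claim of the previous paragraph: equality of a single Dirichlet series is generically far from enough to pin down a field — the case $\chi=\mathbf{1}$ yields only arithmetic equivalence, which does not determine $K$ — so the entire weight of the proof rests on showing that one sufficiently generic order-$m$ character eliminates every non-isomorphic partner at once. Controlling the finitely many impostors $F_j$ simultaneously, and quantifying how large the order and conductor of $\chi$ must be before the recovered value-data becomes rigid, is the delicate point; I expect to need the conductor–discriminant formula together with a Chebotarev equidistribution argument over the Galois closure of the compositum of $K$ with the $F_j$ to render the genericity of $\chi$ effective.
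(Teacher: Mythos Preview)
The paper does not give its own proof of this theorem; it is quoted from \cite{cornelissen_characterization_2018}, and the only trace of the argument in the paper is Lemma~\ref{dscharacter} together with the wreathing-extension machinery set up around it. That machinery, however, is the actual content: the distinguishing character $\chi$ is not found by a genericity or counting argument, but is \emph{constructed explicitly}. One takes a wreathing extension $K'$ of $K/\mathbb{Q}$ by a cyclic group $C$ of order $m$, so that $\Gal(K'/\mathbb{Q})\simeq C\wr G$ with $G$ the Galois group of the closure of $K$, and lets $\chi=\chi_{K'}$ be the projection onto the first $C$-coordinate. The point is that $\Ind_{G'_K}^{G'}\chi$ is then irreducible and monomial in an extremely rigid way (Lemma~\ref{strongsollemma}), and equality $L_K(\chi)=L_F(\chi')$ forces $\Ind_F\chi'$ to match this irreducible; analyzing how an irreducible representation of a wreath product can be induced from a linear character pins down the subgroup up to conjugacy, i.e.\ forces $F\simeq K$. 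No finiteness of arithmetic-equivalence classes, no counting of ray-class characters, and no Chebotarev equidistribution enter.

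Your proposal has a genuine gap precisely at the step you flag as the ``heart of the argument.'' Reducing to the finite list $\{F_0,\dots,F_r\}$ of fields arithmetically equivalent to $K$ is correct and easy, but after that you need, for each non-isomorphic $F_j$, that the set $B_j$ of order-$m$ characters on $K$ sharing an $L$-function with some character on $F_j$ is thin. You propose to obtain this by letting the local matchings ``assemble\ldots into enough compatibility to feed the reconstruction mechanism behind the main characterization theorem of \cite{cornelissen_characterization_2018}.'' That is circular: the main theorem of \cite{cornelissen_characterization_2018} requires an isomorphism $\Gamma_K^{ab}\simeq\Gamma_{F_j}^{ab}$ preserving $L$-functions of \emph{all} characters, whereas a single equality $L_K(\chi)=L_{F_j}(\chi')$ gives you nothing of the sort, and there is no mechanism in your outline for upgrading one matched $L$-function to a full $L$-preserving isomorphism. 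Nor is there any independent argument that $B_j$ is a proper subset, let alone thin; a priori it could be everything. The missing idea is exactly the wreath-product construction: it produces \emph{one} $\chi$ whose induced representation is so structured that a single $L$-equality already forces conjugacy of the inducing subgroups, bypassing any need to vary $\chi$ or to count.
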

\noindent As a corollary, one can deduce that a number field is uniquely determined up to isomorphism by the set of Dedekind zeta functions of its finite abelian extensions, as observed by Solomatin in \cite{solomatin_note_2019}. We recall the argument below for the convenience of the reader (c.f. Section \ref{sol}).  This result suggests asking how arithmetically similar corresponding abelian extensions can be when the base fields are non-isomorphic.  Despite the cited work of Cornelissen et. al., \textit{examples} of correspondences of abelian extensions do not abound in the literature, so this converse question has not received much attention.  We explore it here and provide an answer with Theorems \ref{P} and \ref{notwKeq}.\\

\noindent Some final remarks on notation and terminology.  Throughout the sequel, the symbol \(\prod\) is used to indicate a direct product, except in the proofs of Theorem \ref{idelenorm} and Corollary \ref{zeta}.  Furthermore, an integral Gassmann triple \((\Gamma,\Gamma_1,\Gamma_2)\) is assumed to have \([\Gamma:\Gamma_i]<\infty\), and number fields \(K_1,K_2\) will be called \textbf{integrally equivalent over} \(F\) if \((G,G_{K_1},G_{K_2})\) is an integral Gassmann triple, where \(G\) is the group of the Galois closure of \(K_i/F\), \(i=1,2\).  Lastly, \(0\in\mathbb{N}\), and \(\mathbb{N}_+=\mathbb{N}-\{0\}\).

\subsection{Comparing Splitting Types}\label{splittingtype}\hfill\\\\
\indent Let \(p\mathcal{O}_K=\mathfrak{p}_1^{e_1}\cdots\mathfrak{p}_n^{e_n}\) be the decomposition of the rational prime \(p\) into prime ideals in the ring of integers of a number field \(K\).  Define the \textbf{splitting type} \(S_K(p)\) as the multiset of residues of \(K\) over \(p\), that is \(S_K(p)=\{\{f_i\}\}_{i=1}^n\), with \(f_i=[\mathcal{O}_K/\mathfrak{p}_i:\mathbb{Z}/p\mathbb{Z}]\).  Arithmetic equivalence of number fields \(F_1,F_2\) can be characterized as an equality \(S_{F_1}(p)=S_{F_2}(p)\) for all but finitely many rational primes \(p\). Call \(F_1,F_2\) \textbf{Kronecker equivalent} if \(1\in S_{F_1}(p) \text{ if and only if } 1\in S_{F_2}(p)\) for all but finitely many \(p\), and \textbf{weakly Kronecker equivalent} if \(\text{gcd}(S_{F_1}(p))=\text{gcd}(S_{F_2}(p))\) for all but finitely many \(p\).  We record a useful theorem for reference later.
\begin{theorem}[\cite{lochter_weak} Theorem \(3'\) ]\label{noexwKreq}
 If \(F_1\) and \(F_2\) are weakly Kronecker equivalent number fields, then for any prime \(p\), \(\gcd(S_{F_1}(p))=1\) if and only if \(\gcd(S_{F_2}(p))=1\).
\end{theorem}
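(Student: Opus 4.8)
The plan is to translate weak Kronecker equivalence into a purely group-theoretic condition via Chebotarev's density theorem, dispatch the unramified primes directly, and then treat the finitely many ramified (or otherwise exceptional) primes by analyzing the decomposition and inertia groups. Concretely, let $M/\mathbb{Q}$ be a Galois extension containing both $F_1$ and $F_2$, put $G=\Gal(M/\mathbb{Q})$ and $H_i=\Gal(M/F_i)$. For an unramified prime $p$ with Frobenius class $[\sigma]\subseteq G$, the splitting type $S_{F_i}(p)$ is the multiset of cycle lengths of $\sigma$ acting on $G/H_i$. Since every conjugacy class of $G$ is a Frobenius class for a set of primes of positive density, while the defining condition of weak Kronecker equivalence excludes only finitely many primes, I would first deduce that for every $\sigma\in G$ the gcd of the cycle lengths of $\sigma$ on $G/H_1$ equals that on $G/H_2$. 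In particular the full equality $\gcd(S_{F_1}(p))=\gcd(S_{F_2}(p))$ already holds for all unramified $p$, settling that case.

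The second ingredient is an elementary lemma isolating the role of a single prime $q$. Writing $\sigma_{(q)}$ for the $q$-part of $\sigma$ (the generator of the Sylow $q$-subgroup of $\langle\sigma\rangle$), one checks that a point of $G/H$ has $\sigma$-orbit length coprime to $q$ precisely when it is fixed by $\sigma_{(q)}$; hence $q\nmid\gcd(\text{cycle lengths of }\sigma\text{ on }G/H)$ if and only if $\sigma_{(q)}$ is $G$-conjugate into $H$. Thus, for unramified $p$, $\gcd(S_{F_i}(p))=1$ exactly when $\sigma_{(q)}$ is conjugate into $H_i$ for every prime $q$, and the group-theoretic condition from the first step says precisely that for every element $\rho\in G$ and every prime $q$, the $q$-part $\rho_{(q)}$ is conjugate into $H_1$ if and only if it is conjugate into $H_2$.

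For a ramified prime $p$ I would fix a prime $\mathfrak{P}$ of $M$ above $p$ with decomposition group $D$, inertia group $I\trianglelefteq D$, and $\sigma\in D$ lifting a generator of $D/I$. The primes of $F_i$ above $p$ correspond to the $D$-orbits on $G/H_i$, and a standard computation identifies the residue degrees with the cycle lengths of $\sigma$ acting on the set $I\backslash G/H_i$ of $I$-orbits. Applying the elementary lemma to this cyclic action gives $q\nmid\gcd(S_{F_i}(p))$ if and only if a suitable power of $\sigma$ fixes some $I$-orbit, i.e. lies in $I\,gH_ig^{-1}$ for some $g$. The crux is to convert this into a statement about genuine elements of $G$: I expect to prove the Claim that $\gcd(S_{F_i}(p))=1$ if and only if, for every prime $q$, there is some $\iota\in I$ with $(\sigma\iota)_{(q)}$ conjugate into $H_i$, so that the elements of the Frobenius coset $\sigma I\subseteq D$ furnish the relevant $\rho$. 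Granting the Claim, the condition of the previous paragraph, applied to each $\rho=\sigma\iota$ and each $q$, shows that "$(\sigma\iota)_{(q)}$ conjugate into $H_i$" is independent of $i$; taking the conjunction over $q$ and the disjunction over $\iota\in I$ then yields $\gcd(S_{F_1}(p))=1\iff\gcd(S_{F_2}(p))=1$.

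The main obstacle is exactly the Claim: matching "$\gcd(S_{F_i}(p))=1$" with conjugacy of the $q$-parts of the Frobenius-coset elements $\sigma\iota$ into $H_i$. The difficulty is that the cycle lengths governing the residue degrees come from the action of $\sigma$ on the double-coset set $I\backslash G/H_i$, which is not itself a transitive $G$-set, so the clean lemma for coset spaces does not apply verbatim; one must instead track how $I$ interacts with each prime $q$ separately—harmlessly when $q\nmid|I|$, but with genuine care in the wild case $q\mid|I|$, where the choice of $\iota$ is essential. Handling this bookkeeping uniformly, so that the per-prime and per-element equivalences supplied by Chebotarev can be invoked, is the heart of the argument; once it is in place the theorem follows formally.
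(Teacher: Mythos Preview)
The paper does not prove this statement; it is quoted from Lochter's work as Theorem~$3'$ there and used as a black box in the proof of Theorem~\ref{notwKeq}. There is therefore no argument in the paper to compare your proposal against.

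As for the substance of your proposal: the Chebotarev reduction and the treatment of unramified primes are correct, and the characterization you extract---that for every prime-power element $\tau\in G$ one has $\tau$ conjugate into $H_1$ if and only if $\tau$ conjugate into $H_2$---is exactly the group-theoretic content of weak Kronecker equivalence. For the ramified primes, the Claim you isolate is indeed the heart of the matter, and passing to $q$-parts is the right idea. A clean way to close the gap, which sidesteps the question of whether the elements $(\sigma\iota)_{(q)}$ exhaust the relevant set as $\iota$ ranges over $I$, is to show directly that $q\nmid\gcd(S_{F_i}(p))$ holds if and only if some $q$-element $\tau\in D$ whose image generates the Sylow $q$-subgroup of $D/I$ is $G$-conjugate into $H_i$. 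One direction follows by replacing any $\sigma^{n'}\iota\in gH_ig^{-1}$ (here $n'=[D:I]/q^{v_q([D:I])}$) by its $q$-part, which is a power and hence still lies in $gH_ig^{-1}$; the other follows by raising such a $\tau$ to a power prime to $q$ so that its image in $D/I$ becomes exactly $\sigma^{n'}I$. Since the prime-power criterion is symmetric in $i=1,2$, the equivalence $q\nmid\gcd(S_{F_1}(p))\Leftrightarrow q\nmid\gcd(S_{F_2}(p))$ follows for every $q$, and hence the theorem. Your formulation via $(\sigma\iota)_{(q)}$ may well be equivalent to this, but verifying that equivalence is an extra step the alternative phrasing avoids.
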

\noindent In fact, \(F_1\) and \(F_2\) are Kronecker equivalent if and only if \(\mathcal{N}_{F_1,p}=\mathcal{N}_{F_2,p}\) for all \(p\), where \(\mathcal{N}_{F_i,p}=\{\sum_j n_jf_j\text{ }|\text{ }n_j\in\mathbb{N},f_j\in S_{F_i}(p)\}\) \cite{lochter_new_1995}.  In particular, Kronecker equivalence implies weak Kronecker equivalence, and, clearly, arithmetic equivalence implies Kronecker equivalence. Furthermore, for Kronecker equivalent \(F_1,F_2\), if \(\zeta_{F_i}(s)=\sum_{n\in\mathbb{N}_+}\frac{a_i(n)}{n^s}\), then \(a_1(n)\neq 0\) if and only if \(a_2(n)\neq 0\), so that Kronecker equivalence is a sort of \textit{approximate arithmetic equivalence}.  Weak Kronecker equivalence does not support a similar interpretation in a nice way, but a related notion does. We call number fields \(F_1\) and \(F_2\) such that \(\lcm(S_{F_1}(p))=\lcm(S_{F_2}(p))\) for all but finitely many \(p\) \textbf{ultra-coarsely arithmetically equivalent}.  This terminology is motivated by Proposition \ref{ultra-coarse}.  As it turns out, number fields are ultra-coarsely arithmetically equivalent if and only if they have the same Galois closure. 
\begin{proposition}\label{uclemma}
    Number fields \(K_1,K_2\) are ultra-coarsely arithmetically equivalent if and only if they have the same Galois closure over \(\mathbb{Q}\).
\end{proposition}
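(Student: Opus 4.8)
The plan is to reinterpret the quantity $\lcm(S_K(p))$ group-theoretically as the order of a Frobenius element and then apply the Chebotarev density theorem. First I would fix a single Galois extension $M/\mathbb{Q}$ containing both $K_1$ and $K_2$ — for instance the compositum of their Galois closures — and set $G = \Gal(M/\mathbb{Q})$ and $H_i = \Gal(M/K_i)$. For a rational prime $p$ unramified in $M$, the standard description of prime decomposition via the permutation action of $G$ on the coset space $G/H_i$ identifies the multiset $S_{K_i}(p)$ of residue degrees with the cycle type of a Frobenius element $\mathrm{Frob}_p$ acting on $G/H_i$ (well-defined up to conjugacy, so its cycle type, and in particular its order, is independent of the choice). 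Since the order of a permutation is the least common multiple of its cycle lengths, $\lcm(S_{K_i}(p))$ is exactly the order of the permutation induced by $\mathrm{Frob}_p$ on $G/H_i$.

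Next I would observe that this permutation action factors through $G/\mathrm{core}_G(H_i)$, where $\mathrm{core}_G(H_i) = \bigcap_{g\in G} gH_ig^{-1}$ is precisely the kernel of $G\to\mathrm{Sym}(G/H_i)$; moreover the fixed field of $\mathrm{core}_G(H_i)$ is the compositum of the conjugates of $K_i$, namely the Galois closure $\widetilde{K_i}$. Writing $N_i := \mathrm{core}_G(H_i) = \Gal(M/\widetilde{K_i})$, a normal subgroup of $G$, I therefore conclude that $\lcm(S_{K_i}(p))$ equals the order of the image of $\mathrm{Frob}_p$ in $\Gal(\widetilde{K_i}/\mathbb{Q}) = G/N_i$. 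The reverse implication of the proposition is then immediate: if $\widetilde{K_1} = \widetilde{K_2}$ then $N_1 = N_2$, and the two orders agree for every prime unramified in $M$.

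For the forward implication, suppose $\lcm(S_{K_1}(p)) = \lcm(S_{K_2}(p))$ for all but finitely many $p$. Because each $N_i$ is normal, the order of the image of $g$ in $G/N_i$ is a class function of $g$. By the Chebotarev density theorem every conjugacy class of $G$ is the Frobenius class of infinitely many primes unramified in $M$, so the union of the ramified primes and the finitely many hypothesis-exceptions still omits none of these classes; selecting for each $g\in G$ a prime $p$ outside this finite set with $\mathrm{Frob}_p$ conjugate to $g$, I obtain the equality $\mathrm{ord}(gN_1) = \mathrm{ord}(gN_2)$ of orders in $G/N_1$ and $G/N_2$, valid for every $g\in G$.

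The conclusion follows from the elementary remark that $\mathrm{ord}(gN_i) = 1$ if and only if $g\in N_i$: taking any $g\in N_1$ forces $\mathrm{ord}(gN_2) = \mathrm{ord}(gN_1) = 1$, whence $g\in N_2$, so $N_1\subseteq N_2$, and by symmetry $N_1 = N_2$. Passing to fixed fields gives $\widetilde{K_1} = \widetilde{K_2}$, as desired. I expect the only genuine subtlety to lie in the first two paragraphs — correctly matching residue degrees with the cycle lengths of Frobenius, and verifying that the normal core of $\Gal(M/K_i)$ cuts out the Galois closure — since once $\lcm(S_{K_i}(p))$ has been recast as the order of a Frobenius element, the remainder is a routine Chebotarev argument combined with the trivial characterization of the identity coset.
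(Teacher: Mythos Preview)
Your proof is correct and follows essentially the same approach as the paper: both identify $\lcm(S_{K_i}(p))$ with the order of a Frobenius element in the Galois group of the Galois closure of $K_i$. The paper's forward direction is marginally more direct---it simply observes that $\lcm(S_{K_i}(p))=1$ exactly when $p$ splits completely, so ultra-coarse equivalence forces the same completely-split primes and hence the same Galois closure---whereas you invoke Chebotarev to match orders for every conjugacy class before specializing to order~$1$; but the underlying idea is identical.
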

\begin{proof}
    If \(K_1,K_2\) are ultra-coarsely arithmetically equivalent number fields, a rational prime splits completely in \(K_1\) if and only if it does so in \(K_2\), with at most finitely many exceptions, so the Galois closure of \(K_1/\mathbb{Q}\) is equal to that of \(K_2/\mathbb{Q}\).  Now, suppose \(K_1,K_2\) are number fields with the same Galois closure \(K/\mathbb{Q}\), and let \(G=\Gal(K/\mathbb{Q})\). Letting \(\lambda_{i,p}=\lcm(S_{K_i}(p))\), if \(p\) is a rational prime unramified in \(K\) with Frobenius class \(F_p\subset G\), we have \(\lambda_{i,p}|n\) if and only if \(F_p^n\subset G_{K_i}\), which is equivalent to \(F_p^n=1\), since \(K\) is the Galois closure of \(K_i\), and \(F_p^n\) is a \(G\)-conjugacy class.  In particular, \(\lambda_{1,p}=o(F_p)=\lambda_{2,p}\).
\end{proof}

\begin{proposition}\label{ultra-coarse}
    Suppose \(F_1,F_2\) are ultra-coarsely arithmetically equivalent number fields, and let \(S\) be the finite set of rational primes \(p\) such that \(\lcm(S_{F_1}(p))\neq\lcm(S_{F_2}(p))\).  If \(\zeta_{F_i}(s)=\sum_{n\in\mathbb{N}_+}\frac{a_i(n)}{n^s}\), and \(n\) is a positive integer with no prime divisor in \(S\), we may write \(\lambda_p=\lcm(S_{F_i}(p))\) unambiguously for \(p\) dividing \(n\), and, furthermore, if \(a_1(n)\neq 0\), there is \(m\in\mathbb{N}_+\) with \(a_2(m)\neq 0\) and \(n\Lambda^{-1}<m<n\Lambda\), where \(\Lambda = p_1^{\lambda_{p_1}}\cdots p_k^{\lambda_{p_k}}\) and \(p_1,...,p_k\) are the prime divisors of \(n\).
\end{proposition}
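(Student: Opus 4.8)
The plan is to reduce everything to a statement about the numerical semigroups $\mathcal{N}_{F_i,p}$ by exploiting the multiplicativity of the ideal-counting coefficients, and then to produce $m$ by rounding each prime exponent of $n$ up to a multiple of the appropriate $\lambda_p$. First I would recall that $a_i(n)$ is the number of integral ideals of $\mathcal{O}_{F_i}$ of norm $n$, and that the Euler product for $\zeta_{F_i}$ exhibits $a_i$ as a multiplicative function whose local factor at $p$ is $\prod_{\mathfrak{p}\mid p}(1-x^{f_{\mathfrak{p}}})^{-1}$ with $x=p^{-s}$. Reading off the coefficient of $x^{e}$ shows that $a_i(p^{e})\neq 0$ exactly when $e$ is a non-negative integral combination of the residue degrees over $p$, i.e. when $e\in\mathcal{N}_{F_i,p}$. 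Consequently $a_i(n)\neq 0$ if and only if $v_p(n)\in\mathcal{N}_{F_i,p}$ for every prime $p\mid n$, where $v_p$ denotes the $p$-adic valuation.

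The key elementary observation is that $\lambda_p\in\mathcal{N}_{F_i,p}$ for each $i$: choosing any $f\in S_{F_i}(p)$, we have $f\mid\lambda_p$, so $\lambda_p=(\lambda_p/f)\,f$ is a multiple of $f$ and hence lies in the semigroup, and therefore so does every non-negative multiple of $\lambda_p$. Since no prime divisor of $n$ lies in the exceptional set $S$, for each such prime $\lambda_p=\lcm(S_{F_1}(p))=\lcm(S_{F_2}(p))$ is well-defined and belongs to \emph{both} $\mathcal{N}_{F_1,p}$ and $\mathcal{N}_{F_2,p}$. This is the single fact that makes the two fields interchangeable at the primes dividing $n$.

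Now assume $a_1(n)\neq 0$ and $n>1$ (for $n=1$ the claimed range is empty). For each prime divisor $p_j$ of $n$ set $e_j=v_{p_j}(n)\in\mathcal{N}_{F_1,p_j}$, and let $e_j'$ be the least multiple of $\lambda_{p_j}$ with $e_j'\geq e_j$, so that $e_j\leq e_j'<e_j+\lambda_{p_j}$ and $e_j'\in\mathcal{N}_{F_2,p_j}$ by the previous paragraph. Putting $m=\prod_j p_j^{e_j'}$, multiplicativity gives $a_2(m)=\prod_j a_2(p_j^{e_j'})\neq 0$. The exponent bounds then force $n=\prod_j p_j^{e_j}\leq m<\prod_j p_j^{e_j+\lambda_{p_j}}=n\Lambda$, and since $\Lambda>1$ we obtain $n\Lambda^{-1}<n\leq m<n\Lambda$, as required.

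I expect no genuine obstacle here: once the coefficient positivity is identified with membership in $\mathcal{N}_{F_i,p}$, the whole proposition collapses to the rounding trick, and the bound on $m/n$ is immediate. The only point demanding care is that identification $a_i(p^{e})\neq 0\iff e\in\mathcal{N}_{F_i,p}$ together with the verification that $\lambda_p$ lies in the semigroup; this is the conceptual heart, and the remainder is bookkeeping.
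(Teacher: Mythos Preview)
Your argument is correct and follows essentially the same route as the paper: both reduce the question to the observation that $\lambda_p\mathbb{N}\subset\mathcal{N}_{F_i,p}$ for $i=1,2$, then adjust each exponent of $n$ to a nearby element of $\mathcal{N}_{F_2,p}$ within distance $\lambda_p$. The only cosmetic difference is that you explicitly round each exponent up to the nearest multiple of $\lambda_{p_j}$ and recover the lower bound via $\Lambda>1$, whereas the paper simply asserts the existence of $d_j\in\mathcal{N}_{F_2,p_j}$ with $|d_j-c_j|<\lambda_{p_j}$ and reads off both inequalities at once.
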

\begin{proof}
    Given a prime \(p\) not in \(S\), we know \(\lambda_p\mathbb{N}_+\subset\mathcal{N}_{F_i,p}\), \(i=1,2\), so if \(c_p\in\mathcal{N}_{F_1,p}\), there is \(d_p\in\mathcal{N}_{F_2,p}\) such that \(|c_p-d_p|<\lambda_p\).  If \(n=p_1^{c_1}\cdots p_{k}^{c_k}\), we know that \(c_j\in\mathcal{N}_{F_1,p_j}\), so there is \(d_j\in\mathcal{N}_{F_2,p_j}\) such that \(|d_j-c_j|<\lambda_{p_j}\), for \(1\leq j\leq k\).  Setting \(m=p_1^{d_1}\cdots p_k^{d_k}\), we know that \(m\) is the absolute norm of some ideal in \(\mathcal{O}_{F_2}\), so that \(a_2(m)\neq 0\), and furthermore, \(\frac{m}{n}=p_1^{d_1-c_1}\cdots p_k^{d_k-c_k}\), so that \(\Lambda^{-1}<\frac{m}{n}<\Lambda\).
\end{proof}

\subsection{\(K\)-Groups of Odd Index}\label{Kgroups}\hfill\\\\
\indent The \(K\)-theory of a number field \(F\) is a more contemporary invariant. We will restrict our attention here to groups \(K_n(F)\) with \(n\) odd.  We show that arithmetically equivalent fields have the same \(K\)-groups with odd \(n\geq 3\) and record some useful facts about the groups \(K_n(F)\) for the convenience of the reader.  More details can be found in Section 5.3 of \cite{weibel_algebraic_nodate} or Chapter VI of \cite{weibel_k-book_2013}.\\

\noindent The \(K\)-groups are abelian, and \(K_1(F)=F^\times\).  For odd \(n\geq 3\), \(K_n(F)\) is a finitely generated group, given by the following rule.
$$
K_n(F)\simeq\begin{cases}
    \mathbb{Z}^{r_1+r_2}\bigoplus\mathbb{Z}/w_i\mathbb{Z}, & n\equiv 1 \text{ mod } 8\\
    \mathbb{Z}^{r_2}\bigoplus(\mathbb{Z}/2\mathbb{Z})^{r_1-1}\bigoplus\mathbb{Z}/2w_i\mathbb{Z}, & n\equiv 3 \text{ mod } 8 \\
    \mathbb{Z}^{r_1+r_2}\bigoplus\mathbb{Z}/\frac{1}{2}w_i\mathbb{Z}, & n\equiv 5 \text{ mod } 8\\
    \mathbb{Z}^{r_2}\bigoplus\mathbb{Z}/w_i\mathbb{Z}, & n\equiv 7 \text{ mod } 8
\end{cases}
$$
where \(r_1\) \((r_2)\) is the number of real (complex) places of \(F\), \(i=(n+1)/2\), and
\begin{equation}\label{exp}
    v_p(w_i)=\text{max}\{\nu\text{ }|\text{ Gal}(F(\zeta_{p^\nu})/F)\text{ has exponent dividing }i\},
\end{equation}
 \(\zeta_r\) is a primitive \(r^{th}\) root of unity, and \(v_p\) is the \(p\)-adic valuation associated to the rational prime \(p\).

\begin{proposition}\label{qgivesk}
    If \(F_1,F_2\) are arithmetically equivalent number fields, then for odd \(n\geq 3\), we have \(K_n(F_1)\simeq K_n(F_2)\).  
\end{proposition}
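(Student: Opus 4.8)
The plan is to show that arithmetically equivalent number fields $F_1,F_2$ have identical values for every quantity appearing in the explicit formula for $K_n(F)$ with $n$ odd, $n\geq 3$. Inspecting that formula, the isomorphism type of $K_n(F)$ is completely determined by three pieces of data: the residue $n \bmod 8$ (which is fixed once $n$ is given), the signature $(r_1,r_2)$ of $F$, and the integers $w_i$ defined via the exponent condition \eqref{exp}. Since $n$ is the same for both fields, it suffices to prove that $F_1$ and $F_2$ share the same signature and the same $w_i$. The signature agreement is immediate from Theorem \ref{PP}(2), which I would simply cite: arithmetically equivalent fields have the same number of real and complex places.

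The heart of the matter is therefore to verify that $v_p(w_i)$ agrees for $F_1$ and $F_2$ at every rational prime $p$, where by \eqref{exp} this valuation is the largest $\nu$ such that $\Gal(F(\zeta_{p^\nu})/F)$ has exponent dividing $i=(n+1)/2$. The strategy here is to reduce this purely local cyclotomic datum to splitting behavior of primes, so that arithmetic equivalence (characterized by equality of splitting types $S_{F_1}(p)=S_{F_2}(p)$ for almost all $p$) can be brought to bear. First I would observe that $\Gal(F(\zeta_{p^\nu})/F)$ is canonically a subgroup of $\Gal(\mathbb{Q}(\zeta_{p^\nu})/\mathbb{Q})\simeq(\mathbb{Z}/p^\nu\mathbb{Z})^\times$, namely the image of the absolute Galois group of $F$ acting on $p^\nu$-th roots of unity; equivalently it is determined by the reduction modulo $p^\nu$ of the set of local degrees, i.e. by how the cyclotomic character restricts to $F$. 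The key point is that this subgroup, and hence its exponent, is governed by the splitting of rational primes in the cyclotomic tower, which is an invariant recoverable from the Dedekind zeta function.

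The cleanest route, which I would pursue, is to express $w_i$ through a Galois-theoretic invariant shared by arithmetically equivalent fields. Concretely, a standard fact (due to Schinzel, and used in the study of arithmetic equivalence) is that $\Gal(F(\zeta_{p^\nu})/F)$, viewed inside $(\mathbb{Z}/p^\nu\mathbb{Z})^\times$, is equal to the subgroup generated by the residue degrees $f$ appearing in $S_F(q) \bmod p^\nu$ as $q$ ranges over unramified primes—more precisely, by Chebotarev the character values $\zeta_{p^\nu}\mapsto\zeta_{p^\nu}^{N(\mathfrak{q})}$ for primes $\mathfrak{q}$ of $F$ reproduce exactly the elements of this subgroup. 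Since arithmetic equivalence gives $\zeta_{F_1}=\zeta_{F_2}$, and the splitting data in $F(\zeta_{p^\nu})$ is encoded by the zeta function of $F$ twisted by Dirichlet characters mod $p^\nu$—all of which are recoverable from $\zeta_F$ via the factorization $\zeta_{F(\zeta_{p^\nu})}=\prod_\chi L_F(s,\chi)$—the subgroup $\Gal(F(\zeta_{p^\nu})/F)$ depends only on $\zeta_F$. Its exponent then determines $v_p(w_i)$, so $w_i(F_1)=w_i(F_2)$ for all $i$, completing the argument.

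The step I expect to be the main obstacle is this last reduction: rigorously showing that the \emph{exponent} of $\Gal(F(\zeta_{p^\nu})/F)$ is a zeta-function invariant. One must be careful that equality of Dedekind zeta functions controls not just which primes split but the full decomposition group structure in the abelian extension $F(\zeta_{p^\nu})/F$; the clean way to see this is that arithmetically equivalent fields have the same Galois closure $\widetilde{K}/\mathbb{Q}$ with group $G$ and conjugate-equivalent subgroups $G_{F_1}, G_{F_2}$ (Gassmann equivalence), so that $\Gal(F_j(\zeta_{p^\nu})/F_j)$ is identified with the image of $G_{F_j}$ under the cyclotomic character of $\Gal(\widetilde{K}(\zeta_{p^\nu})/\mathbb{Q})$, and Gassmann equivalence forces these images (hence their exponents) to coincide. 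I would fall back on this Gassmann-triple description if the purely analytic route proves delicate, since it makes the exponent computation transparently a statement about the common group $G$ rather than about $L$-values.
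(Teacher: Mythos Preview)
Your overall strategy is sound---reduce to showing the signature and the $w_i$ agree---and both routes you sketch can be made to work. But you are making this far harder than necessary, and the step you yourself flag as the obstacle (why Gassmann equivalence forces the images under the cyclotomic character to coincide) is left as an assertion rather than an argument.

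The paper dispatches the $w_i$ in one line via a fact you never invoke: for any root of unity $\mu$, the restriction isomorphism gives
\[
\Gal(F_i(\mu)/F_i)\;\simeq\;\Gal\bigl(\mathbb{Q}(\mu)\,/\,F_i\cap\mathbb{Q}(\mu)\bigr),
\]
so the entire Galois group (not just its exponent) is determined by the intersection $F_i\cap\mathbb{Q}(\mu)$. Since $\mathbb{Q}(\mu)/\mathbb{Q}$ is abelian, $F_i\cap\mathbb{Q}(\mu)$ is Galois over $\mathbb{Q}$ and hence lies in the maximal Galois subextension of $F_i/\mathbb{Q}$; by Theorem~\ref{PP}(1) this maximal Galois subextension is the same for $F_1$ and $F_2$, so $F_1\cap\mathbb{Q}(\mu)=F_2\cap\mathbb{Q}(\mu)$ and the Galois groups---hence their exponents, hence each $v_p(w_i)$---coincide. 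No splitting-type analysis, no $L$-series factorizations, no Chebotarev.

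Your fallback via the cyclotomic character is really the same idea in disguise: the image of $G_{F_j}$ under an abelian character of $G$ is governed by $G_{F_j}\cdot[G,G]$, and Gassmann equivalence forces $G_{F_1}\cdot[G,G]=G_{F_2}\cdot[G,G]$ (equivalently, the same one-dimensional constituents appear in $\Ind_{G_{F_j}}^G\mathbf{1}$). That is exactly the group-theoretic content of ``same maximal Galois subextension''. So your fallback is correct once this is spelled out, but the paper's formulation via $F_i\cap\mathbb{Q}(\mu)$ is cleaner and avoids the detour through splitting data entirely.
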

\begin{proof}
Since the \(F_i\) are arithmetically equivalent, we know they have the same signature.  Furthermore, given a primitive root of unity \(\mu\), we know that \(F_1\cap\mathbb{Q}(\mu)=F_2\cap\mathbb{Q}(\mu)\), so \(\text{Gal}(F_1(\mu)/F_1)\simeq\text{Gal}(F_2(\mu)/F_2)\), since \(\text{Gal}(F_i(\mu)/F_i)\simeq\text{Gal}(\mathbb{Q}(\mu)/F_i\cap\mathbb{Q}(\mu))\).
\end{proof}

\subsection{Diagrams in (Co)Homology and Corresponding Abelian Covers}\hfill\\\\
\indent We say that manifolds \(M_1,M_2\) are \textbf{integrally equivalent} if there are coverings \(M_1,M_2\rightarrow M\) with normal closure \(N\rightarrow M\) a \(G\)-covering, where \((G,G_1,G_2)\) is an integral Gassmann triple, and \(M_i\) is the subcover corresponding to \(G_i\). Arapura et. al. construct non-isometric integrally equivalent closed hyperbolic manifolds \cite{arapura_integral_2019}.  Their results imply that for integrally equivalent manifolds \(M_1,M_2\) with contractible universal cover, there are isomorphisms in cohomology \(H^*(M_1)\simeq H^*(M_2)\) natural with respect to restriction (corestriction) from (to) \(H^*(M)\).  These already suggest a correspondence of the abelian covers when the manifolds are closed and orientable, by Poincare duality.  We show that there are similar isomorphisms in homology and define a correspondence of the abelian covers of integrally equivalent manifolds \(M_1\) and \(M_2\), as follows.  Covers \(M_i'\rightarrow M_i\) \textbf{correspond} if \(U_i\subset H_1(M_i)\), \(i=1,2\) are finite index subgroups such that the isomorphism \(H_1(M_1)\simeq H_1(M_2)\) restricts to an isomorphism \(U_1\simeq U_2\), and \(\pi_1(M_i')\) is the preimage of \(U_i\) under the projection \(\pi_1(M_i)\rightarrow H_1(M_i)\). The diagrams in homology and cohomology are obtained using group (co)homology arguments \footnotemark\footnotetext{We pursue group (co)homological relations afforded by integral Gassmann triples insofar as they elucidate our geometric correspondence, but there is much more to be said.  Homological relations in the spirit of Stallings' Theorem and applications thereof are explored in \cite{golich_diamonds_2023}.} \footnotemark\footnotetext{A conceivable approach to our study of corresponding abelian extensions would be to lead with the group (co)homological results from Section \ref{homology} and then define our correspondence using the isomorphism in first homology from  Corollary \ref{blemma}, with \((\Gamma, \Gamma_1,\Gamma_2)\) an integral Gassmann triple of absolute Galois groups of number fields.  As we will see, the adelic approach taken in Section \ref{cae} allows us to easily circumvent the coprimality assumptions needed in Theorem \ref{gthm} (c.f. Theorem \ref{galoisclosure}).} in Section \ref{homology}, and the more geometrically flavored Theorem \ref{g1} is obtained as a quick corollary.
\begin{theorem}\label{g1}
    If \(M\) is a manifold with contractible universal cover, and \(M_1,M_2\rightarrow M\) are integrally equivalent degree \(d\) covers, then corresponding abelian covers \(M_i'\rightarrow M_i\), \(i=1,2\) of degree \(d'\) have the same normal closure over \(M\) when \((d,d')=1\).
\end{theorem}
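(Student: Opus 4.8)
The plan is to translate everything into group theory via asphericity, reduce the ``normal closure over \(M\)'' to a statement about normal cores in \(\pi:=\pi_1(M)\), and then feed in the integral Gassmann structure through the natural isomorphism in first homology supplied by the diagrams of Section \ref{homology}. Since \(M\) has contractible universal cover, every connected cover of \(M\) is again a \(K(\cdot,1)\), is classified by a subgroup of \(\pi\), and the Galois closure over \(M\) of the cover attached to \(W\subseteq\pi\) is the cover attached to the normal core \(\bigcap_{g\in\pi}gWg^{-1}\). Writing \(H_i=\pi_1(M_i)\), \(K=\pi_1(N)=\bigcap_{g}gH_ig^{-1}\), \(G=\pi/K\) and \(G_i=H_i/K\), the hypotheses say \([G:G_i]=d\) and that \((G,G_1,G_2)\) is an integral Gassmann triple, while \(V_i:=\pi_1(M_i')\) is the preimage of \(U_i\) under \(H_i\twoheadrightarrow H_1(M_i)\), so \(V_i\trianglelefteq H_i\) with \(H_i/V_i\cong H_1(M_i)/U_i\) abelian of order \(d'\) and \([H_i,H_i]\subseteq V_i\). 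The goal becomes the purely group-theoretic equality \(\bigcap_{g}gV_1g^{-1}=\bigcap_{g}gV_2g^{-1}\) in \(\pi\).

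Next I would locate the closure. Since \([K,K]\subseteq[H_i,H_i]\subseteq V_i\) and \([K,K]\trianglelefteq\pi\), the core \(C_i:=\bigcap_{g}gV_ig^{-1}\) satisfies \([K,K]\subseteq C_i\subseteq K\); hence the closure \(\widetilde{M_i'}\) lies over \(N\) and \(\widetilde{M_i'}/N\) is abelian. Working in the \(\mathbb{Z}G\)-module \(A:=H_1(N)=K^{ab}\) with \(G\) acting by conjugation, set \(B_i:=\psi_i^{-1}(U_i)\subseteq A\), where \(\psi_i\co A\to H_1(M_i)\) is induced by \(K\hookrightarrow H_i\). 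A short check using \(K\trianglelefteq\pi\) gives \(C_i/[K,K]=\bigcap_{g\in G}g\cdot B_i=:\mathrm{core}_G(B_i)\), so \(\Gal(\widetilde{M_i'}/N)\cong A/\mathrm{core}_G(B_i)\) and it suffices to prove \(\mathrm{core}_G(B_1)=\mathrm{core}_G(B_2)\) inside the \emph{single} module \(A\). I would record three facts: \(\psi_i\) kills \(I_{G_i}A:=\langle(g-1)a:g\in G_i\rangle\), so \(B_i\supseteq I_{G_i}A\) and \(A/B_i\) is a trivial \(G_i\)-module; lifting \(h^{d'}\in V_i\) for \(h\in K\) shows \(d'A\subseteq B_i\); and \(\psi_i\) embeds \(A/B_i\hookrightarrow H_1(M_i)/U_i\), so \(A/B_i\) is killed by \(d'\). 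In particular \(d'A\subseteq\mathrm{core}_G(B_i)\), and the whole comparison takes place modulo \(d'\).

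Then I would make the comparison functorial. The quotient \(q_i\co A\to A/B_i\) is a \(G_i\)-map to a trivial module, and its adjunct is the \(G\)-map \(A\to\CoInd_{G_i}^{G}(A/B_i)\cong\Hom_{\mathbb{Z}}(\mathbb{Z}[G/G_i],A/B_i)\), \(a\mapsto(gG_i\mapsto\overline{g^{-1}a})\), whose kernel is exactly \(\mathrm{core}_G(B_i)\). The correspondence \(U_1\simeq U_2\), together with the naturality of \(H_1(M_1)\simeq H_1(M_2)\) with respect to the maps \(\psi_i\) from \(H_1(N)\), identifies \(A/B_1\cong A/B_2\) compatibly with the \(q_i\); and the defining isomorphism \(\mathbb{Z}[G/G_1]\simeq\mathbb{Z}[G/G_2]\) of the integral Gassmann triple identifies the two coinduced modules \(G\)-equivariantly. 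Matching the two adjuncts then forces \(\mathrm{core}_G(B_1)=\mathrm{core}_G(B_2)\), whence \(C_1=C_2\) and the two Galois closures coincide.

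The hard part is precisely the compatibility asserted in the last step. The maps \(\psi_i\) factor through the \emph{different} coinvariant groups \(A_{G_i}=A/I_{G_i}A\), and the Gassmann isomorphism \(A_{G_1}\cong A_{G_2}\), while natural in the module \(A\), does not commute with the two projections \(A\to A_{G_i}\); so identifying \(q_1,q_2\) and the coinduced modules is not formal. This is exactly where \((d,d')=1\) enters: since every module in sight is \(d'\)-torsion, I would base-change the permutation-module isomorphism to \(\mathbb{Z}/d'\), and coprimality of \(d'\) with \(d=[G:G_i]\) lets me split the augmentation \((\mathbb{Z}/d')[G/G_i]\to\mathbb{Z}/d'\) by \(\tfrac1d\sum\), isolating the summand on which \(\psi_i\), \(U_i\) and the Gassmann isomorphism are pinned down compatibly — and killing the transgression (\(H_2(G_i)\)) contributions to \(\ker\psi_i\) that would otherwise obstruct \(A/B_1\cong A/B_2\). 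I expect this module-theoretic reconciliation, rather than any geometry, to be the crux; the footnote's adelic computation (Theorem \ref{galoisclosure}) is what removes the coprimality hypothesis in the number-field setting.
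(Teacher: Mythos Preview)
Your reduction to group theory and to equality of normal cores in \(\pi_1(M)\) is correct and is exactly how the paper proceeds; the paper then simply invokes Theorem~\ref{gthm}. Your identification of the crux---that the corestrictions \(\psi_i\co H_1(N)\to H_1(M_i)\) must intertwine \(\varphi\), and that this is where \((d,d')=1\) enters---is also right: this compatibility is precisely Lemma~\ref{modq}, and your proposed proof via splitting the augmentation of \((\mathbb{Z}/d')[G/G_i]\) by \(\tfrac{1}{d}\sum\) is essentially the double-coset computation carried out there.

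Where you diverge from the paper is in the packaging, and here you make your own life harder. The coinduction step is a detour: once Lemma~\ref{modq} (with \(\mathbb{Z}/d'\) coefficients) gives \(\varphi\circ\bar\psi_1=\bar\psi_2\), your own observations \(d'A\subseteq B_i\) and \(d'H_1(M_i)\subseteq U_i\) immediately force \(B_1=B_2\) as \emph{subgroups of the same} \(A\), whence \(\mathrm{core}_G(B_1)=\mathrm{core}_G(B_2)\) trivially---no adjuncts or matching of coinduced modules required. (In fact, matching the adjuncts through the Gassmann isomorphism \(\mathbb{Z}[G/G_1]\cong\mathbb{Z}[G/G_2]\), which is \emph{not} induced by a bijection of cosets, is delicate and does not obviously go through as you state it.) The paper's Theorem~\ref{gthm} takes a yet more direct route: rather than descending into \(H_1(N)\), it tensors the diagram of Lemma~\ref{modq} with the finite quotient \(A\cong\Gamma_i/\Gamma_i'\) and reads off that the composite \(N\hookrightarrow\Gamma_i\to A\otimes A\) is independent of \(i\); its kernel is \(N\cap\Gamma_i'\), so \(N\cap\Gamma_1'=N\cap\Gamma_2'\) and the normal cores agree. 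This sidesteps the five-term sequence and the \(H_2(G_i)\) obstruction you worried about.
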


\subsection*{Acknowledgements}
I would like to thank my advisor Ben McReynolds for all his help, and I would like to thank Daniel Le for his helpful comments and a clarifying conversation about algebraic tori.  I would also like to thank Milana Golich and Justin Katz for conversation on this work.  Thanks also to Zachary Selk and Dustin Lee Enyeart for helpful comments that improved the quality of exposition.

\section{Solomatin's Theorem}\label{sol}
\indent This section is devoted to proving the following theorem.  We take liberties with certain details of the exposition and proof but mostly follow \cite{solomatin_note_2019}.  Given a group \(C\), we use \(\check{C}\) to denote the character group of homomorphisms \(C\rightarrow \mathbb{C^\times}\).
\begin{theorem}\label{solthm}
    \cite{solomatin_note_2019} If \(Z_F\) is the set of Dedekind zeta functions of finite abelian extensions of a number field \(F\), then \(Z_{K_1}=Z_{K_2}\) if and only if \(K_1\simeq K_2\).
\end{theorem}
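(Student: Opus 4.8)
The plan is to prove both directions, with the nontrivial direction being that $Z_{K_1}=Z_{K_2}$ forces $K_1\simeq K_2$. The reverse implication is immediate: an isomorphism $K_1\simeq K_2$ induces a bijection between their finite abelian extensions that preserves Dedekind zeta functions, so $Z_{K_1}=Z_{K_2}$. For the forward direction I would leverage Theorem \ref{dschar} of Cornelissen et.~al., which guarantees, for each number field $K$ and each integer $m\geq 3$, a character $\chi$ of $\Gamma_K^{ab}$ of order $m$ that is \emph{arithmetically rigid}: if some number field $F$ carries a character $\chi'$ with $L_K(\chi)=L_F(\chi')$, then $F\simeq K$. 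The strategy is to manufacture this distinguished $L$-function from the zeta-function data alone.

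The key bridge is the factorization of the Dedekind zeta function of an abelian extension into Artin (Hecke) $L$-functions of the characters of its Galois group. If $L/K$ is a finite abelian extension with group $\Gamma=\Gal(L/K)$, then via class field theory $\Gamma$ is a quotient of $\Gamma_K^{ab}$, and one has
\begin{equation*}
\zeta_L(s)=\prod_{\chi\in\check{\Gamma}}L_K(\chi,s),
\end{equation*}
where the product runs over the characters of $\Gamma$ inflated to $\Gamma_K^{ab}$. Thus the set $Z_{K}$ of zeta functions encodes, as factors, the $L$-functions $L_K(\chi)$ for every character $\chi$ of finite order on $\Gamma_K^{ab}$. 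The first main step is to argue that from the collection of zeta functions $Z_K$ one can recover the collection of individual $L$-functions $\{L_K(\chi)\}$, at least up to the ambiguities of reordering and of which characters generate a given cyclic quotient. I would isolate the character $\chi$ supplied by Theorem \ref{dschar} for a fixed choice of $m$ (say $m=3$), realized in the cyclic extension $L/K$ it cuts out, and express $L_K(\chi)$ in terms of $\zeta_L$ and the zeta functions of the proper subextensions of $L$.

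Carrying this out, for the cyclic extension $L/K$ of prime degree $m$ cut out by $\chi$, the factorization reads $\zeta_L=\zeta_K\cdot\prod_{j=1}^{m-1}L_K(\chi^j)$, and for $m$ prime the nontrivial factors $L_K(\chi),\dots,L_K(\chi^{m-1})$ are the Galois conjugates of a single primitive Hecke $L$-function. Since $\zeta_K\in Z_K$ and $\zeta_L\in Z_K$, the quotient $\zeta_L/\zeta_K$ is determined by $Z_K$, and it equals the product of these conjugate $L$-functions. The same construction applied to $K_2$ yields, from $Z_{K_2}=Z_{K_1}$, a matching cyclic extension and a matching product of conjugate $L$-functions. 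Matching the Euler factors prime by prime then lets me extract from $Z_{K_2}$ a character $\chi'$ of $\Gamma_{K_2}^{ab}$ with $L_{K_2}(\chi')=L_{K_1}(\chi)$; invoking the rigidity of $\chi$ from Theorem \ref{dschar} gives $K_2\simeq K_1$. The main obstacle is precisely this extraction: recovering an individual $L_K(\chi)$ from a product of Galois conjugates, and matching the product arising from $K_1$ against a product arising from $K_2$ so as to produce a single character $\chi'$ on the correct base field rather than merely an equality of products. I expect to handle this by working at the level of Euler products and Dirichlet coefficients, using that the order-$m$ character and its field of realization are forced by the analytic behavior of the factors, so that the conjugate-$L$-function ambiguity does not obstruct the appeal to Theorem \ref{dschar}.
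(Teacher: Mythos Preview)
Your overall strategy matches the paper's, and you correctly locate the crux at the extraction step: from $\prod_{j=1}^{m-1}L_{K_1}(\chi^j)=\prod_{\psi\neq 1}L_{K_2}(\psi)$, produce a single $\psi$ with $L_{K_2}(\psi)=L_{K_1}(\chi)$. But the proposal to handle this ``at the level of Euler products and Dirichlet coefficients'' is not a proof, and I do not see how to make it one along those lines. Passing to Artin $L$-functions over $\mathbb{Q}$, the equality of products is equivalent to an isomorphism of the corresponding direct sums of induced representations of the Galois group of a common normal closure. To peel off a single summand and conclude that some induction of a $\psi$ from $K_2$ agrees with the induction of $\chi$ from $K_1$, you need the summands on the $K_1$ side to be \emph{irreducible}; otherwise the irreducible constituents of the induction of $\chi$ may be distributed among several of the inductions on the $K_2$ side, and no single $\psi$ need match. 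For a generic order-$m$ character $\chi$ of $\Gamma_{K_1}^{ab}$ this irreducibility can fail, and comparing Euler factors prime by prime does not get around it.

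The paper fills exactly this gap, and in doing so supplies the missing idea. The rigid character it uses is not an unspecified one from Theorem~\ref{dschar}: it is the character $\chi_{K'}$ attached to a \emph{wreathing extension} $K'$ of $K_1$ by $C=\mathbb{Z}/m\mathbb{Z}$ (Lemma~\ref{dscharacter}), and Lemma~\ref{strongsollemma} shows that for every nontrivial $\varphi\in\check C$ the induction $\Ind_{G'_{K_1}}^{G'}(\chi_\varphi)$ is irreducible, where $G'=\Gal(K'/\mathbb{Q})$ is the wreath product. With irreducibility in hand, the isomorphism of representations forces the $K_2$ side to decompose into the same number of irreducibles, and one of them must equal $\rho_{K_1}(\chi_{K'})$; this yields $L_{K_1}(\chi_{K'})=L_{K_2}(\psi')$ for some $\psi$, and rigidity finishes. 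So the ingredient your plan lacks is precisely the wreathing-extension construction together with its irreducibility lemma; once you import those, your argument becomes the paper's. (A minor omission: you also need $\zeta_{K_1}=\zeta_{K_2}$ before forming the quotients; the paper proves this by a short degree argument just before the proof.)
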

\noindent Note that \(Z_{K_1}=Z_{K_2}\) implies \(\zeta_{K_1}=\zeta_{K_2}\).  Indeed, there is an abelian extension \(L_2/K_2\) such that \(\zeta_{L_2}=\zeta_{K_1}\), but then \(\zeta_{K_2}=\zeta_{L_1}\) for some abelian extension \(L_1/K_1\), so that $$[L_2:\mathbb{Q}]=[K_1:\mathbb{Q}]\leq [L_1:\mathbb{Q}]=[K_2:\mathbb{Q}],$$ and therefore \(L_2=K_2\). Before proving Theorem \ref{solthm}, we review preliminary results and set terminology.\\\\
\noindent Given a finite cyclic group \(C\) and a Galois extension \(K/\mathbb{Q}\) of number fields with subfield \(F\subset K\), letting \(G=\Gal(K/\mathbb{Q})\), we call \(K'\) a \textbf{wreathing extension of} \(K/F/\mathbb{Q}\) \textbf{by} \(C\) if the following four conditions are satisfied. 
\begin{enumerate}
    \item \(K\subset K'\)
    \item \(K'/\mathbb{Q}\) is Galois with group \(\Gal(K'/\mathbb{Q})=C[G/G_F]\rtimes G\)
    \item \(\Gal(K'/F)=C[G/G_F]\rtimes G_F\)
    \item \(\Gal(K'/K)=C^{[G:G_F]}\)
\end{enumerate} 
where the action of the semi-direct product is permutation of cosets according to left multiplication.  When knowledge of \(K\) is either implicit or unnecessary, we refer to \(K'\) as a wreathing extension of \(F\) by \(C\).
\begin{theorem}\cite{cornelissen_characterization_2018}
Given a finite Galois extension \(K/\mathbb{Q}\) of number fields, a subfield \(F\subset K\), and a finite cyclic group \(C\), there is a wreathing extension of \(K/F/\mathbb{Q}\) by \(C\).
\end{theorem}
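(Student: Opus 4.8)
The plan is to realize the wreath product $W=C[G/G_F]\rtimes G$ as $\Gal(K'/\mathbb{Q})$ by taking $K'$ to be the compositum of $K$ with the $G$-conjugates of a single, carefully ramified cyclic extension of $F$. Put $m=[G:G_F]=[F:\mathbb{Q}]$, fix coset representatives $\sigma_1=e,\sigma_2,\dots,\sigma_m$ for $G_F$ in $G$, and lift each to an automorphism $\tilde\sigma_i$ of $\bar{\mathbb{Q}}$ restricting to $\sigma_i$ on $K$. The construction proceeds from a cyclic extension $L/F$ with $\Gal(L/F)\simeq C$: set $L_i:=\tilde\sigma_i(L)$ and $K'=K\cdot L_1\cdots L_m$. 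Everything rests on arranging the $L_i$ to be linearly disjoint over $K$, and I treat that as the main obstacle below.

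Granting the construction of $L$, the structural part is essentially formal. Since $L/F$ is Galois, $\tilde\sigma_i(L)$ depends only on the coset $\sigma_iG_F$, so the absolute Galois group of $\mathbb{Q}$ permutes $\{L_1,\dots,L_m\}$; with $K/\mathbb{Q}$ Galois this makes $K'/\mathbb{Q}$ Galois. Restriction to $K$ gives a surjection $\Gal(K'/\mathbb{Q})\to G$, and any $\tau$ is determined by its image $\sigma\in G$ together with its action on each $L_i$. As $\tau$ carries $L_i$ to $L_j$ precisely when $\sigma\sigma_iG_F=\sigma_jG_F$ and acts through a copy of $\Gal(L/F)=C$ on each factor, recording this data is the standard embedding $\Gal(K'/\mathbb{Q})\hookrightarrow W$ of the Galois group of a compositum of conjugates into a permutational wreath product (the action being left multiplication on cosets, as required). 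This injection is an isomorphism exactly when $[K':K]=|C|^m$, and in that case the wreath-product identification is obtained \emph{for free}, delivering the split structure of condition (2) rather than an abstract extension. Conditions (1), (3), (4) then follow by reading off $\Gal(K'/K)=C^{[G:G_F]}$ as the kernel and $\Gal(K'/F)=C[G/G_F]\rtimes G_F$ as the preimage of $G_F$.

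The hard part is thus producing $L/F$ whose conjugates achieve full linear disjointness, $[K':K]=|C|^m$; the tool is prescribed ramification. Choose a rational prime $p$ that splits completely in $K$ with $p\equiv 1\pmod{|C|}$. Then $p$ is unramified in $K$ and splits in $F$ into $m$ distinct degree-one primes $\mathfrak p_1,\dots,\mathfrak p_m$ with $\mathfrak p_i=\sigma_i(\mathfrak p_1)$, and since $p\nmid|C|$ with $|C|\mid p-1$, the completion $F_{\mathfrak p_1}$ admits a cyclic, totally tamely ramified extension of degree $|C|$. By Grunwald--Wang over $F$ (with the $2$-adic conditions arranged to avoid Wang's exceptional case, which the freedom in choosing $p$ permits), there is a global cyclic $L/F$ of degree $|C|$ realizing this extension at $\mathfrak p_1$ and split at $\mathfrak p_2,\dots,\mathfrak p_m$. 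Applying $\sigma_i$, the conjugate $L_i$ is totally ramified at $\mathfrak p_i$ and unramified at the other $\mathfrak p_j$.

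Finally I would extract linear disjointness from this ramification independence. Because $K/\sigma_i(F)$ is unramified at $\mathfrak p_i$ while $L_i/\sigma_i(F)$ is totally ramified there, we get $L_i\cap K=\sigma_i(F)$, so $L_iK/K$ is a genuine degree-$|C|$ extension that is totally ramified above $\mathfrak p_i$ and unramified above $\mathfrak p_j$ for $j\neq i$. Writing $D=\Gal(L_1K\cdots L_mK/K)\hookrightarrow\prod_i\Gal(L_iK/K)\simeq C^m$, the inertia subgroup of $D$ at a prime above $\mathfrak p_i$ projects onto the $i$-th factor (total ramification) and trivially onto the others ($K$ and the remaining $L_j$ being unramified there), hence equals the $i$-th coordinate copy of $C$. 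Since $D$ contains every coordinate factor, $D=C^m$, giving $[K':K]=|C|^m$ and completing the identification $\Gal(K'/\mathbb{Q})\simeq W$. The single delicate point remains steering the Grunwald--Wang step clear of Wang's special case when $8\mid|C|$, which the latitude in the choice of $p$ and $\mathfrak p_1$ accommodates.
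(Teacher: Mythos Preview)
The paper does not supply its own proof of this statement; it is quoted from \cite{cornelissen_characterization_2018} and used as a black box. Your argument is essentially the standard construction (and indeed the one in the cited reference): realize $K'$ as the compositum of $K$ with the Galois conjugates of a cyclic $L/F$ whose ramification over a single completely split rational prime $p$ is concentrated at one place of $F$, and then read off $\Gal(K'/\mathbb{Q})\simeq C[G/G_F]\rtimes G$ from the disjointness of the ramification loci of the $L_iK/K$.

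Two minor points of hygiene. The line $\mathfrak p_i=\sigma_i(\mathfrak p_1)$ is not well-posed unless $F/\mathbb{Q}$ is Galois, since $\sigma_i(\mathfrak p_1)$ is a prime of $\sigma_i(F)$ rather than of $F$; the argument runs cleanly if you track primes of $K$ throughout, as you effectively do in the final paragraph. And the phrase ``which the freedom in choosing $p$ permits'' for dodging Wang's special case is slightly off target: the special case is a purely $2$-adic obstruction, unaffected by the choice of the odd prime $p$. What you actually need is precisely what you hint at---add the primes of $F$ above $2$ to the Grunwald--Wang data with, say, split local conditions---and this is orthogonal to the choice of $p$. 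With these cosmetic fixes the argument is complete.
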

\noindent Given a wreathing extension \(K'\) of \(K/F/\mathbb{Q}\) by \(C\), set \(G'=\Gal(K'/\mathbb{Q})\), so that \(G'_F=C[G/G_F]\rtimes G_F\), and arrange so that the first coordinate of \(C[G/G_F]\) is fixed by \(G_F\) (i.e. the first coordinate corresponds to the coset \(G_F\in G/G_F\)).  Letting \(n=[G:G_F]\), to \(K'\) we associate the morphism
$$
\chi_{K'}: G'_F\rightarrow C:(c_1,...,c_n,g)\mapsto c_1.
$$
If \(C=\mathbb{Z}/m\mathbb{Z}\), a homomorphism \(\varphi: C\rightarrow\{e^{2\pi ik/m}\}_{k=0}^{m-1}\subset\mathbb{C}^\times\) determines a one-dimensional representation \(\varphi\circ\chi_{K'}\) of \(G'_F\), which we denote \(\chi_\varphi\).  Note that each \(\chi_\varphi\) gives a Dirichlet character of \(\Gamma_F\) via the composition 
$$
\Gamma_F\rightarrow G_F'/\ker(\chi_\varphi)\xrightarrow{\overline{\chi_\varphi}}\mathbb{C}^\times,
$$
which we also refer to as \(\chi_\varphi\).  We will further abuse notation and identify \(\chi_{K'}\) with \(\chi_{\varphi'}\), where \(\varphi'(k)=e^{2\pi i k/m}\). Two lemmas are needed for the proof of the theorem.
\begin{lemma}\label{dscharacter}
    If \(K'\) is a wreathing extension of a number field \(F\) by a finite cyclic group of order greater than \(2\), and \(F'\) is a number field admitting a Dirichlet character \(\chi'\) satisfying \(L_{F'}(\chi')=L_F(\chi_{K'})\), then \(F\simeq F'\).
\end{lemma}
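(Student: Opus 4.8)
The plan is to transport the equality of $L$-functions into an isomorphism of Artin representations and then exploit the rigid internal structure of the wreathing extension. First I would invoke the inductivity of Artin $L$-functions to write $L_F(\chi_{K'})=L_{\mathbb{Q}}(\Ind_{\Gamma_F}^{\Gamma_{\mathbb{Q}}}\chi_{K'})$ and $L_{F'}(\chi')=L_{\mathbb{Q}}(\Ind_{\Gamma_{F'}}^{\Gamma_{\mathbb{Q}}}\chi')$. Setting $\rho=\Ind_{\Gamma_F}^{\Gamma_{\mathbb{Q}}}\chi_{K'}$ and $\rho'=\Ind_{\Gamma_{F'}}^{\Gamma_{\mathbb{Q}}}\chi'$, the hypothesis gives $L_{\mathbb{Q}}(\rho)=L_{\mathbb{Q}}(\rho')$; comparing Euler factors away from the ramified primes shows $\rho$ and $\rho'$ assign the same characteristic polynomial to every Frobenius class, so by Chebotarev their traces agree and $\rho\simeq\rho'$. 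Since $\chi_{K'}$ factors through $G'_F=C[G/G_F]\rtimes G_F$, the representation $\rho$ factors through $G'=\Gal(K'/\mathbb{Q})$; hence so does $\rho'$, and I may regard both as representations of the finite group $G'$, with $\rho=\Ind_{G'_F}^{G'}\chi_{\varphi'}$ and $\rho'=\Ind_{B}^{G'}\beta$, where $B$ is the image of $\Gamma_{F'}$ in $G'$ (fixing $F'\cap K'$) and $\beta$ is the character of $B$ induced by $\chi'$.

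The crux is to restrict to the base group $W=C[G/G_F]=\Gal(K'/K)$, which is normal in $G'$ and contained in $G'_F$. Writing $\varphi_j$ for the linear character of $W\simeq C^{[G:G_F]}$ that equals $\varphi'$ on the $j$-th coordinate and is trivial elsewhere, Mackey's formula over the double cosets $W\backslash G'/G'_F\simeq G/G_F$ yields $\Res_W\rho=\bigoplus_{j}\varphi_j$, a multiplicity-free sum of the distinct coordinate characters. I would then analyze $\Res_W\rho'=\Res_W\Ind_B^{G'}\beta$ by the same formula: each double-coset summand is $\Ind_{W\cap gBg^{-1}}^{W}({}^g\beta)$, which is a full coset of extensions of a character of $W\cap gBg^{-1}$, i.e.\ a coset of the subgroup $(W/(W\cap gBg^{-1}))^{\wedge}\subseteq\widehat{W}$. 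Matching this against $\bigoplus_j\varphi_j$ is exactly where the hypothesis $|C|>2$ enters: the set $\{\varphi_1,\dots,\varphi_n\}$ contains no coset of a nontrivial subgroup of $\widehat{W}$ when $m>2$, since $\varphi_i+\varphi_j$ is supported on two coordinates and cannot equal any single $\varphi_k$, and for $m>2$ one cannot escape this by collapsing coefficients modulo $m$. Hence each summand is already a single character, forcing $W\cap gBg^{-1}=W$ and therefore $W\subseteq B$. I expect this step—pinning down that the coordinate characters force $W\subseteq B$, which is precisely where $|C|>2$ is indispensable—to be the main obstacle.

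With $W\subseteq B$ I can write $B=W\rtimes\bar B$, where $\bar B$ is the image of $B$ in $G$, and $[G:\bar B]=[G':B]=\dim\rho=[G:G_F]$. The restriction $\beta|_W$ is then one of the $\varphi_j$, say $\varphi_{j_0}$, and for $\beta$ to be a genuine character of $B$ the character $\varphi_{j_0}$ must be $\bar B$-invariant; since $\bar B$ acts by permuting coordinates and $|\bar B|=|G_F|$, this forces $\bar B$ to be exactly the stabilizer of the $j_0$-th coordinate, i.e.\ a conjugate $g_{j_0}G_Fg_{j_0}^{-1}$. Conjugating by $g_{j_0}$ shows $B$ is $G'$-conjugate to $G'_F$, so the fixed field $F'\cap K'=(K')^{B}$ is a conjugate of $(K')^{G'_F}=F$ and in particular $F'\cap K'\simeq F$; comparing degrees, $[F':\mathbb{Q}]=\dim\rho'=[G:G_F]=[F'\cap K':\mathbb{Q}]$ gives $F'=F'\cap K'$, whence $F'\simeq F$. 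The remaining bookkeeping with fixed fields and degrees is routine. One could alternatively short-circuit the argument by identifying $\chi_{K'}$ with the reconstruction character furnished by Theorem \ref{dschar}, but the representation-theoretic route makes transparent both the mechanism and the essential role of the hypothesis $|C|>2$.
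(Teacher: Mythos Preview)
Your argument is correct and considerably more detailed than what the paper offers: the paper's entire proof is the single line ``Follows from the proof of Theorem 10.1 in \cite{cornelissen_characterization_2018},'' so you are in effect reconstructing the cited argument rather than diverging from it. The Mackey computation $\Res_W\rho=\bigoplus_j\varphi_j$, the observation that each Mackey summand of $\Res_W\rho'$ is a full coset of $(W/(W\cap gBg^{-1}))^\wedge$ in $\widehat W$, and the combinatorial fact that for $m>2$ the standard basis vectors in $(\mathbb{Z}/m\mathbb{Z})^n$ contain no nontrivial coset together force $W\subseteq B$; the stabilizer argument then pins $\bar B$ to a conjugate of $G_F$. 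All of this is sound, and your remark that $m=2$ genuinely fails (e.g.\ $\{e_1,e_2\}$ is a coset of $\langle e_1+e_2\rangle$ in $(\mathbb{Z}/2\mathbb{Z})^n$) is a nice sanity check.

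One small point you glide over: to write $\rho'=\Ind_B^{G'}\beta$ with $\beta$ a \emph{character} of $B$, you need $\chi'$ to descend through $\Gamma_{F'}\to\Gamma_{F'}/\Gamma_{K'}$, i.e.\ $\Gamma_{K'}\subseteq\ker\chi'$. This does follow, since $\Gamma_{K'}\subseteq\ker\rho'=\bigcap_g g(\ker\chi')g^{-1}\subseteq\ker\chi'$, and in particular $F'\subseteq K'$ from the outset; your closing degree comparison $[F':\mathbb{Q}]=[F'\cap K':\mathbb{Q}]$ is then redundant (though harmless). Making this explicit would tighten the passage from $\rho\simeq\rho'$ to the finite-group picture.
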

\begin{proof}
    Follows from the proof of Theorem 10.1 in \cite{cornelissen_characterization_2018}.
\end{proof}
\begin{lemma}\label{strongsollemma}
If \(K'\) is a wreathing extension of a number field \(F\) by a finite cyclic group \(C\), and \(\varphi\in\check{C}\) is non-trivial, then the induced representation \(V = \Ind_{G'_F}^{G'}(\chi_\varphi)\) is irreducible.
\end{lemma}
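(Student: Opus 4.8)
The plan is to invoke Mackey's irreducibility criterion. Since \(\chi_\varphi\) is one-dimensional it is automatically irreducible, so \(\Ind_{G'_F}^{G'}(\chi_\varphi)\) is irreducible if and only if, for every double coset representative \(s\in G'_F\backslash G'/G'_F\) with \(s\notin G'_F\), the two one-dimensional characters \(\chi_\varphi\) and \({}^s\chi_\varphi\) (where \({}^s\chi_\varphi(x)=\chi_\varphi(s^{-1}xs)\)) are distinct on the intersection \(G'_F\cap sG'_Fs^{-1}\). Equivalently, one may compute \(\dim\Hom_{G'}(V,V)\) via Mackey's formula as a sum over double cosets of the numbers \(\dim\Hom_{G'_F\cap sG'_Fs^{-1}}(\Res\chi_\varphi,\Res{}^s\chi_\varphi)\), each of which is \(0\) or \(1\), and show that every term except the one for \(s=e\) vanishes.

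First I would identify the double cosets. Because \(C[G/G_F]\) is a normal subgroup of \(G'\) contained in \(G'_F=C[G/G_F]\rtimes G_F\), any \(s=(\mathbf{d},h)\in G'\) satisfies \(G'_F\,s\,G'_F=G'_F\,(\mathbf{1},h)\,G'_F\), and the \(G\)-component ranges exactly over the ordinary double coset \(G_FhG_F\). Hence \(G'_F\backslash G'/G'_F\) is in bijection with \(G_F\backslash G/G_F\), with representatives \(s_h=(\mathbf{1},h)\); the nontrivial double cosets are those with \(h\notin G_F\). Moreover, since \(C[G/G_F]\) is normal, \(s_hG'_Fs_h^{-1}=C[G/G_F]\rtimes hG_Fh^{-1}\), so the intersection \(G'_F\cap s_hG'_Fs_h^{-1}\) always contains the full normal subgroup \(C[G/G_F]\).

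The heart of the argument is then a restriction to \(C[G/G_F]\), where the computation is transparent. Indexing coordinates by cosets, the restriction of \(\chi_\varphi\) to \(C[G/G_F]\) is \(\mathbf{c}\mapsto\varphi(c_{G_F})\), reading off only the distinguished coordinate at the coset \(G_F\). Conjugation by \(s_h=(\mathbf{1},h)\) permutes coordinates by the action of \(h\) on \(G/G_F\), so the restriction of \({}^{s_h}\chi_\varphi\) to \(C[G/G_F]\) is \(\mathbf{c}\mapsto\varphi(c_{hG_F})\). For \(h\notin G_F\) we have \(hG_F\neq G_F\), so these two characters read off independent coordinates; choosing \(\mathbf{c}\) supported at the coset \(G_F\) with value a generator on which \(\varphi\) is nontrivial—possible precisely because \(\varphi\in\check{C}\) is nontrivial—exhibits an element of \(G'_F\cap s_hG'_Fs_h^{-1}\) where \(\chi_\varphi\) and \({}^{s_h}\chi_\varphi\) disagree. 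Thus the two characters are disjoint for every nontrivial double coset, Mackey's criterion is satisfied, and \(V\) is irreducible.

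I expect the only real obstacle to be bookkeeping: correctly tracking the semidirect-product conjugation and the coordinate-permutation action in order to pin down both the double coset representatives and the intersection \(G'_F\cap s_hG'_Fs_h^{-1}\). Once the restriction to the normal subgroup \(C[G/G_F]\) is in hand, the distinction of the characters—and the essential use of the hypothesis that \(\varphi\) is nontrivial—is immediate.
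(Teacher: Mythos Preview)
Your argument via Mackey's irreducibility criterion is correct, but the paper proceeds differently. Rather than analyzing double cosets, the paper shows directly that $\mathbb{C}G'\cdot v = V$ for every nonzero $v$: after translating so that $v=\sum_i g'_i c_i$ has $c_1\neq 0$, one applies the element $(1,0,\ldots,0,e)\in C[G/G_F]\rtimes G$ and observes that it scales the $g'_1$-component by $\lambda=\varphi(1)\neq 1$ while fixing the others, so $v-(1,0,\ldots,0,e)\cdot v=(1-\lambda)g'_1c_1$ isolates $g'_1$.

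Both arguments rest on the same underlying computation---an element of $C[G/G_F]$ supported only at the distinguished coset $G_F$ acts via $\varphi$ on that coordinate and trivially elsewhere---but they package it differently. The paper's eigenvalue-separation trick is short and self-contained, requiring no outside machinery. Your Mackey approach is more systematic: it makes the double-coset structure $G'_F\backslash G'/G'_F\cong G_F\backslash G/G_F$ explicit and reduces the problem to a transparent comparison of characters on the normal abelian subgroup $C[G/G_F]$, which would generalize more readily to related settings, at the cost of importing Mackey's criterion.
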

\begin{proof}
    We use \(e\) to denote the identity element of \(G\).  Let \(g_1,...,g_n\in G\) be a complete set of representatives for \(G/G_F\), and assume without loss of generality that \(g_1\in G_F\).  Then \(g_i'=(0,g_i)\), \(i=1,...,n\), is a complete set of coset representatives for \(G'/G'_F\), and  \(V\) decomposes as
    $$
    V = \bigoplus\limits_{i=1}^n g'_i\mathbb{C}.
    $$
    We argue that the action of \(\mathbb{C}G'\) on \(V\) is transitive by showing \(g'_1\in\mathbb{C}G'.v\) for \(0\neq v\in V\).  Recall that we arrange so that the first coordinate of \(C[G/G_F]\) is fixed by \(G_F\), and let \(\lambda = \chi_\varphi(1)\). Note that if \(v\neq 0\), there is \(g\in G'\) such that the coefficient of \(g'_1\) in \(g.v\) is non-zero, so it suffices to prove the claim under the assumption \(v=\sum_ig'_ic_i\) with \(c_1\neq 0\).  Under this assumption, let \(v'=(1,0,...,0,e).v\).  Compute \(v' = \lambda g'_1c_1+\sum_{i=2}^{n}g'_ic_i\), so that \((v-v') = (1-\lambda)g'_1c_1\).  Since \(\varphi\) is not trivial, we know \(\lambda\neq 1\), and hence \(g'_1\in \mathbb{C}G'.v\).
\end{proof}

\noindent We may now prove Solomatin's Theorem.  Any properties of Artin \(L\)-functions used below can be found in Lemma 10.2 in \cite{cornelissen_characterization_2018}.
\begin{proof}[\textbf{Proof of Theorem \ref{solthm}}]
    We need only show that \(Z_{K_1}=Z_{K_2}\) implies \(K_1\simeq K_2\), as the converse is obvious.  Let \(K_1\) be a number field and \(K'\) a wreathing extension of \(K_1\) by a cyclic group \(C_1\) of order \(m\geq 3\), and take \(L_1/K_1\) to be the abelian extension given by \(G'_{L_1}=\ker(\chi_{K'})\), where \(G'=\Gal(K'/\mathbb{Q})\).  Observe that \(\Gal(L_1/K_1)\simeq C_1\) via \(\chi_{K'}\). Now suppose \(Z_{K_1}=Z_{K_2}\) for a number field \(K_2\), so that \(\zeta_{K_1}=\zeta_{K_2}\) and \(\zeta_{L_1}=\zeta_{L_2}\) for some abelian extension \(L_2/K_2\). Letting \(1\) denote the trivial representation and \(\rho_{F}=\Ind_{G'_F}^{G'}\),  we know that $$\zeta_{K_i}=L_{K_i}(1)=L_{\mathbb{Q}}(\rho_{K_i}(1)),$$
    so that \(L_{\mathbb{Q}}\left(\rho_{K_1}(1)\right)= L_{\mathbb{Q}}\left(\rho_{K_2}(1)\right)\) and hence 
    \begin{equation}\label{lfuncK}
    \rho_{K_1}(1)\simeq\rho_{K_2}(1).
    \end{equation}
    Similarly, \(\zeta_{L_1}=\zeta_{L_2}\) implies 
    \begin{equation}\label{lfuncL}
    \rho_{L_1}(1)\simeq\rho_{L_2}(1).
    \end{equation}
    Now, \(\rho_{L_i}(1)=\rho_{K_i}(\Lambda_{L_i/K_i})\), where \(\Lambda_{L_i/K_i}\) is the permutation representation given by the left multiplication action of \(G_{K_i}'\) on \(G_{K_i}'/G_{L_i}'\), which factors through the left regular representation of \(\Gal(L_i/K_i)\).  Letting \(C_2=\Gal(L_2/K_2)\), equation \ref{lfuncL} can therefore be rewritten
    $$
    \bigoplus_{\varphi\in\check{C}_1}\rho_{K_1}(\chi_\varphi)\simeq\bigoplus_{\psi\in\check{C}_2}\rho_{K_2}(\psi'),
    $$
    which, along with equation \ref{lfuncK} implies
    \begin{equation}\label{decomp}\bigoplus_{1\neq\varphi\in\check{C}_1}\rho_{K_1}(\chi_\varphi)\simeq\bigoplus_{1\neq\psi\in\check{C}_2}\rho_{K_2}(\psi'),
    \end{equation}
    where \(\psi'\) is the character \(G'_{K_2}\rightarrow\mathbb{C}^\times\) induced by \(\psi\in\check{C_2}\). Now, one has \(|C_2|=|C_1|=m\), so that either side of equation \ref{decomp} has \(m-1\) direct summands, and by Lemma \ref{strongsollemma}, each summand on the left hand side is an irreducible representation, so that the right hand side of equation \ref{decomp} also consists of irreducible representations.  Since \(\rho_{K_1}(\chi_{K'})\) is a summand of the left hand side of equation \ref{decomp}, we conclude there is \(\psi\in\check{C}_2\) such that \(\rho_{K_1}(\chi_{K'})\simeq\rho_{K_2}(\psi')\).  But then 
    $$
    L_{K_1}(\chi_{K'})=L_\mathbb{Q}(\rho_{K_1}(\chi_{K'}))=L_\mathbb{Q}(\rho_{K_2}(\psi'))=L_{K_2}(\psi'),
    $$ 
    so that \(K_1\simeq K_2\), by Lemma \ref{dscharacter}.
\end{proof}

\section{Quasi-split Tori and Idele Norm}\label{tori}

In this section, we prove that taking adelic points of a certain group scheme recovers the idele norm.  This result is likely known to experts, but for the sake of completeness and for lack of a suitable reference, we include a proof.  We write \(K^s\) for the separable closure of a field \(K\), and \(\mathbb{G}_m\) will denote the multiplicative group scheme.  We are interested in relating Weyl restriction of \(\mathbb{G}_m\) to the idele norm.  For further discussion of the lemmas, see Chapter 2 in \cite{platonov_algebraic_1994}.  Given a subset \(S\) of an abelian group, we will use the notation \(\sum S\) to denote the sum \(\sum_{s\in S}s\).  We will also write \(\gamma^g\) in place of \(g^{-1}\gamma g,\) given elements \(\gamma,g\) of a group \(G\).\\

\noindent  Let \(K\) be  a local or global field.  It is well-known that finite-dimensional \(K\)-tori correspond to finitely generated free abelian groups, equipped with an action of \(\mathcal{G}=\text{Gal}(K^s/K)\), via a contravariant equivalence of categories.  In particular, the group-module corresponding to a \(K\)-torus \(T\) is its character group \(\text{Hom}(T_{K^s},\mathbb{G}_m)\).  Since \(T\) is finite-dimensional over \(K\), it splits over a finite Galois extension \(E/K\).  Furthermore, if \(T\) is a so-called \textit{quasi-split} torus and \(G=\text{Gal}(E/K)\), there is a finite \(G\)-set \(A\) and a \(G\)-equivariant isomorphism \(T_{K^s}\simeq\mathbb{G}_m^A\), where the right hand side is a direct product of copies of \(\mathbb{G}_m\), indexed by \(A\) with \(G\)-action given by permutation of coordinates.  This induces an action of \(\mathcal{G}\) on the character group.  The set of quasi-split tori is precisely the set of products of Weyl restrictions of \(\mathbb{G}_m\) (c.f. Chapter 2 in \cite{platonov_algebraic_1994}).  A straightforward adaptation of the proof of Theorem 7.5  in \cite{waterhouse_introduction_1979} yields the following. 
\begin{lemma}\label{chargroup}
    Given \(E/K\) Galois with group \(G\) and subextension \(K'/K\), letting, \(\Omega=G/G_{K'}\) and \(T'=\text{Res}_{K'/K}(\mathbb{G}_m)\), the character group of \(T'\) is \(\mathbb{Z}\Omega\).
\end{lemma}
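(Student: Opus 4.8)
The plan is to establish the $\mathcal{G}$-module isomorphism by directly computing the character group of the Weyl restriction $T' = \Res_{K'/K}(\mathbb{G}_m)$ and identifying it with the permutation module $\mathbb{Z}\Omega$, where $\Omega = G/G_{K'}$. The essential input is the defining universal property (adjunction) of Weyl restriction: for any $K$-algebra $R$, one has $T'(R) = \Res_{K'/K}(\mathbb{G}_m)(R) = \mathbb{G}_m(R \otimes_K K') = (R \otimes_K K')^\times$. Applying this over the separable closure $K^s$ and using that $K' \otimes_K K^s \simeq \prod_{\sigma} K^s$, where $\sigma$ ranges over the $K$-embeddings $K' \hookrightarrow K^s$ (equivalently over the cosets $\Omega = G/G_{K'}$, since these embeddings are permuted simply transitively modulo $G_{K'}$), I would obtain a $\mathcal{G}$-equivariant isomorphism $T'_{K^s} \simeq \mathbb{G}_m^{\Omega}$. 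This exhibits $T'$ as a quasi-split torus with associated $G$-set $\Omega$, matching the general discussion preceding the lemma.

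Having identified $T'_{K^s}$ with the coordinatewise product $\mathbb{G}_m^\Omega$, I would next compute its character group $\Hom(T'_{K^s}, \mathbb{G}_m)$. Since characters of a product of multiplicative groups are spanned freely by the coordinate projections, the character group is free abelian on the index set $\Omega$, i.e. $\mathbb{Z}\Omega$ as an abelian group. The content of the lemma is really in the $\mathcal{G}$-action: I would track how $\mathcal{G}$ permutes the factors $\mathbb{G}_m^\Omega$ (equivalently, how it permutes the embeddings $\sigma \colon K' \hookrightarrow K^s$) and verify that the induced action on the dual basis of characters is precisely the left-multiplication permutation action of $G$ on the cosets $G/G_{K'}$, factoring through $\mathcal{G} \twoheadrightarrow G$. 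This is the step where the contravariance of the character functor matters: one must check that the permutation action on coordinates dualizes to the stated permutation action on $\mathbb{Z}\Omega$ without an inversion, which works because permutation representations are self-dual.

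The main obstacle, as I see it, is bookkeeping rather than conceptual: namely pinning down the bijection between $K$-embeddings of $K'$ into $K^s$ and the cosets $\Omega = G/G_{K'}$, and confirming that the $\mathcal{G}$-action transported along this bijection agrees with the $G$-action by left translation. One has to be careful that $\mathcal{G} = \Gal(K^s/K)$ acts through its quotient $G = \Gal(E/K)$ (using that $T'$ splits over $E$, so the action factors through $G$), and that the stabilizer of the distinguished coordinate is exactly $G_{K'}$. Since the excerpt flags this as a straightforward adaptation of Theorem 7.5 in \cite{waterhouse_introduction_1979}, I would lean on that reference for the adjunction computation of $T'(R)$ and devote the written proof to making the equivariance explicit, which is the only place a sign or direction error could creep in.
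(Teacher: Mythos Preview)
Your proposal is correct and follows exactly the approach the paper intends: the paper gives no proof of its own, stating only that the lemma is a straightforward adaptation of Theorem~7.5 in \cite{waterhouse_introduction_1979}, and your outline (apply the Weil-restriction adjunction, base-change to $K^s$, decompose $K'\otimes_K K^s\simeq\prod_{\Omega}K^s$, and read off the permutation $G$-module structure on characters) is precisely that adaptation. Your attention to the equivariance bookkeeping and the self-duality of permutation modules is appropriate and fills in the details the paper leaves to the reference.
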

\noindent We will need an additional observation before proving the main theorem.  Let \(\alpha=\sum \Omega\), and let \(N: T'\rightarrow\mathbb{G}_m\) be the \(K\)-scheme corresponding to the morphism \(\iota:\mathbb{Z}\rightarrow\mathbb{Z}\Omega:1\mapsto\alpha\).  The \(K^s\)-form of \(N\) has associated Hopf algebra morphism 
$$N^*_{K^s}:K^s[X,X^{-1}]\rightarrow K^s[X_i,X_i^{-1}]:X\mapsto X_1\cdots X_n,$$ where \(X_i\) is the coordinate given by \(\overline{g}_i\), and \(g_1,...,g_n\) is a full set of coset representatives for \(G/G_{K'}\), so that taking \(K\)-points of \(N\) recovers the usual field norm.  In summary:
\begin{lemma} \label{norm}
    The \(K\)-scheme \(N\) corresponding to the inclusion of modules \(\iota\) has \(K\)-points given by the field norm \(N(K)=N_{K'/K}:(K')^\times\rightarrow K^\times\). 
\end{lemma}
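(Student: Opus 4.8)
The plan is to compute the map $N(K)$ by base-changing to $K^s$, where the torus splits into a product of copies of $\mathbb{G}_m$, and then descending. First I would record the functor of points of the Weil restriction: for any $K$-algebra $R$ the defining adjunction gives $T'(R)=\Res_{K'/K}(\mathbb{G}_m)(R)=\mathbb{G}_m(R\otimes_K K')=(R\otimes_K K')^\times$ (cf. \cite{waterhouse_introduction_1979}), so in particular $T'(K)=(K')^\times$ and $\mathbb{G}_m(K)=K^\times$, and $N(K)$ is genuinely a map $(K')^\times\to K^\times$. Since $N$ corresponds under the contravariant equivalence to $\iota\colon 1\mapsto\alpha=\sum\Omega$, and the distinguished coordinate $X$ of $\mathbb{G}_m$ generates its character group, the pulled-back character $N^*(X)=\alpha$ is exactly the product-of-coordinates character $(x_\omega)_{\omega}\mapsto\prod_{\omega}x_\omega$ on $\mathbb{G}_m^\Omega\simeq T'_{K^s}$, which is the content of the Hopf-algebra map $X\mapsto X_1\cdots X_n$ recorded just before the statement.

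Next I would make the splitting explicit. The cosets $\Omega=G/G_{K'}$ index the $K$-embeddings of $K'$ into $K^s$: a coset $gG_{K'}$ gives the embedding $\sigma_{gG_{K'}}=g|_{K'}$, and these are the $n=[K':K]$ distinct embeddings (two elements of $G$ restricting equally to $K'$ iff they lie in the same left coset of $G_{K'}$). This yields the standard isomorphism of $K^s$-algebras
$$K^s\otimes_K K'\;\xrightarrow{\ \sim\ }\;\prod_{\omega\in\Omega}K^s,\qquad a\otimes b\mapsto\bigl(a\,\sigma_\omega(b)\bigr)_{\omega\in\Omega},$$
under which $T'_{K^s}\simeq\mathbb{G}_m^\Omega$ compatibly with the identification of character groups in Lemma \ref{chargroup}, the factor indexed by $\omega$ carrying the coordinate $X_\omega$. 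In particular, the inclusion $(K')^\times=T'(K)\hookrightarrow T'(K^s)=\prod_{\omega}(K^s)^\times$ sends $b\mapsto(\sigma_\omega(b))_\omega$, the tuple of conjugates of $b$.

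Composing, the $K^s$-points of $N$ send $(x_\omega)_\omega\mapsto\prod_\omega x_\omega$, so on the image of $(K')^\times$ this reads $b\mapsto\prod_{\omega\in\Omega}\sigma_\omega(b)=N_{K'/K}(b)$, the field norm. Because $\alpha=\sum\Omega$ is fixed by the permutation action of $G$, the character is $G$-invariant, so $N$ descends to a morphism over $K$; hence $N(K)$ is simply the restriction of $N(K^s)$ to the $K$-rational points $(K')^\times$, and equals $b\mapsto N_{K'/K}(b)$, as claimed.

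I expect the only real obstacle to be bookkeeping: matching the abstract coordinate functions $X_i$ of Lemma \ref{chargroup} with the concrete Galois conjugates $\sigma_\omega(b)$, i.e. pinning down the algebra isomorphism $K^s\otimes_K K'\simeq\prod_\Omega K^s$ and verifying that the product character pulls back to the product of conjugates. Once that dictionary is fixed, the $G$-invariance of $\prod_\omega\sigma_\omega(b)$ simultaneously guarantees the value lies in $K^\times$ and reproduces the classical description of $N_{K'/K}$, completing the identification.
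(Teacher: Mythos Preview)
Your proposal is correct and follows the same approach as the paper: identify the Hopf-algebra morphism over $K^s$ as $X\mapsto X_1\cdots X_n$ via the splitting $K^s\otimes_K K'\simeq\prod_\Omega K^s$, then observe that on $K$-points this is the product of Galois conjugates, i.e.\ the field norm. The paper is extremely terse here---it states the Hopf-algebra map and then simply asserts ``taking $K$-points of $N$ recovers the usual field norm''---so your version, which actually spells out the dictionary between the coordinates $X_i$ and the embeddings $\sigma_\omega$ and checks the descent, is a more complete rendering of the same argument rather than a different one.
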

\noindent With the lemmas recorded, we are ready to state and prove the theorem.
\begin{theorem}\label{idelenorm}
    Let \(E/K\) be a Galois extension of number fields with group \(G\), and suppose \(G_{K'}\subset G\) is the stabilizer of the subfield \(K'\subset E\), \(\Omega=G/G_{K'}\), and \(\alpha = \sum\Omega\).  If \(\iota\) is the morphism \(\mathbb{Z}\rightarrow\mathbb{Z}\Omega\) of \(G\)-modules given by \(1\mapsto\alpha\), and \(N\) is the scheme corresponding to \(\iota\), then \(N(\mathbb{A}_K):\mathbb{I}_{K'}\rightarrow\mathbb{I}_K\) is the idele norm.
\end{theorem}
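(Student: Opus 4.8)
The plan is to exploit the functoriality of the scheme $N$ together with the defining adjunction of Weil restriction, thereby reducing the computation of $N(\mathbb{A}_K)$ to a place-by-place comparison with local field norms.

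First I would identify $N$ with the canonical norm morphism $\Res_{K'/K}\mathbb{G}_m \to \mathbb{G}_m$. Both this norm morphism and the scheme $N$ attached to $\iota$ have $K^s$-form given by the Hopf-algebra map $X \mapsto X_1 \cdots X_n$ recorded before Lemma \ref{norm}, so by the equivalence of categories (equivalently, by Galois descent) they coincide. A crucial consequence is that the formation of $N$ commutes with base change: for every $K$-algebra $R$, the map on $R$-points
\[
N(R)\colon (R \otimes_K K')^\times \longrightarrow R^\times
\]
is the algebra norm of $R \otimes_K K'$ over $R$, the case $R = K$ being exactly Lemma \ref{norm}. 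Since we work in characteristic zero, $K'/K$ is separable and $R \otimes_K K'$ is finite \'etale over $R$, so this norm is the honest product of the conjugate coordinates.

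Next I would compute the adelic points. The adjunction $(\Res_{K'/K}\mathbb{G}_m)(R) = (R \otimes_K K')^\times$ with $R = \mathbb{A}_K$, together with the canonical isomorphism $\mathbb{A}_K \otimes_K K' \simeq \mathbb{A}_{K'}$, gives
\[
T'(\mathbb{A}_K) = (\mathbb{A}_K \otimes_K K')^\times \simeq \mathbb{A}_{K'}^\times = \mathbb{I}_{K'},
\]
while $\mathbb{G}_m(\mathbb{A}_K) = \mathbb{A}_K^\times = \mathbb{I}_K$. Under these identifications, the first step shows that $N(\mathbb{A}_K)$ is precisely the algebra norm of $\mathbb{A}_{K'}$ over $\mathbb{A}_K$.

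Finally I would verify that this algebra norm is the idele norm by decomposing over the places of $K$. For each place $v$ of $K$ one has $K_v \otimes_K K' \simeq \prod_{w \mid v} K'_w$, so that base change of $N$ along $K \to K_v$ splits as the product over $w \mid v$ of the local norm schemes for $K'_w / K_v$. Hence on the $v$-component the map $N(\mathbb{A}_K)$ sends $(x_w)_{w \mid v}$ to $\prod_{w \mid v} N_{K'_w / K_v}(x_w)$, which is exactly the defining formula for the idele norm. The main obstacle is this last step: one must confirm that base change of the single scheme $N$ to each completion $K_v$ genuinely decomposes as the product of the local norm schemes indexed by the places $w \mid v$, matching the local norms $N_{K'_w/K_v}$. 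This rests on compatibility of the Hopf-algebra description of $N$ with base change together with the decomposition $K_v \otimes_K K' \simeq \prod_{w \mid v} K'_w$; granting these, the factor-by-factor identification follows from the local analogue of Lemma \ref{norm}.
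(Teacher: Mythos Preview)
Your argument is correct, and it takes a genuinely different route from the paper's. The paper works through the character-lattice side of the equivalence of categories: for a place $\nu$ of $K$ it restricts the $G$-module $\mathbb{Z}\Omega$ to the decomposition group $\mathcal{D}_{\omega/\nu}$, breaks it into orbits $\Omega_i$ indexed by double cosets $\mathcal{D}_{\omega/\nu}\backslash G/G_{K'}$, matches these with the places $\eta_i$ of $K'$ above $\nu$ via an explicit isomorphism of Galois representations, and then applies Lemma~\ref{norm} to each local extension $K'_{\eta_i}/K_\nu$ separately. You instead stay on the functor-of-points side: identifying $N$ once and for all with the canonical norm morphism on $\Res_{K'/K}\mathbb{G}_m$, you use the adjunction $T'(R)=(R\otimes_K K')^\times$ to read off $N(R)$ as the algebra norm for every $K$-algebra $R$, and then the local decomposition comes from the standard splitting $K_\nu\otimes_K K'\simeq\prod_{w\mid\nu}K'_w$ rather than from double cosets. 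The two decompositions are of course equivalent, but your packaging avoids the explicit orbit and conjugation bookkeeping; the paper's approach, on the other hand, keeps the Galois-theoretic content visible and is more self-contained, not leaning on base-change compatibility of Weil restriction or the general fact that the norm morphism yields the algebra norm on arbitrary $R$-points.
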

\begin{proof}
    It suffices to check componentwise, so we need to compute \(N(K_\nu)\) for a place \(\nu\) of \(K\).  To do so, we view \(\mathbb{Z}\Omega\) as a \(\mathcal{D}_{\omega/\nu}\)-module, where \(\omega\) is a place of \(E\) over \(\nu\), and \(\mathcal{D}_{\omega/\nu}\subset G\) is the associated decomposition group.  Let \(g_1,...,g_m\) be a complete set of representatives in \(G\) for the double cosets \(\mathcal{D}_{\omega/\nu}\backslash G/G_{K'}\).  If \(\Omega_i\) is the \(\mathcal{D}_{\omega/\nu}\)-orbit of \(\overline{g}_i\) in \(\Omega\), then \(\mathbb{Z}\Omega=\bigoplus_i\mathbb{Z}\Omega_i\).  Furthermore, if \(\alpha_i=\sum\Omega_i\), then \(\alpha=\sum_i\alpha_i\), so that, if \(\iota_i\) is the morphism \(\mathbb{Z}\rightarrow\mathbb{Z}\Omega_i:1\mapsto\alpha_i\), we have the decomposition $$\iota=\sum\limits_{i=1}^m\iota_i.$$  Therefore, letting \(N_i\) be the \(K_\nu\)-scheme corresponding to \(\iota_i\), we have 
    $$N(K_\nu)=\prod\limits_{i=1}^mN_i(K_\nu).$$
    It thus remains to determine \(N_i(K_\nu)\).  Now, it is well known that each \(g_i\) corresponds to a place \(\eta_i\) of \(K'\) over \(\nu\).  More explicitly, we can let \(\omega_i=g_i^{-1}\omega\) and take \(\eta_i\) to be the place of \(K'\) divided by \(\omega_i\), so that \(g_i^{-1}\mathcal{D}_{\omega/\nu}g_i=\mathcal{D}_{\omega_i/\nu}\) and \(\mathcal{D}_{\omega_i/\eta_i}=\mathcal{D}_{\omega_i/\nu}\cap G_{K'}\).  Letting \(\Omega_i'=\mathcal{D}_{\omega_i/\nu}/\mathcal{D}_{\omega_i/\eta_i}\), observe that the maps $$\mathcal{D}_{\omega/\nu}\rightarrow\mathcal{D}_{\omega_i/\nu}:\gamma\mapsto \gamma^{g_i}$$ $$\Omega_i\rightarrow\Omega_i':\gamma\overline{g}_i\mapsto \overline{\gamma^{g_i}}$$ 
    determine an isomorphism of Galois representations \((\mathcal{D}_{\omega/\nu},\Omega_i)\rightarrow(\mathcal{D}_{\omega_i/\nu},\Omega_i')\) allowing an identification \(N_i(K_\nu)=N_i'(K_\nu)\), where \(N_i'\) is the \(K_\nu\)-scheme corresponding to the morphism \(\mathbb{Z}\rightarrow\mathbb{Z}\Omega_i':1\mapsto\sum\Omega_i'\).  By Lemma \ref{norm}, we know that \(N_i'(K_\nu)\) is the field norm \((K_{\eta_i}')^\times\rightarrow K_\nu^\times\). It follows that \(N(K_\nu)\) is the component over \(\nu\) of the idele norm \(\mathbb{I}_{K'}\rightarrow\mathbb{I}_K\).
\end{proof}

\section{Corresponding Abelian Extensions}\label{cae}

Let \(K_1\) and \(K_2\) be number fields integrally equivalent over \(F\).  Writing \(T_i=\)Res\(_{K_i/F}(\mathbb{G}_m)\), the isomorphism \(\mathbb{Z}[G/G_{K_1}]\simeq\mathbb{Z}[G/G_{K_2}]\) gives an isomorphism \(T_1\simeq T_2\), by Lemma \ref{chargroup}.  The \(F\)-points of the \(T_i\) are therefore isomorphic \(K_1^\times \simeq K_2^\times\), as are the \(\mathbb{A}_F\)-points \(\mathbb{I}_{K_1}\simeq\mathbb{I}_{K_2}\), where \(\mathbb{A}_F\) denotes the ring of \(F\)-adeles.  Furthermore, the isomorphism of ideles respects the diagonal embeddings \(K_i^\times\rightarrow\mathbb{I}_{K_i}\), so we can quotient to get an isomorphism of idele class groups \(\varphi:C_{K_1}\rightarrow C_{K_2}\) \cite{prasad_refined_2017}.  By class field theory, a finite abelian extension \(L_i/K_i\) is uniquely determined by the finite-index open subgroup of \(C_{K_i}\) given by the image of the idele class norm \(N_{L_i/K_i}\).  Moreover, every finite-index open subgroup of \(C_{K_i}\) is such an image: $$L_i/K_i\leftrightarrow N_{L_i/K_i}(C_{L_i})\subset C_{K_i}.$$
We say that \(L_1/K_1\) and \(L_2/K_2\) \textbf{correspond} if \(N_{L_2/K_2}(C_{L_2})=\varphi(N_{L_1/K_1}(C_{L_1}))\).  Observe that \(\varphi\) induces an isomorphism of Galois groups \(\text{Gal}(L_1/K_1)\rightarrow\text{Gal}(L_2/K_2)\), so that, in particular, \([L_1:K_1]=[L_2:K_2]\), which implies \([L_1:F]=[L_2:F]\).  We state the following proposition without proof, as it is an immediate consequence of our definition of corresponding abelian extensions.
\begin{proposition}\label{unandint}
Suppose \(L_i/K_i\), \(i=1,2\) correspond, and \(L_i'/K_i,i=1,2\) correspond.  Then \(L_iL_i'/K_i\), \(i=1,2\) correspond and \(L_i\cap L_i'/K_i\), \(i=1,2\) correspond.  
\end{proposition}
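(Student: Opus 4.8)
The plan is to reduce everything to the standard class field theory dictionary together with the trivial fact that a group isomorphism preserves the lattice operations on subgroups. Write $N_1 = N_{L_1/K_1}(C_{L_1})$ and $N_1' = N_{L_1'/K_1}(C_{L_1'})$ for the norm subgroups inside $C_{K_1}$, and $N_2, N_2'$ for the corresponding subgroups inside $C_{K_2}$. The hypotheses that $L_i/K_i$ correspond and that $L_i'/K_i$ correspond are, by definition, exactly the two equalities $N_2 = \varphi(N_1)$ and $N_2' = \varphi(N_1')$. Note first that $L_iL_i'/K_i$ and $L_i\cap L_i'/K_i$ are again finite abelian extensions (compositum and intersection of finite abelian extensions), so the norm-group dictionary applies to them.

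First I would recall the two facts from class field theory governing compositum and intersection. Because the correspondence $L/K\leftrightarrow N_{L/K}(C_L)$ is inclusion-reversing, the compositum $L_iL_i'$---the smallest abelian extension of $K_i$ containing both $L_i$ and $L_i'$---corresponds to the largest open subgroup contained in both norm groups, namely $N_i\cap N_i'$; and the intersection $L_i\cap L_i'$---the largest abelian extension contained in both---corresponds to the smallest open subgroup containing both, namely the product $N_i\cdot N_i'$ (a subgroup since $C_{K_i}$ is abelian, and open and of finite index since it contains the open finite-index subgroup $N_i$). Thus
\[
N_{L_iL_i'/K_i}(C_{L_iL_i'}) = N_i\cap N_i', \qquad N_{L_i\cap L_i'/K_i}(C_{L_i\cap L_i'}) = N_i\cdot N_i'.
\]

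The remaining step is purely group-theoretic: since $\varphi\colon C_{K_1}\to C_{K_2}$ is an isomorphism of abelian groups, it carries intersections to intersections and products of subgroups to products, so $\varphi(N_1\cap N_1') = \varphi(N_1)\cap\varphi(N_1') = N_2\cap N_2'$ and $\varphi(N_1\cdot N_1') = \varphi(N_1)\cdot\varphi(N_1') = N_2\cdot N_2'$. Combining these with the displayed equalities yields $N_{L_2L_2'/K_2}(C_{L_2L_2'}) = \varphi(N_{L_1L_1'/K_1}(C_{L_1L_1'}))$ and the analogous identity for the intersections, which is precisely what it means for $L_iL_i'/K_i$ and for $L_i\cap L_i'/K_i$ to correspond. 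There is no serious obstacle: the only point requiring care is the inclusion-reversing bookkeeping in the class field theory step, where one must match the compositum with the \emph{intersection} of norm groups and the field intersection with the \emph{product} of norm groups, not the reverse. Once that dictionary is pinned down, the conclusion follows formally from $\varphi$ being a group isomorphism.
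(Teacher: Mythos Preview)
Your argument is correct and is precisely the standard verification the paper has in mind; the paper in fact omits the proof entirely, remarking only that the proposition is ``an immediate consequence of our definition of corresponding abelian extensions.'' Your write-up simply spells out that immediate consequence via the inclusion-reversing norm-group dictionary and the fact that the group isomorphism $\varphi$ preserves intersections and products of subgroups.
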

\noindent The remainder of this section is devoted to fleshing out the relations between corresponding \(L_1\) and \(L_2\).  Specifically, in Section \ref{similar}, we prove Theorem \ref{P}, and then in Section \ref{different}, we demonstrate that the \(L_i\) are not even weakly Kronecker equivalent in general.  Throughout the sequel, \(L_i/K_i\), \(i=1,2\) will denote corresponding abelian extensions of integrally equivalent number fields.  We will also freely identify \(\prod K_{i,\nu}^\times\) (product over some finite set of places) with its image under the projection \(\mathbb{I}_{K_i}\rightarrow C_{K_i}\).  Whether we are working in the idele group or the idele class group should be clear from context.  Lastly, given an abelian extension \(L_i/K_i\) and a place \(\nu\) of \(K_i\), we will refer to the completion of \(L_i\) at a place over \(\nu\) simply as \(L_{i,\nu}\), since \(L_{i,\eta_1}\simeq L_{i,\eta_2}\) for places \(\eta_1,\eta_2\) of \(L_i\) over \(\nu\).
\subsection{Arithmetic Similarity}\label{similar}\hfill\\\\
\indent Given a place \(\omega\) of \(F\), all direct products from here on are over the places \(\nu\) (or \(\eta\)) of \(K_i\) dividing \(\omega\), unless otherwise stated.  We know that
\(T_i(F_\omega)=\prod K_{i,\nu}^\times\), so applying \(T_1\simeq T_2\) to \(F_\omega\) gives an isomorphism
\begin{equation}\label{local}
    \varphi_\omega: \prod\limits_{\nu|\omega}K_{1,\nu}^\times\rightarrow\prod\limits_{\eta|\omega}K_{2,\eta}^\times.
\end{equation}
Define \(N_{i,\omega}\subset C_{K_i}\) by \(N_{i,\omega}=N_{L_i/K_i}(C_{L_i})\cap\prod K_{i,\nu}^\times\).  Recall that 
\begin{equation}\label{localgroups}
K_{i,\nu}^\times/(N_{i,\omega}\cap K_{i,\nu}^\times)\simeq\text{Gal}(L_{i,\nu}/K_{i,\nu})
\end{equation}
and 
\begin{equation}\label{ram}
|\mathcal{O}_{K_{i,\nu}}^\times/(N_{i,\omega}\cap\mathcal{O}_{K_{i,\nu}}^\times)|=e(L_{i,\nu}/K_{i,\nu}).
\end{equation}
See Chapter X in \cite{tate_class_2008} and Chapter V in \cite{serre_local_1979} for details.  The functions \(\varphi_\omega\) provide a means of probing the \(L_i\) locally.  This proves fruitful,  allowing us to compare, respectively, \(\zeta_{L_i}\), \(\Delta_{L_i}\), and the Galois closure of \(L_i/\mathbb{Q}\), \(i=1,2\). Then, in Theorem \ref{thanks} we will make use of the Norm Limitation Theorem from global class field theory to conclude that corresponding extensions have the same maximal abelian sub-extension over \(F\), which allows us to relate their \(K\)-groups.  We begin with an observation shedding light on both the Galois closures and the \(\zeta_{L_i}\).

\begin{proposition}\label{char2}\label{galoisclosure}
    If \(K_1,K_2\) are integrally equivalent number fields over \(F\), with corresponding abelian extensions \(L_i/K_i\), \(i=1,2\), then \(L_1/F\) and \(L_2/F\) have the same Galois closure.  
\end{proposition}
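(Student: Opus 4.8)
The plan is to reduce the assertion to a statement about completely split primes. Since a prime of $F$ splits completely in a number field exactly when it splits completely in the Galois closure of that field, it suffices to show that $L_1$ and $L_2$ admit the same completely split primes of $F$, up to a finite set. The standard consequence of the Chebotarev density theorem — that a Galois extension of $F$ is determined by its set of completely split primes — then forces the Galois closures of $L_1/F$ and $L_2/F$ to coincide. (Concretely, if the two split sets agree outside a finite set, then comparing with the compositum of the two closures and using that the density of split primes in a Galois extension is the reciprocal of its degree forces the degrees, hence the fields, to agree.) So everything comes down to a clean description of complete splitting.

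The key is a local criterion phrased through the groups $N_{i,\omega}$. Fix an unramified place $\omega$ of $F$. Because $F\subseteq K_i\subseteq L_i$, the prime $\omega$ splits completely in $L_i$ precisely when it splits completely in $K_i$ \emph{and} every place $\nu\mid\omega$ of $K_i$ splits completely in $L_i$. By \eqref{localgroups}, this last condition says $L_{i,\nu}=K_{i,\nu}$, equivalently $N_{i,\omega}\cap K_{i,\nu}^\times=K_{i,\nu}^\times$, for every $\nu\mid\omega$. Since $N_{i,\omega}\subseteq\prod_{\nu\mid\omega}K_{i,\nu}^\times$ by definition, demanding that $N_{i,\omega}$ contain each local factor is the same as demanding $N_{i,\omega}=\prod_{\nu\mid\omega}K_{i,\nu}^\times$. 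Hence $\omega$ splits completely in $L_i$ if and only if $\omega$ splits completely in $K_i$ and $N_{i,\omega}$ is the full group $\prod_{\nu\mid\omega}K_{i,\nu}^\times$.

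Both halves of this criterion transfer across the correspondence. First, integral equivalence gives $\mathbb{Z}[G/G_{K_1}]\simeq\mathbb{Z}[G/G_{K_2}]$ as $G$-modules, and an element of $G$ acts trivially on such a permutation module exactly when it lies in the core of the associated point stabilizer; so the cores of $G_{K_1}$ and $G_{K_2}$ agree and $K_1,K_2$ share a single Galois closure $K/F$. Consequently $\omega$ splits completely in $K_1$ if and only if it does in $K$ if and only if it does in $K_2$. Second, by the definition of corresponding extensions $\varphi$ carries $N_{L_1/K_1}(C_{L_1})$ onto $N_{L_2/K_2}(C_{L_2})$, while its restriction to the $\omega$-component is the isomorphism $\varphi_\omega$ of \eqref{local}; as $\varphi$ is a bijection one has $\varphi(A\cap B)=\varphi(A)\cap\varphi(B)$, so $\varphi_\omega(N_{1,\omega})=N_{2,\omega}$. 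Being an isomorphism, $\varphi_\omega$ sends the full group $\prod_{\nu\mid\omega}K_{1,\nu}^\times$ to the full group $\prod_{\eta\mid\omega}K_{2,\eta}^\times$, so $N_{1,\omega}$ is full if and only if $N_{2,\omega}$ is. Combining the two equivalences shows that $\omega$ splits completely in $L_1$ if and only if it splits completely in $L_2$ for every unramified $\omega$, completing the reduction.

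The delicate point — and the reason one cannot say more — is that $\varphi_\omega$ respects the product $\prod_{\nu\mid\omega}K_{i,\nu}^\times$ as a whole but \emph{not} its decomposition into the individual local factors $K_{i,\nu}^\times$, since the integral Gassmann isomorphism permutes and mixes the cosets lying over $\omega$. Thus $\varphi_\omega$ cannot be used to match the individual local degrees $[L_{i,\nu}:K_{i,\nu}]$, which is precisely the obstruction realized in Theorem \ref{notwKeq}, where the $L_i$ fail to be weakly Kronecker equivalent. Complete splitting survives only because it is the coarse condition ``$N_{i,\omega}$ is everything,'' manifestly preserved by \emph{any} isomorphism. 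I therefore expect the main work to lie in the local criterion of the second paragraph: verifying through \eqref{localgroups} that $N_{i,\omega}$ being the whole group is equivalent to total splitting of $L_i/K_i$ at $\omega$, together with the elementary observation that a subgroup of $\prod_{\nu\mid\omega}K_{i,\nu}^\times$ containing every factor is the entire product.
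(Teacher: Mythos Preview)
Your proposal is correct and follows essentially the same route as the paper: reduce to showing the $L_i$ have the same completely split primes over $F$, then use the local criterion via \eqref{localgroups} that complete splitting at $\omega$ amounts to $\omega$ splitting completely in $K_i$ together with $N_{i,\omega}=\prod_{\nu\mid\omega}K_{i,\nu}^\times$, and transfer both conditions via the integral equivalence and the isomorphism $\varphi_\omega$. Your added justifications (the core argument for $K_1,K_2$ sharing a Galois closure, the $\varphi(A\cap B)=\varphi(A)\cap\varphi(B)$ step, and the closing remarks on why finer local data does not transfer) are sound elaborations of points the paper leaves implicit.
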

\begin{proof}
Recall that the Galois closure of an extension of number fields \(E/F\) is uniquely determined by the set of primes in \(F\) which split completely in \(E\).  Thus, it suffices to show that a prime \(\omega\) of \(F\) splits completely in \(L_1\) if and only if it does so in \(L_2\).  Now, \(\omega\) splits completely in \(L_i\) if and only if it splits completely in \(K_i\) and Gal\((L_{i,\nu}/K_{i,\nu})\) is trivial for each place \(\nu\) of \(K_i\) over \(\omega\). But then, since \(K_1\) and \(K_2\) are integrally equivalent over \(F\), we know that \(\omega\) splits completely in \(K_1\) if and only if it does so in \(K_2\).  Furthermore, by equation \ref{localgroups}, Gal\((L_{i,\nu}/K_{i,\nu})=1\) for every \(\nu\) over \(\omega\) if and only if \(N_{i,\omega}=\prod K_{i,\nu}^\times\).  Since \(\varphi_\omega\) in equation \ref{local} restricts to an isomorphism \(N_{1,\omega}\rightarrow N_{2,\omega}\), we know that \(N_{1,\omega}=\prod K_{1,\nu}^\times\) if and only if \(N_{2,\omega}=\prod K_{2,\eta}^\times\).  Thus \(\omega\) splits completely in \(L_1\) if and only if it does so in \(L_2\), so the \(L_i\) indeed have the same Galois closure over \(F\). 
\end{proof}

\begin{corollary}
    Corresponding abelian extensions are ultra-coarsely arithmetically equivalent in the sense of Proposition \ref{ultra-coarse}.
\end{corollary}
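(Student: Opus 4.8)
The plan is to deduce the corollary from Proposition \ref{uclemma} together with the Galois-closure statement just established in Proposition \ref{galoisclosure}. By Proposition \ref{uclemma}, two number fields are ultra-coarsely arithmetically equivalent precisely when they share a Galois closure over $\mathbb{Q}$, equivalently when the same set of rational primes splits completely in each. Since $L_1, L_2$ are number fields over $\mathbb{Q}$, it therefore suffices to prove that a rational prime $p$ splits completely in $L_1$ if and only if it splits completely in $L_2$.

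First I would reduce the splitting of $p$ in $L_i/\mathbb{Q}$ to the tower $L_i \supset F \supset \mathbb{Q}$. By multiplicativity of the ramification indices and residue degrees in towers, $p$ splits completely in $L_i$ if and only if $p$ splits completely in $F$ and every prime $\omega$ of $F$ above $p$ splits completely in $L_i/F$. The first condition is independent of $i$, so the only point to check is that, for each fixed $\omega \mid p$, complete splitting of $\omega$ in $L_1$ is equivalent to complete splitting of $\omega$ in $L_2$.

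This is exactly where Proposition \ref{galoisclosure} enters. Since $L_1/F$ and $L_2/F$ have a common Galois closure $M$ over $F$, and a prime of $F$ splits completely in a given extension if and only if it splits completely in the Galois closure of that extension, a prime $\omega$ of $F$ splits completely in $L_1$ iff it splits completely in $M$ iff it splits completely in $L_2$. Combining this with the tower reduction of the previous paragraph gives that $p$ splits completely in $L_1$ iff it splits completely in $L_2$, and Proposition \ref{uclemma} then yields the claimed ultra-coarse arithmetic equivalence.

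The only genuine subtlety — which I would flag as the conceptual crux rather than a true obstacle — is the passage from \emph{same Galois closure over $F$}, which is what Proposition \ref{galoisclosure} supplies, to \emph{same Galois closure over $\mathbb{Q}$}, which is what ultra-coarse arithmetic equivalence requires, being defined through splitting types $S_{L_i}(p)$ of rational primes. When $F = \mathbb{Q}$ the corollary is immediate. For general $F$ it is precisely the transitivity of complete splitting in the tower $L_i \supset F \supset \mathbb{Q}$ that bridges the gap, and no further input about the integral Gassmann structure is needed beyond Proposition \ref{galoisclosure}.
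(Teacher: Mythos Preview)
Your proof is correct and matches the paper's intended reasoning: the paper states the corollary without proof, treating it as an immediate consequence of Proposition~\ref{galoisclosure} combined with Proposition~\ref{uclemma}. You have correctly identified and resolved the one genuine wrinkle---that Proposition~\ref{galoisclosure} gives a common Galois closure over $F$ while ultra-coarse arithmetic equivalence concerns the Galois closure over $\mathbb{Q}$---via the tower argument for complete splitting, which is exactly the bridge the paper leaves implicit.
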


\noindent Before stating and proving the next corollary, we fix some terminology.  Say that a rational prime \(p\) \textbf{splits relatively completely} in \(L_i/K_i\) if every prime of \(K_i\) over \(p\) splits completely in \(L_i\), and observe that arguments in the proof of Proposition \ref{galoisclosure} guarantee that a rational prime splits relatively completely in \(L_1/K_1\) if and only if it does so in \(L_2/K_2\).

\begin{corollary}\label{zeta}
    If \(\zeta_{L_i}(s)=\sum_{n\in\mathbb{N}_+}\frac{a_i(n)}{n^s}\), and \(n\) is an integer whose prime divisors all split relatively completely in the \(L_i/K_i\), \(i=1,2\), then \(a_1(n)=a_2(n)\).
\end{corollary}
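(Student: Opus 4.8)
The plan is to reduce the claim to an equality of \emph{splitting types} $S_{L_1}(p)=S_{L_2}(p)$ at each rational prime $p$ dividing $n$, and then to deduce this from the arithmetic equivalence of the base fields. Recall that $a_i(n)$ counts the integral ideals of $\mathcal{O}_{L_i}$ of absolute norm $n$. Grouping the Euler product of $\zeta_{L_i}$ by rational prime, the local factor at $p$ is $\prod_{\mathfrak{P}\mid p}(1-p^{-f_{\mathfrak{P}}s})^{-1}=\sum_{c\geq 0}a_i(p^c)p^{-cs}$, where $f_{\mathfrak{P}}$ is the residue degree of $\mathfrak{P}$ over $\mathbb{Q}$; hence $a_i(p^c)$ is the number of tuples $(x_{\mathfrak{P}})_{\mathfrak{P}\mid p}$ of non-negative integers with $\sum_{\mathfrak{P}}f_{\mathfrak{P}}x_{\mathfrak{P}}=c$, a quantity depending only on the multiset $S_{L_i}(p)=\{\{f_{\mathfrak{P}}\}\}_{\mathfrak{P}\mid p}$. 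Since $a_i$ is multiplicative, $a_i(n)=\prod_{p\mid n}a_i(p^{v_p(n)})$, so it suffices to prove $S_{L_1}(p)=S_{L_2}(p)$ for every prime $p$ dividing $n$.

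First I would exploit the hypothesis that each such $p$ splits relatively completely in both $L_i/K_i$ (recall that, as observed just before the statement, $p$ does so in $L_1/K_1$ if and only if in $L_2/K_2$). Writing $m=[L_1:K_1]=[L_2:K_2]$, complete splitting of every prime $\mathfrak{p}$ of $K_i$ over $p$ means that $\mathfrak{p}$ has exactly $m$ primes $\mathfrak{P}$ of $L_i$ above it, each with $f(\mathfrak{P}/\mathfrak{p})=1$ and hence $f_{\mathfrak{P}}=f(\mathfrak{p}/p)$. Thus $S_{L_i}(p)$ is obtained from $S_{K_i}(p)$ by repeating each residue degree $m$ times, so $S_{L_1}(p)=S_{L_2}(p)$ reduces to $S_{K_1}(p)=S_{K_2}(p)$. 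It is exactly here that the hypothesis is indispensable: by Theorem \ref{notwKeq} the $L_i$ are in general not even weakly Kronecker equivalent, so their splitting types disagree at most primes, and the relative-complete-splitting condition is precisely what strips away the $L_i$-specific behavior and leaves only the splitting data of the arithmetically indistinguishable base fields.

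It then remains to establish $S_{K_1}(p)=S_{K_2}(p)$ for all $p$. Since $K_1,K_2$ are integrally equivalent over $F$, tensoring the isomorphism $\mathbb{Z}[G/G_{K_1}]\simeq\mathbb{Z}[G/G_{K_2}]$ with $\mathbb{Q}$ shows $(G,G_{K_1},G_{K_2})$ is a Gassmann triple, where $G=\Gal(N/F)$ and $N$ is the Galois closure of $K_i/F$; hence $\Ind_{G_{K_1}}^{G}\mathbf{1}\simeq\Ind_{G_{K_2}}^{G}\mathbf{1}$, and the Artin factorization $\zeta_{K_i}=L(\Ind_{G_{K_i}}^{G}\mathbf{1},N/F)$ gives $\zeta_{K_1}=\zeta_{K_2}$. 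Equality of these Dirichlet series forces equality of Euler factors at every $p$, hence of the multisets of residue degrees, i.e. $S_{K_1}(p)=S_{K_2}(p)$ for all $p$ (note that ramification indices never enter, only residue degrees). Combining the three steps yields $S_{L_1}(p)=S_{L_2}(p)$ for every $p\mid n$, and therefore $a_1(n)=a_2(n)$. I expect the only genuine obstacle to be the bookkeeping of the middle step — verifying that relative complete splitting multiplies residue-degree multiplicities uniformly by $m$ while preserving the degrees themselves, and articulating clearly that this is what reduces the problem to the base fields; the remainder is the standard Euler-product and Gassmann formalism.
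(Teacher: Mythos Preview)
Your proposal is correct and follows essentially the same approach as the paper: both arguments reduce to showing $S_{L_1}(p)=S_{L_2}(p)$ for primes $p$ that split relatively completely in $L_i/K_i$, obtain this from $S_{K_1}(p)=S_{K_2}(p)$ (arithmetic equivalence of the base fields) together with the observation that relative complete splitting makes $S_{L_i}(p)$ the $m$-fold repetition of $S_{K_i}(p)$, and then read off equality of the Dirichlet coefficients from equality of the relevant Euler factors. The paper packages the last step as an equality of partial Euler products $\zeta_{L_1,\mathcal{P}}=\zeta_{L_2,\mathcal{P}}$, whereas you unwind multiplicativity of $a_i$ directly, but the content is identical.
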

\begin{proof}
    Let \(\mathcal{P}\) consist of those rational primes which split relatively completely in the \(L_i/K_i\), \(i=1,2\). For \(\text{Re}(s)>1\), let $$\zeta_{L_i,\mathcal{P}}(s)=\prod\limits_{\mathfrak{p}|p\in\mathcal{P}}(1-N(\mathfrak{p})^{-s})^{-1},$$where the primes \(\mathfrak{p}\) are in \(\mathcal{O}_{L_i}\). A positive integer \(f\) is in \(S_{L_i}(p)\) with multiplicity equal to the number of prime ideals in \(\mathcal{O}_{L_i}\) with absolute norm \(p^f\), and \(\zeta_{L_i,\mathcal{P}}\) counts the number of ideals in \(\mathcal{O}_{L_i}\) whose absolute norm is only divisible by primes in \(\mathcal{P}\). For \(p\in\mathcal{P}\), \(S_{L_1}(p)=S_{L_2}(p)\), since \(S_{K_1}(p)=S_{K_2}(p)\) and \(p\) splits relatively completely in \(L_i/K_i\), \(i=1,2\).  Therefore, we have \(\zeta_{L_1,\mathcal{P}}=\zeta_{L_2,\mathcal{P}}\).  Now, if every prime divisor of \(n\) is in \(\mathcal{P}\), then \(a_i(n)\) is the coefficient of \(n^{-s}\) in \(\zeta_{L_i,\mathcal{P}}\), and we conclude \(a_1(n)=a_2(n)\).
\end{proof}

\begin{remark}By Grunwald-Wang, given a finite set \(S\) of rational primes, the \(L_i/K_i\) can be chosen to split relatively completely over each \(p\in S\).  In particular, given \(M\in\mathbb{N}_+\), there are corresponding abelian extensions \(L_1,L_2\) such that \(a_1(n)=a_2(n)\) for any \(n\in\mathbb{N}_+\) with \(n\leq M\).
\end{remark}

\noindent Arithmetically equivalent \(K_1,K_2\) have the same signature and contain the same roots of unity, so there is an abstract isomorphism \(\mathcal{O}_{K_1}^\times\simeq\mathcal{O}_{K_2}^\times\).  For integrally equivalent \(K_1,K_2\), it turns out that the isomorphism \(\varphi:K_1^\times\rightarrow K_2^\times\) in fact restricts to an isomorphism \(\mathcal{O}_{K_1}^\times\simeq\mathcal{O}_{K_2}^\times\).  The keystone is Proposition \ref{integrality}, which may also be leveraged to relate the discriminants of the \(L_i\). 

\begin{proposition}\label{integrality}
    \(\varphi_\omega\) restricts to an isomorphism \(\prod\limits \mathcal{O}_{1,\nu}^\times\simeq\prod\limits \mathcal{O}_{2,\eta}^\times\). 
\end{proposition}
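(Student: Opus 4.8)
The plan is to recognize $\prod_{\nu\mid\omega}\mathcal{O}_{1,\nu}^\times$ and $\prod_{\eta\mid\omega}\mathcal{O}_{2,\eta}^\times$ as \emph{intrinsic} subgroups of the locally compact groups $T_1(F_\omega)=\prod K_{1,\nu}^\times$ and $T_2(F_\omega)=\prod K_{2,\eta}^\times$ — namely as their maximal compact subgroups — and then to observe that $\varphi_\omega$ must preserve such a subgroup simply because it is an isomorphism of topological groups. First I would record that $\varphi_\omega$ is indeed a topological isomorphism: by equation \ref{local} it is obtained by base-changing the $F$-isomorphism $T_1\simeq T_2$ to $F_\omega$ and evaluating on $F_\omega$-points, and a morphism of affine $F_\omega$-schemes induces a continuous map on $F_\omega$-points because the coordinates are given by polynomials, so an isomorphism induces a homeomorphism that is simultaneously a group isomorphism.

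Next I would verify the maximal-compact characterization at a non-archimedean $\omega$. For each $\nu\mid\omega$ the choice of a uniformizer $\varpi$ gives $K_{i,\nu}^\times\simeq\varpi^{\mathbb{Z}}\times\mathcal{O}_{i,\nu}^\times$ with $\mathcal{O}_{i,\nu}^\times$ profinite, hence compact; any compact subgroup of $K_{i,\nu}^\times$ projects to a compact, hence finite, hence trivial subgroup of the torsion-free group $\mathbb{Z}$, so it is contained in $\mathcal{O}_{i,\nu}^\times$. Thus $\mathcal{O}_{i,\nu}^\times$ is the unique maximal compact subgroup of $K_{i,\nu}^\times$, and since a finite product of maximal compact subgroups is the maximal compact subgroup of the product, $\prod_{\nu\mid\omega}\mathcal{O}_{i,\nu}^\times$ is the maximal compact subgroup of $T_i(F_\omega)$. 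Because $\varphi_\omega$ and $\varphi_\omega^{-1}$ are both continuous homomorphisms, each carries a compact subgroup to a compact subgroup; applying this in both directions shows that $\varphi_\omega$ restricts to a bijection between the two maximal compact subgroups, which is exactly the asserted isomorphism.

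The main obstacle — really the only point requiring care — is ensuring that the description of $\prod\mathcal{O}_{i,\nu}^\times$ I exploit is one that $\varphi_\omega$ is \emph{guaranteed} to respect. The tempting description ``$x$ with $|x|_\nu=1$ in each coordinate'' is not obviously preserved, since $\varphi_\omega$ arises from a lattice isomorphism that may permute and mix the coordinates and so need not respect the individual valuations; this is precisely why I route the argument through the purely topological notion of maximal compact subgroup, which every topological group isomorphism preserves. For an archimedean $\omega$ there is nothing to prove under the convention $\mathcal{O}_{i,\nu}=K_{i,\nu}$, since then each factor is all of $K_{i,\nu}^\times$ and $\varphi_\omega$ already gives the required isomorphism; alternatively, declaring $\mathcal{O}_{i,\nu}^\times$ to be the maximal compact subgroup ($\{\pm1\}$ or the unit circle) lets the same argument apply verbatim. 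I would also note in passing that one could instead attempt to extend $T_1\simeq T_2$ to an isomorphism of the integral Weil restrictions $\Res_{\mathcal{O}_{K_i}/\mathcal{O}_F}(\mathbb{G}_m)$ over $\mathcal{O}_F$ and take $\mathcal{O}_{F_\omega}$-points, but this invites delicate behavior at primes ramifying in the splitting field $E$, whereas the maximal-compact argument sidesteps integrality of the models entirely.
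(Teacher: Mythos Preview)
Your proof is correct and takes a genuinely different route from the paper's. The paper argues purely from the abstract group structure: writing $H_i=\prod_{\nu\mid\omega} K_{i,\nu}^\times$ and $U_i=\prod_{\nu\mid\omega}\mathcal{O}_{i,\nu}^\times$, one has $H_i\simeq\mathbb{Z}^n\oplus U_i$ where $n$ is the number of places of $K_i$ over $\omega$, so $H_2/\varphi_\omega(U_1)\simeq H_1/U_1\simeq\mathbb{Z}^n$; since $U_2$ is virtually pro-$p$ it admits no nontrivial free abelian quotient, forcing its image in $H_2/\varphi_\omega(U_1)$ to vanish, i.e.\ $U_2\subset\varphi_\omega(U_1)$, and the reverse inclusion follows by the symmetric argument. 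Your approach instead invokes the continuity of $\varphi_\omega$---available because it arises from an isomorphism of affine $F_\omega$-schemes---together with the characterization of $U_i$ as the unique maximal compact subgroup of $H_i$. Your argument is more conceptual and transplants immediately to any isomorphism of reductive groups over a non-archimedean local field; the paper's argument has the complementary virtue of never leaving the category of abstract abelian groups, so it does not rely on checking that the scheme-theoretic topology on $T_i(F_\omega)$ agrees with the product topology on $\prod K_{i,\nu}^\times$, a point you use but do not spell out.
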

\begin{proof}
    Let \(n\) be the number of distinct prime divisors of \(\omega\) in \(K_i\) (note this is independent of whether \(i=1\) or \(i=2\)), \(H_i=\prod\limits K_{i,\nu}^\times\), \(U_i=\prod\limits \mathcal{O}_{i,\nu}^\times\). We know \(H_i\simeq \mathbb{Z}^n\bigoplus U_i\), so \(H_1/U_1\simeq \mathbb{Z}^n\), and therefore \(H_2/\varphi_\omega(U_1)\simeq\mathbb{Z}^n\). But then, letting \(\pi\) denote the projection \(H_2\rightarrow H_2/\varphi_\omega(U_1)\), we know that \(\pi(H_2)\simeq\pi(\mathbb{Z}^n)\pi(U_2)\) is free abelian of rank \(n\) and \(\pi(U_2)\simeq U_2/U_2\cap\varphi_\omega(U_1)\). Now, the \(U_i\) are virtually pro-\(p\), so they have no nontrivial free abelian quotients.  Therefore, \(\pi(U_2)=0\), so \(U_2\subset\varphi_\omega(U_1)\). The inclusion \(\varphi_\omega(U_1)\subset U_2\) is a consequence of the decomposition of \(H_2\) above, because otherwise \(U_1\) would surject a nontrivial free abelian group.
\end{proof}
\begin{corollary}
    The isomorphism \(K_1^\times\simeq K_2^\times\) restricts to an isomorphism \(\mathcal{O}_{K_1}^\times\simeq\mathcal{O}_{K_2}^\times\). 
\end{corollary}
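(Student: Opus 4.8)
The plan is to realize each unit group $\mathcal{O}_{K_i}^\times$ as an intersection taken inside the idele group, and then transport it across the idele isomorphism using Proposition \ref{integrality} place by place. Write $\Phi\co\mathbb{I}_{K_1}\to\mathbb{I}_{K_2}$ for the isomorphism of ideles obtained by applying $T_1\simeq T_2$ to $\mathbb{A}_F$. By functoriality this $\Phi$ is the restricted product of the local maps $\varphi_\omega$ over all places $\omega$ of $F$, and, as recorded at the start of Section \ref{cae}, it carries the diagonal copy of $K_1^\times$ onto that of $K_2^\times$, inducing there precisely the isomorphism $K_1^\times\simeq K_2^\times$ of the corollary; call this restriction $\psi$.

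First I would record the standard description of the global units: for $x\in K_i^\times$ one has $x\in\mathcal{O}_{K_i}^\times$ if and only if $v_\nu(x)=0$ for every finite place $\nu$ of $K_i$, that is, if and only if the diagonal image of $x$ lies in the subgroup
$$
W_i=\prod\limits_{\nu\nmid\infty}\mathcal{O}_{i,\nu}^\times\times\prod\limits_{\nu|\infty}K_{i,\nu}^\times\subset\mathbb{I}_{K_i}
$$
of ideles that are units at every finite place. Identifying $K_i^\times$ with its diagonal image, this says $\mathcal{O}_{K_i}^\times=K_i^\times\cap W_i$.

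Next I would show $\Phi(W_1)=W_2$. Since every place of $K_i$ lying over a finite place $\omega$ of $F$ is itself finite, Proposition \ref{integrality} gives $\varphi_\omega\big(\prod_{\nu|\omega}\mathcal{O}_{1,\nu}^\times\big)=\prod_{\eta|\omega}\mathcal{O}_{2,\eta}^\times$ for each such $\omega$; at the finitely many archimedean $\omega$ the corresponding factor of $W_i$ is the full $\prod_{\nu|\omega}K_{i,\nu}^\times$, which $\varphi_\omega$ maps isomorphically onto its counterpart with no constraint to check. Assembling these local statements over all $\omega$ yields $\Phi(W_1)=W_2$. Because $\Phi$ is a bijection it commutes with intersections, so
$$
\psi(\mathcal{O}_{K_1}^\times)=\Phi(K_1^\times\cap W_1)=\Phi(K_1^\times)\cap\Phi(W_1)=K_2^\times\cap W_2=\mathcal{O}_{K_2}^\times,
$$
which is exactly the claim.

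The only real obstacle is bookkeeping rather than mathematical depth: one must confirm that the single global map $\Phi$ genuinely decomposes as the product of the $\varphi_\omega$, so that the purely local integrality statements of Proposition \ref{integrality} can be aggregated into the global equality $\Phi(W_1)=W_2$, and one must verify the elementary identity $\mathcal{O}_{K_i}^\times=K_i^\times\cap W_i$ inside the ideles. With these two points in hand, the corollary follows immediately from the fact that $\Phi$ is an isomorphism respecting the diagonals.
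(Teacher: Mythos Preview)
Your proof is correct and follows essentially the same approach as the paper: both characterize $\mathcal{O}_{K_i}^\times$ as the elements of $K_i^\times$ that are local units at every finite place, and then transport this description across the idele isomorphism using Proposition~\ref{integrality}. Your version is somewhat more explicit about the bookkeeping---separating archimedean from nonarchimedean places and noting that $\Phi$ is the restricted product of the $\varphi_\omega$---but the underlying argument is the same.
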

\begin{proof}
    Follows from Proposition \ref{integrality}, along with the facts that \(\mathcal{O}_{K_i}^\times\) is precisely the set of elements of \(K_i^\times\) with \(\nu\)-adic valuation equal to \(0\) for every place \(\nu\) of \(K_i\), and \(K_i^\times\cap\mathcal{O}_{K_{i,\nu}}^\times\) is precisely the set of elements of \(K_i^\times\) with \(\nu\)-adic valuation equal to \(0\).  
\end{proof}
\begin{corollary}\label{discrdiv}
    A rational prime divides \(\Delta_{L_1}\) if and only if it divides \(\Delta_{L_2}\).  
\end{corollary}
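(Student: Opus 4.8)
The plan is to identify the rational prime divisors of $\Delta_{L_i}$ with the rational primes that ramify in $L_i/\mathbb{Q}$, and then to split such ramification into a part coming from $K_i/\mathbb{Q}$ and a part coming from $L_i/K_i$. A rational prime $p$ divides $\Delta_{L_i}$ if and only if $p$ ramifies in $L_i/\mathbb{Q}$. Given primes $\mathfrak{P}\mid\nu\mid p$, with $\nu$ a prime of $K_i$, multiplicativity of the ramification index in a tower, $e(\mathfrak{P}/p)=e(\mathfrak{P}/\nu)\,e(\nu/p)$, shows that $p$ ramifies in $L_i/\mathbb{Q}$ exactly when either $p$ ramifies in $K_i/\mathbb{Q}$ or some prime $\nu$ of $K_i$ above $p$ ramifies in $L_i/K_i$. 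I would prove that each of these two conditions is insensitive to the index $i$.

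For the first condition, note that tensoring the defining isomorphism $\mathbb{Z}[G/G_{K_1}]\simeq\mathbb{Z}[G/G_{K_2}]$ with $\mathbb{Q}$ exhibits $K_1,K_2$ as arithmetically equivalent, so $\Delta_{K_1}=\Delta_{K_2}$ by Theorem \ref{PP}; in particular $p$ ramifies in $K_1/\mathbb{Q}$ if and only if it ramifies in $K_2/\mathbb{Q}$. (Even when $F\neq\mathbb{Q}$ the same module isomorphism forces $\zeta_{K_1}=\zeta_{K_2}$, and hence $\Delta_{K_1}=\Delta_{K_2}$, so this step is unaffected by the base field.)

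The heart of the argument is the second condition, which I would analyze one place of $F$ at a time. Fix a place $\omega$ of $F$ dividing $p$, and recall the isomorphism $\varphi_\omega$ of equation \ref{local}, the subgroup $N_{i,\omega}$, and $U_i=\prod_{\nu\mid\omega}\mathcal{O}_{K_{i,\nu}}^\times$. By equation \ref{ram}, the ramification index $e(L_{i,\nu}/K_{i,\nu})$ is trivial precisely when $\mathcal{O}_{K_{i,\nu}}^\times\subset N_{i,\omega}$; since $N_{i,\omega}$ is a subgroup and $U_i$ is generated by its factors, it follows that no prime of $K_i$ above $\omega$ ramifies in $L_i/K_i$ if and only if $U_i\subset N_{i,\omega}$. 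Now $\varphi_\omega$ is a group isomorphism carrying $U_1$ onto $U_2$ by Proposition \ref{integrality} and $N_{1,\omega}$ onto $N_{2,\omega}$ as established in the proof of Proposition \ref{galoisclosure}; consequently $U_1\subset N_{1,\omega}$ if and only if $U_2\subset N_{2,\omega}$. Taking the union over the finitely many places $\omega\mid p$, some prime of $K_1$ above $p$ ramifies in $L_1/K_1$ if and only if some prime of $K_2$ above $p$ ramifies in $L_2/K_2$.

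Combining the two conditions yields $p\mid\Delta_{L_1}$ if and only if $p\mid\Delta_{L_2}$, as desired. I expect the only real obstacle to be the translation afforded by equation \ref{ram} from triviality of the local ramification index to the inclusion $U_i\subset N_{i,\omega}$, together with the bookkeeping needed to transport that inclusion through $\varphi_\omega$; the input on the $K_i$ is routine once one observes that an integral Gassmann triple is in particular a Gassmann triple.
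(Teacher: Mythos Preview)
Your proof is correct and follows essentially the same route as the paper's: both split ramification of a rational prime in $L_i$ into ramification in $K_i/\mathbb{Q}$ (handled by arithmetic equivalence of the $K_i$) and ramification in $L_i/K_i$ (handled via equation~\ref{ram} and Proposition~\ref{integrality}, transporting the inclusion $U_i\subset N_{i,\omega}$ through $\varphi_\omega$). The only cosmetic difference is that you work one $F$-place $\omega$ at a time and then aggregate over $\omega\mid p$, whereas the paper passes directly to $U_{i,p}=\prod_{\nu\mid p}\mathcal{O}_{K_{i,\nu}}^\times$ and $N_{i,p}$, implicitly bundling the $\varphi_\omega$ for $\omega\mid p$.
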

\begin{proof}
    Note by equation \ref{ram} that a rational prime \(p\) is unramified in \(L_i\) if and only if it is unramified in \(K_i\) and \(U_{i,p}\subset N_{i,p}\), where \(U_{i,p}=\prod_{\nu|p}\mathcal{O}^\times_{K_{i,\nu}}\), and \(N_{i,p}=N_{L_i/K_i}(C_{L_i})\cap\prod_{\nu|p}K_{i,\nu}^\times\).  But then the \(K_i\) are arithmetically equivalent, so they are unramified over the same rational primes, and Proposition \ref{integrality} says that \(U_{1,p}\subset N_{1,p}\) if and only if \(U_{2,p}\subset N_{2,p}\).  Hence, a rational prime is unramified in \(L_1\) if and only if it is unramified in \(L_2\).  Since the prime divisors of \(\Delta_{L_i}\) are exactly those rational primes which ramify in \(L_i\), the claim holds.
\end{proof}

\noindent We have now come to the main theorem of this section, whose proof requires Theorem \ref{idelenorm} and class field theory.  Theorem \ref{thanks} is used to relate the odd \(K\)-groups of corresponding abelian extensions.

\begin{theorem}\label{thanks}
    If \(N_{L/K}\) denotes the idele class norm \(C_L\rightarrow C_K\), then \(N_{L_1/F}(C_{L_1})=N_{L_2/F}(C_{L_2})\).  In particular, \(L_1/F\) and \(L_2/F\) contain the same maximal abelian subextension.
\end{theorem}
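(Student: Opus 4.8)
The plan is to reduce the statement to a module-theoretic fact about the integral Gassmann isomorphism, using transitivity of the norm in the tower $L_i/K_i/F$ together with Theorem \ref{idelenorm}. First I would factor the idele class norm as $N_{L_i/F}=N_{K_i/F}\circ N_{L_i/K_i}$, so that
$$N_{L_i/F}(C_{L_i}) = N_{K_i/F}\bigl(N_{L_i/K_i}(C_{L_i})\bigr).$$
By the very definition of corresponding extensions, $\varphi\bigl(N_{L_1/K_1}(C_{L_1})\bigr)=N_{L_2/K_2}(C_{L_2})$, where $\varphi\colon C_{K_1}\to C_{K_2}$ is the class group isomorphism induced by $T_1\simeq T_2$. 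Hence it suffices to show that $\varphi$ intertwines the two norms down to $F$, i.e. that $N_{K_2/F}\circ\varphi$ agrees with $N_{K_1/F}$ on subgroups of $C_{K_1}$.

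The key step is the commutativity of the square with vertical maps $N_{K_i/F}$ and top map $\varphi$. By Theorem \ref{idelenorm}, each $N_{K_i/F}$ is obtained by taking $\mathbb{A}_F$-points of the $F$-scheme $N_i$ corresponding to the morphism $\iota_i\colon\mathbb{Z}\to\mathbb{Z}\Omega_i$, $1\mapsto\alpha_i=\sum\Omega_i$, with $\Omega_i=G/G_{K_i}$; and $\varphi$ is induced by the torus isomorphism whose character-level counterpart is the integral Gassmann isomorphism $\psi$ relating $\mathbb{Z}\Omega_1$ and $\mathbb{Z}\Omega_2$. By the contravariant equivalence of Section \ref{tori}, checking that the scheme square commutes is equivalent to checking that $\psi$ carries the element $\alpha_1$ to $\alpha_2$. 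Here I would observe that $\alpha_i$ generates the $G$-invariant submodule $(\mathbb{Z}\Omega_i)^G$, which is free of rank one precisely because $G$ permutes the cosets transitively. Any $\mathbb{Z}G$-isomorphism preserves invariants, so $\psi$ maps $\mathbb{Z}\alpha_1$ onto $\mathbb{Z}\alpha_2$, and therefore $\psi(\alpha_1)=\pm\alpha_2$.

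The sign is the one apparent obstacle, and it is the point I would treat with care: a priori $\psi$ could send $\alpha_1\mapsto-\alpha_2$, in which case $N_{K_2/F}\circ\varphi$ equals not $N_{K_1/F}$ but its composition with inversion on $C_F$. This ambiguity is harmless for the statement, however, since these maps are only ever evaluated on subgroups, and the images $N_{L_i/F}(C_{L_i})$ are subgroups of $C_F$, hence closed under inversion. Consequently
$$N_{L_2/F}(C_{L_2}) = N_{K_2/F}\bigl(\varphi(N_{L_1/K_1}(C_{L_1}))\bigr) = N_{K_1/F}\bigl(N_{L_1/K_1}(C_{L_1})\bigr)^{\pm 1} = N_{L_1/F}(C_{L_1}),$$
which is the main equality.

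For the ``in particular'' clause I would invoke the Norm Limitation Theorem: for a finite extension $L/F$, the subgroup $N_{L/F}(C_L)$ coincides with $N_{M/F}(C_M)$, where $M$ is the maximal subextension of $L/F$ that is abelian over $F$, and $M$ is exactly the class field of $F$ attached to that subgroup. Since $N_{L_1/F}(C_{L_1})=N_{L_2/F}(C_{L_2})$, the corresponding class fields coincide, so $L_1/F$ and $L_2/F$ have the same maximal abelian subextension. The only substantive input is the intertwining of norms established in the middle paragraphs; everything else is bookkeeping with the norm tower and a standard appeal to class field theory.
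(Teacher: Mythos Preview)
Your proposal is correct and follows essentially the same route as the paper: factor the norm through the tower, use Theorem~\ref{idelenorm} and the contravariant equivalence to reduce the intertwining of $N_{K_i/F}$ with $\varphi$ to the module-theoretic fact that the Gassmann isomorphism sends $\alpha_1$ to $\pm\alpha_2$, and then apply the Norm Limitation Theorem. Your explicit handling of the sign via closure of subgroups under inversion is exactly what the paper leaves implicit when it says the bottom map of the scheme square is ``either the identity or inversion.''
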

\begin{proof}
    Let \(\Omega_i=G/G_{K_i}\), \(A\) be the \(G\)-equivariant linear isomorphism \(\mathbb{Z}\Omega_1\rightarrow\mathbb{Z}\Omega_2\), and define $$\alpha_i=\sum\Omega_i.$$
    Since the action of \(G\) on \(\Omega_i\) is transitive, \(\mathbb{Z}\alpha_i\subset\mathbb{Z}\Omega_i\) is the only rank 1 submodule fixed pointwise by \(G\).  But then \(gA\alpha_1=Ag\alpha_1=A\alpha_1\) for each \(g\in G\), so \(A\alpha_1=n\alpha_2\) for some \(n\in\mathbb{Z}\).  Notice that \(\alpha_i\) corresponds to the vector \((1,1,...,1)\) when we use the cosets as a basis for \(\mathbb{Z}\Omega_i\), so in fact \(n\) is an eigenvalue of \(A\).  Since \(A\) and \(A^{-1}\) are each represented in this basis by an invertible integer matrix, \(n=\pm 1\).  Letting \(N_i:T_i\rightarrow\mathbb{G}_m\) be the scheme corresponding to the \(G\)-module morphism \(\mathbb{Z}\rightarrow\mathbb{Z}\Omega_i:1\mapsto\alpha_i\), we therefore have the commutative diagram of schemes
    
$$\begin{CD}
        T_1 @>>> T_2\\
        @VN_1VV     @VVN_2V\\
\mathbb{G}_m @>>> \mathbb{G}_m
    \end{CD}$$

\noindent where the bottom isomorphism is either the identity or inversion.  Taking \(\mathbb{A}_F\)-points gives
$$\begin{CD}
        \mathbb{I}_{K_1} @>>> \mathbb{I}_{K_2}\\
        @VVV     @VVV\\
\mathbb{I}_F @>>> \mathbb{I}_F
    \end{CD}$$

\noindent where the vertical arrows are the idele norms, by Theorem \ref{idelenorm}.  The diagonal embeddings are respected, so we can pass to idele class groups and obtain the following commutative diagram.

\begin{equation}\label{hal}
    \begin{CD}
        C_{K_1} @>\varphi>> C_{K_2}\\
        @VN_{K_1/F}VV     @VVN_{K_2/F}V\\
        C_F @>>> C_F
    \end{CD}
\end{equation}

\noindent From the fact that \(N_{L_i/F}=N_{K_i/F}\circ N_{L_i/K_i}\) and the commutativity of diagram (\ref{hal}), we find that \(N_{L_1/F}(C_{L_1})=N_{L_2/F}(C_{L_2})\), so  \(L_1/F\) and \(L_2/F\) have the same maximal abelian subextension, by the Norm Limitation Theorem (c.f. Theorem 7.3.10 in \cite{kedlaya_notes_nodate}, Theorem 7 in Chapter XIV of \cite{tate_class_2008}).    
\end{proof}

\begin{corollary}\label{primabext}
    Fixing a separable closure \(F^s\) of \(F\), if \(\alpha\in F^s\), and \(F(\alpha)\) is abelian, then \(K_1(\alpha)\) and \(K_2(\alpha)\) correspond.  
\end{corollary}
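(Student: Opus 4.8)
The plan is to identify the norm subgroups of \(K_1(\alpha)/K_1\) and \(K_2(\alpha)/K_2\) inside \(C_{K_1}\) and \(C_{K_2}\) explicitly, and then to compare them using the commutative diagram (\ref{hal}) from Theorem \ref{thanks}. Since \(\alpha\in F^s\), the extension \(F(\alpha)/F\) is finite, and it is abelian by hypothesis; consequently each compositum \(K_i(\alpha)=K_i\cdot F(\alpha)\) is a finite abelian extension of \(K_i\), because restriction embeds \(\Gal(K_i(\alpha)/K_i)\) into the abelian group \(\Gal(F(\alpha)/F)\). Thus \(K_1(\alpha)/K_1\) and \(K_2(\alpha)/K_2\) are legitimate candidates for correspondence, and it remains only to verify the equality of norm subgroups that defines the correspondence.

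Write \(H=N_{F(\alpha)/F}(C_{F(\alpha)})\subset C_F\) for the norm subgroup cut out by \(F(\alpha)/F\). Next I would invoke the functoriality (base change) of the reciprocity map: the Artin map \(C_{K_i}\to\Gal(K_i(\alpha)/K_i)\), followed by the injective restriction \(\Gal(K_i(\alpha)/K_i)\hookrightarrow\Gal(F(\alpha)/F)\), agrees with the composition \(C_{K_i}\xrightarrow{N_{K_i/F}}C_F\to\Gal(F(\alpha)/F)\) of the idele class norm with the Artin map of \(F(\alpha)/F\). Because the restriction is injective, the kernel of the Artin map attached to \(K_i(\alpha)/K_i\) coincides with the kernel of this composite, yielding
\[
N_{K_i(\alpha)/K_i}(C_{K_i(\alpha)})=N_{K_i/F}^{-1}(H),\qquad i=1,2.
\]

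Finally, I would compare the two subgroups by means of diagram (\ref{hal}), which supplies an automorphism \(\psi\) of \(C_F\)---either the identity or inversion---satisfying \(N_{K_2/F}\circ\varphi=\psi\circ N_{K_1/F}\). Since \(H\) is a subgroup of \(C_F\) it is stable under inversion, so \(\psi(H)=\psi^{-1}(H)=H\) in either case. For \(x\in C_{K_1}\) one then has \(N_{K_2/F}(\varphi(x))=\psi(N_{K_1/F}(x))\in H\) precisely when \(N_{K_1/F}(x)\in H\); that is, \(\varphi\) carries \(N_{K_1/F}^{-1}(H)\) bijectively onto \(N_{K_2/F}^{-1}(H)\). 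Combined with the identification of the previous paragraph, this gives \(N_{K_2(\alpha)/K_2}(C_{K_2(\alpha)})=\varphi\bigl(N_{K_1(\alpha)/K_1}(C_{K_1(\alpha)})\bigr)\), which is exactly the assertion that \(K_1(\alpha)\) and \(K_2(\alpha)\) correspond. The main obstacle is the base-change step: it rests on the naturality of the reciprocity map under the norm, which must be invoked carefully so that the injectivity of restriction legitimately transfers the kernel computation from \(C_F\) up to \(C_{K_i}\). By contrast, the sign ambiguity in \(\psi\) inherited from Theorem \ref{thanks} is harmless here, precisely because subgroups are inversion-stable.
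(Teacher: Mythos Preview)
Your argument is correct, and it takes a genuinely different route from the paper's proof. The paper does not identify the norm subgroups $N_{K_i(\alpha)/K_i}(C_{K_i(\alpha)})$ explicitly; instead it lets $L_2/K_2$ be the extension corresponding to $L_1=K_1(\alpha)$, invokes only the \emph{conclusion} of Theorem~\ref{thanks} (equality of maximal abelian subextensions over $F$) to see that $\alpha\in L_2$, and then finishes with a short degree-counting symmetry argument to force $L_2=K_2(\alpha)$. Your approach is more direct and more structural: it pins down the norm subgroup of $K_i(\alpha)/K_i$ as $N_{K_i/F}^{-1}(H)$ via the base-change compatibility of the reciprocity map, and then reads the correspondence straight off diagram~(\ref{hal}), the sign ambiguity being absorbed because $H$ is a subgroup. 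The cost is that you must import one extra (standard) piece of class field theory, namely the functoriality $N_{K'L/K'}(C_{K'L})=N_{K'/K}^{-1}\bigl(N_{L/K}(C_L)\bigr)$ for $L/K$ abelian, whereas the paper gets by with nothing beyond what was already used in proving Theorem~\ref{thanks}. The benefit is a cleaner picture: your proof makes transparent \emph{why} abelian extensions pulled up from $F$ correspond, and would adapt immediately to any abelian $L/F$ in place of $F(\alpha)$.
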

\begin{proof}
    Suppose \(L_1=K_1(\alpha)\).  By Theorem \ref{thanks}, \(L_1/F\) and \(L_2/F\) have the same maximal abelian subextension, so \(\alpha\in L_2\).  Hence \([K_2(\alpha):K_2]\leq [L_2:K_2]\).  But then, \([L_2:K_2]=[K_1(\alpha):K_1]\), so \([K_2(\alpha):K_2]\leq[K_1(\alpha):K_1]\).  We can just as easily argue that \([K_1(\alpha):K_1]\leq[K_2(\alpha):K_2]\), so in fact \([K_1(\alpha):K_1]=[K_2(\alpha):K_2]\), and thus \([K_2(\alpha):K_2]=[L_2:K_2]\), so \(L_2=K_2(\alpha)\).  By symmetry we conclude that \(L_1=K_1(\alpha)\) if and only if \(L_2=K_2(\alpha)\).  In other words, \(K_1(\alpha)\) and \(K_2(\alpha)\) are corresponding abelian extensions.
\end{proof}

\begin{proposition}\label{sig}
    If \([L_i:K_i]\) is odd, then \(L_1\) and \(L_2\) have the same signature. 
\end{proposition}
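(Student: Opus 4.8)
The plan is to compute the signature of each $L_i$ in terms of that of $K_i$, exploiting that $m:=[L_1:K_1]=[L_2:K_2]$ is odd to pin down the behaviour of the archimedean places, and then to invoke the equality of the signatures of the arithmetically equivalent base fields $K_1,K_2$.

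First I would record that $m=[L_1:K_1]=[L_2:K_2]$, which was already observed in Section~\ref{cae} from the isomorphism $\Gal(L_1/K_1)\simeq\Gal(L_2/K_2)$ induced by $\varphi$. The crux is then the following archimedean observation: for an archimedean place $\nu$ of $K_i$, equation~(\ref{localgroups}) identifies $\Gal(L_{i,\nu}/K_{i,\nu})$ (the decomposition group at $\nu$) with a subgroup of the abelian group $\Gal(L_i/K_i)$, which has odd order $m$. But the only nontrivial extension of archimedean local fields is $\mathbb{C}/\mathbb{R}$, whose Galois group has order $2\nmid m$; hence $\Gal(L_{i,\nu}/K_{i,\nu})$ is trivial and $L_{i,\nu}=K_{i,\nu}$ for every archimedean $\nu$. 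Therefore each real place of $K_i$ splits into exactly $m$ real places of $L_i$, and each complex place into $m$ complex places, giving $r_1(L_i)=m\,r_1(K_i)$ and $r_2(L_i)=m\,r_2(K_i)$, where $(r_1,r_2)$ denotes the signature.

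Finally, I would use that integral equivalence implies arithmetic equivalence (tensoring the $\mathbb{Z}G$-module isomorphism with $\mathbb{Q}$ yields a Gassmann triple), so $K_1$ and $K_2$ have the same signature by Theorem~\ref{PP}(2). (Alternatively, this is immediate from $\varphi_\omega$ in (\ref{local}): it is an isomorphism of real Lie groups, hence preserves the number $2^{\#\{\nu\mid\omega\,:\,\nu\text{ real}\}}$ of connected components of $\prod_{\nu\mid\omega}K_{i,\nu}^\times$, so $K_1,K_2$ have equally many real places over each real $\omega$, and the equality of degrees then forces equal signatures.) Combining this with the equality of $m$ gives $r_1(L_1)=m\,r_1(K_1)=m\,r_1(K_2)=r_1(L_2)$ and likewise for $r_2$, so $L_1$ and $L_2$ have the same signature.

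I expect the only genuine content to be the odd-degree step, namely that a real place of $K_i$ cannot become complex in $L_i$ because $\Gal(\mathbb{C}/\mathbb{R})$ has order $2$ while $[L_i:K_i]$ is odd; this is precisely where the hypothesis $m$ odd is used, and it fails in general otherwise. Everything else is bookkeeping together with the (already invoked) fact that integrally equivalent fields are arithmetically equivalent.
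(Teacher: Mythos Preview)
Your argument is correct and coincides with the paper's first proof: both observe that the decomposition group at an archimedean place is a $2$-group mapping into the odd-order group $\Gal(L_i/K_i)$, hence is trivial, so no real place of $K_i$ complexifies in $L_i$; the conclusion then follows from the equality of signatures of the arithmetically equivalent $K_1,K_2$. The paper phrases this via $T_{i,\infty}/N_{i,\infty}$ being a $2$-group with trivial image in $\Gal(L_i/K_i)$, and also records a second, group-theoretic proof using a conjugacy-counting criterion of Sutherland, but your version is the same as the first.
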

\begin{proof}
    Let \(F_\infty=\prod_{\nu|\infty}F_\nu\subset\mathbb{A}_F\) and set \(T_{i,\infty}=T_i(F_\infty)\) and \(N_{i,\infty}=N_{L_i/K_i}(C_{L_i})\cap T_{i,\infty}\). If \([L_i:K_i]\) is odd, then the image of \(T_{i,\infty}\) under the map \(C_{K_i}\rightarrow\Gal(L_i/K_i)\) is trivial, as \(T_{i,\infty}/N_{i,\infty}\) is a \(2\)-group.  Therefore, \(N_{i,\infty}=T_{i,\infty}\).  In particular, no archimedean prime of \(K_i\) ramifies in \(L_i\).  Since \(K_1\) and \(K_2\) have the same signature, so do \(L_1\) and \(L_2\).\\\\
    Alternatively, let \(G\) be the group of the Galois closure of the \(L_i\) over \(\mathbb{Q}\), and let \(\mathcal{D}\subset G\) be a subgroup of order \(1\) or \(2\).  Because \(K_1\) and \(K_2\) are integrally equivalent over \(F\), they are arithmetically equivalent, so by Proposition 2.6 in \cite{sutherland_stronger_nodate}, $$|\{g\mathcal{D}g^{-1}\subset G_{K_1}\text{ }|\text{ }g\in G\}|=|\{g\mathcal{D}g^{-1}\subset G_{K_2}\text{ }|\text{ }g\in G\}|.$$ But then the image of \(\mathcal{D}\cap G_{K_i}\) is trivial under the projection \(G_{K_i}\rightarrow G_{K_i}/G_{L_i}\), so \(\mathcal{D}\subset G_{K_i}\) if and only if \(\mathcal{D}\subset G_{L_i}\), and therefore $$|\{g\mathcal{D}g^{-1}\subset G_{L_1}\text{ }|\text{ }g\in G\}|=|\{g\mathcal{D}g^{-1}\subset G_{L_2}\text{ }|\text{ }g\in G\}|.$$  By Proposition 2.2 in \cite{sutherland_stronger_nodate}, it follows that \(G/G_{L_1}\simeq G/G_{L_2}\) as \(\mathcal{D}\)-sets.  This is then true, in particular, for \(\mathcal{D}\) a decomposition group of the archimedean prime. 
\end{proof}

\begin{proposition}
    If \(K_{n,t}(F)\) denotes the torsion part of \(K_n(F)\) for a number field \(F\), then for odd \(n\not\equiv 3\) mod \(8\), \(K_{n,t}(L_1)\simeq K_{n,t}(L_2)\).  Furthermore, if \([L_i:K_i]\) is odd, then \(K_n(L_1)\simeq K_n(L_2)\) for odd \(n\geq 3\).
\end{proposition}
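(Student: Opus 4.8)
The plan is to reduce the statement to the two invariants appearing in the structure formula for $K_n(F)$ with $n$ odd, namely the signature $(r_1,r_2)$ and the integer $w_i$ with $i=(n+1)/2$. Reading off the four cases of that formula, the torsion subgroup of $K_n(F)$ is $\mathbb{Z}/w_i\mathbb{Z}$, $\mathbb{Z}/\tfrac12 w_i\mathbb{Z}$, or $\mathbb{Z}/w_i\mathbb{Z}$ according as $n\equiv 1,5,7$ mod $8$, so in each of these cases it depends only on $w_i$. For $n\equiv 3$ mod $8$ the torsion subgroup carries the extra factor $(\mathbb{Z}/2\mathbb{Z})^{r_1-1}$, which is governed by the signature. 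Thus the first assertion will follow once I establish $w_i(L_1)=w_i(L_2)$, and the second will follow by combining this with the equality of signatures.

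The heart of the argument is to prove $w_i(L_1)=w_i(L_2)$ for every $i$. By the defining relation (\ref{exp}) it suffices to show that for each rational prime $p$ and each $\nu$, the groups $\text{Gal}(L_1(\zeta_{p^\nu})/L_1)$ and $\text{Gal}(L_2(\zeta_{p^\nu})/L_2)$ are isomorphic, since then they share the same exponent. Writing $\mu=\zeta_{p^\nu}$, the extension $F(\mu)/F$ is abelian, so the restriction map gives $\text{Gal}(L_i(\mu)/L_i)\simeq\text{Gal}(F(\mu)/(L_i\cap F(\mu)))$. I would then note that $L_i\cap F(\mu)$ is a subextension of $L_i/F$ which is abelian over $F$ (being a subextension of the abelian extension $F(\mu)/F$), hence is contained in the maximal abelian subextension $M$ of $L_i/F$; since $M\subseteq L_i$ this forces $L_i\cap F(\mu)=M\cap F(\mu)$.

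The crucial input is Theorem \ref{thanks}, which tells us that $L_1/F$ and $L_2/F$ have the \emph{same} maximal abelian subextension $M$. Consequently $L_1\cap F(\mu)=M\cap F(\mu)=L_2\cap F(\mu)$, and the displayed isomorphism yields $\text{Gal}(L_1(\mu)/L_1)\simeq\text{Gal}(L_2(\mu)/L_2)$ for every $\mu$. Feeding this into (\ref{exp}) gives $w_i(L_1)=w_i(L_2)$ for all $i$, which settles the first assertion for $n\not\equiv 3$ mod $8$. For the second assertion, when $[L_i:K_i]$ is odd Proposition \ref{sig} gives that $L_1$ and $L_2$ have the same signature, so $r_1$ and $r_2$ agree; plugging $w_i(L_1)=w_i(L_2)$ together with $r_1(L_1)=r_1(L_2)$ and $r_2(L_1)=r_2(L_2)$ into all four cases of the structure formula yields $K_n(L_1)\simeq K_n(L_2)$ for every odd $n\geq 3$, now including $n\equiv 3$ mod $8$.

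The main obstacle, and really the only delicate point, is the control of the signature: both the free part and the exceptional $(\mathbb{Z}/2\mathbb{Z})^{r_1-1}$ factor depend on $r_1$, which is not preserved in general and is forced to agree only when $[L_i:K_i]$ is odd via Proposition \ref{sig}. This is precisely why the torsion statement must exclude $n\equiv 3$ mod $8$ in the general case. The reduction of $w_i$ to the cyclotomic Galois groups and thence to the common maximal abelian subextension over $F$ is the key maneuver; it parallels Proposition \ref{qgivesk} for arithmetically equivalent fields, with Theorem \ref{thanks} playing the role of the equality $F_1\cap\mathbb{Q}(\mu)=F_2\cap\mathbb{Q}(\mu)$ used there.
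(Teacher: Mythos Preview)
Your proof is correct and takes a somewhat different route from the paper's. To obtain $\text{Gal}(L_1(\zeta)/L_1)\simeq\text{Gal}(L_2(\zeta)/L_2)$, the paper works \emph{inside} the correspondence: it invokes Corollary~\ref{primabext} and Proposition~\ref{unandint} to see that $L_1(\zeta)/K_1$ and $L_2(\zeta)/K_2$ themselves correspond (since $L_i(\zeta)=L_i\cdot K_i(\zeta)$), so that $\varphi$ induces an isomorphism $\text{Gal}(L_1(\zeta)/K_1)\simeq\text{Gal}(L_2(\zeta)/K_2)$ which, by the very definition of corresponding extensions, restricts to the subgroups fixing $L_i$. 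You instead stay \emph{outside} the correspondence machinery: you reduce $\text{Gal}(L_i(\mu)/L_i)$ to $\text{Gal}(F(\mu)/(L_i\cap F(\mu)))$ and then observe $L_i\cap F(\mu)=M\cap F(\mu)$, where $M$ is the common maximal abelian subextension over $F$ supplied by Theorem~\ref{thanks}. This is the more elementary path---it uses only Theorem~\ref{thanks} rather than Proposition~\ref{unandint} and Corollary~\ref{primabext}---and it mirrors the proof of Proposition~\ref{qgivesk} closely. The paper's route buys the stronger intermediate fact that the towers $L_i(\zeta)/K_i$ correspond compatibly via $\varphi$, which is of independent interest.

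One small omission: the structure formula you invoke is stated only for odd $n\geq 3$, so the case $n=1$ (where $K_{1,t}(L_i)$ is the group of roots of unity in $L_i$) technically needs a separate word. Your own maximal-abelian-subextension argument handles it immediately, and the paper does exactly this via Theorem~\ref{thanks} before passing to $n\geq 3$.
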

\begin{proof}
   Let \(L_i/K_i\), \(i=1,2\), be corresponding abelian extensions and \(n\) an odd integer.  If \(n=1\), then we know the first claim holds, since the \(L_i\) have the same roots of unity, by Theorem \ref{thanks}, so assume additionally that \(n\geq 3\).  Letting \(\mathcal{C}_{i,p}=\{(\text{Gal}(L_i(\zeta_{p^k})/L_i),k)\}_{k\in\mathbb{N}}\), where \(\zeta_{p^k}\) is a primitive \(p^k\)-th root of unity, we know by equation \ref{exp} that the torsion part of \(K_n(L_i)\) is determined by \(\mathcal{C}_{i,p}\) for odd \(n\) not congruent to \(3\) mod \(8\). From Proposition \ref{unandint} and Corollary \ref{primabext}, we know that \(L_1(\zeta)/K_1\) and \(L_2(\zeta)/K_2\) correspond for any primitive root of unity \(\zeta\), since \(L_i(\zeta)=L_iK_i(\zeta)\).  Therefore, \(\text{Gal}(L_1(\zeta)/K_1)\simeq\text{Gal}(L_2(\zeta)/K_2)\), and this isomorphism restricts to an isomorphism \(\text{Gal}(L_1(\zeta)/L_1)\simeq\text{Gal}(L_2(\zeta)/L_2)\), by the definition of corresponding abelian extensions.  Therefore, \(\mathcal{C}_{1,p}=\mathcal{C}_{2,p}\) for each prime \(p\).  This verifies that \(K_{n,t}(L_1)\simeq K_{n,t}(L_2)\) for \(n\not\equiv 3\) mod \(8\).  Supposing now that \([L_i:K_i]\) is odd, then by Proposition \ref{sig}, the signatures of the \(L_i\) are the same, so \(K_n(L_1)\simeq K_n(L_2)\) for any odd \(n\geq 3\).
\end{proof}

\subsection{Corresponding Abelian Extensions That Are Not Weakly Kronecker Equivalent}\label{different}\hfill\\\\
\indent We prove that non-isomorphic integrally equivalent number fields always possess corresponding abelian extensions that are not weakly Kronecker equivalent.  Before proving the main result of this subsection, we need to set the stage with some lemmas.  We use \(e_i\) to denote the \(i^{th}\) standard basis vector.

\begin{lemma}\label{A}
    If \((G,G_1,G_2)\) is a non-trivial integral Gassmann triple\footnotemark\footnotetext{i.e. the \(G_i\) are not conjugate in \(G\)} of finite groups, and \(A\in\text{GL}_n(\mathbb{Z})\) is the matrix expressing the \(\mathbb{Z}[G]\)-module isomorphism \(\mathbb{Z}[G/G_1]\rightarrow\mathbb{Z}[G/G_2]\) using cosets as bases, then each row of \(A\) contains multiple non-zero components. 
\end{lemma}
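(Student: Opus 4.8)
The plan is to argue by contradiction. Suppose some row of $A$ had a single nonzero entry; I will show this forces $A$ to be a signed permutation matrix implementing an isomorphism $G/G_1\simeq G/G_2$ of $G$-sets, which would make $G_1$ and $G_2$ conjugate, contradicting the non-triviality hypothesis.

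First I would record the constraint that $\mathbb{Z}G$-linearity places on the entries of $A$. Index the columns of $A$ by $G/G_1$ (the basis of the domain) and the rows by $G/G_2$ (the basis of the codomain), and let $G$ act on each index set by left multiplication. Comparing $A(\gamma\cdot e_j)$ with $\gamma\cdot A(e_j)$ on basis vectors yields the orbit relation $A_{\gamma i,\gamma j}=A_{ij}$ for all $\gamma\in G$ and all indices $i\in G/G_2$, $j\in G/G_1$. Since $G$ acts transitively on the row index set $G/G_2$, any two rows $i,i'$ satisfy $i'=\gamma i$ for some $\gamma$, and then $j\mapsto\gamma j$ is a bijection of the columns carrying row $i$ onto row $i'$ entrywise. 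Hence all rows of $A$ share a common multiset of entries; in particular they all have the same number of nonzero components.

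Next, assuming one row---hence by the previous step every row---has exactly one nonzero entry, $A$ has precisely $n$ nonzero entries. Because $A\in\GL_n(\mathbb{Z})$, these must occupy $n$ distinct columns, since a repeated column would leave an all-zero column and destroy invertibility; thus $A$ is a monomial matrix, and matching $\det A=\pm 1$ against the product of its nonzero entries forces each of them to be $\pm 1$. Writing $Ae_j=\epsilon_j e_{\pi(j)}$ for a bijection $\pi\colon G/G_1\to G/G_2$ and signs $\epsilon_j\in\{\pm 1\}$, the identity $A(\gamma\cdot e_j)=\gamma\cdot A(e_j)$ gives $\epsilon_{\gamma j}e_{\pi(\gamma j)}=\epsilon_j e_{\gamma\pi(j)}$; comparing the supporting basis vectors yields $\pi(\gamma j)=\gamma\pi(j)$, so $\pi$ is $G$-equivariant.

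A $G$-equivariant bijection between the transitive $G$-sets $G/G_1$ and $G/G_2$ identifies point-stabilizers: the stabilizer of $\pi(G_1)$ equals $G_1$, while writing $\pi(G_1)=gG_2$ its stabilizer is $gG_2g^{-1}$, so $G_1=gG_2g^{-1}$. This contradicts the assumption that the triple is non-trivial. Since invertibility already forbids a zero row, every row must therefore contain at least two nonzero components. The only step needing genuine care is the normal-form argument converting \emph{one nonzero entry per row} into a signed permutation matrix together with the equivariance of $\pi$; the remainder is bookkeeping with the orbit relation $A_{\gamma i,\gamma j}=A_{ij}$.
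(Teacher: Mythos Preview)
Your proof is correct, but it takes a genuinely different route from the paper's. You argue by contradiction: assuming a row has a single nonzero entry, you use the orbit relation $A_{\gamma i,\gamma j}=A_{ij}$ and transitivity on rows to conclude that \emph{every} row has a single nonzero entry, whence $A$ is a signed permutation matrix; the underlying permutation $\pi$ is then $G$-equivariant, forcing $G_1$ and $G_2$ to be conjugate. The paper instead works directly: fixing a nonzero entry $a_j$ in the column indexed by the coset $G_1$, it produces an element $g\in g_jG_2g_j^{-1}\setminus G_1$; such a $g$ moves the column $G_1$ to a different column $gG_1$ while fixing the row $g_jG_2$, so the equivariance identity $A(ge_1^1)=gA(e_1^1)$ exhibits a second nonzero entry $a_j$ in that same row. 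Transitivity on rows then propagates this to every row. In short, the paper \emph{constructs} the second nonzero entry in each row using the non-conjugacy hypothesis at the outset, whereas you deduce the $G$-set isomorphism as a structural consequence of having only one nonzero entry per row and invoke non-conjugacy only at the end. Your approach is slightly longer but arguably more conceptual, since it makes transparent exactly what a ``one-entry-per-row'' equivariant integral matrix must be; the paper's argument is shorter and more explicit about where the extra entry comes from.
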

\begin{proof}
    Since the groups are finite and \(G_1,G_2\) are not conjugate in \(G\), we know that for every \(g\in G\), there is \(h\in gG_2g^{-1}\) such that \(h\notin G_1\).  In particular, letting \(e_1^i\) be the basis element corresponding to the coset \(G_i\in G/G_i\) and \(g_1,...,g_n\) be a complete set of coset representatives for \(G/G_2\), then if \(Ae_1^1 = \sum_ia_ig_ie_1^2\) and \(a_j\neq 0\), there is \(g\in g_jG_2g_j^{-1}\) such that \(g\notin G_1\), so that \(ge_1^1\neq e_1^1\) and \(A(ge_1^1)=gA(e_1^1) = a_jg_je_1^2+\sum_{i\neq j}a_igg_ie_1^2\). In particular, the \(j^{th}\) row of \(A\) has at least two non-zero components.  By \(G\)-equivariance, we conclude that any row with a non-zero component must have multiple nonzero components.  But then \(A\) is invertible, so every row has at least one non-zero component.
   
\end{proof}

\begin{lemma}\label{mnsidentification}
Call a sublattice of the standard lattice in Euclidean space \textbf{normal} if it has a basis consisting of integral multiples of the standard basis.  The maximal normal sublattice \(\mathcal{L}\) of the lattice given by the \(\mathbb{Z}\)-span of the columns of \(M\in\text{GL}(r,\mathbb{Q})\cap M(r,\mathbb{Z})\) is \(\bigoplus_im_i\mathbb{Z}\), where \(m_i\) it the least positive integer \(m\) such that \(mM^{-1}e_i\in\mathbb{Z}^r\).
\end{lemma}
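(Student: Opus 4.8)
The plan is to reduce the lattice-theoretic claim to a coordinatewise divisibility condition. Write $\Lambda$ for the lattice generated by the columns of $M$, so that $\Lambda = M\mathbb{Z}^r$. Since $M \in \mathrm{GL}(r,\mathbb{Q}) \cap \mathrm{M}(r,\mathbb{Z})$, this $\Lambda$ is a full-rank sublattice of $\mathbb{Z}^r$. First I would observe that, by definition, any normal sublattice has the form $\bigoplus_{i \in S} n_i \mathbb{Z} e_i$ for some subset $S \subseteq \{1,\dots,r\}$ and positive integers $n_i$: a basis consisting of integral multiples of standard basis vectors must involve pairwise distinct coordinate directions in order to be linearly independent.

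The key step is the elementary equivalence
$$ n e_i \in \Lambda \iff n M^{-1} e_i \in \mathbb{Z}^r, $$
which is immediate upon applying $M^{-1}$ to both sides, since $\Lambda = M\mathbb{Z}^r$. The set $I_i = \{ n \in \mathbb{Z} \mid n M^{-1} e_i \in \mathbb{Z}^r \}$ is a subgroup of $\mathbb{Z}$, being closed under addition and negation because $M^{-1} e_i$ is a fixed rational vector; hence $I_i = m_i \mathbb{Z}$, where $m_i$ is exactly its least positive element (concretely, the least common multiple of the denominators of the entries of the $i$-th column $M^{-1} e_i$ of $M^{-1}$). Thus $n e_i \in \Lambda$ if and only if $m_i \mid n$.

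With this in hand I would conclude as follows. The lattice $\bigoplus_i m_i \mathbb{Z}$ is normal, and since each generator $m_i e_i$ lies in $\Lambda$ by construction, it is contained in $\Lambda$. Conversely, if $\bigoplus_{i \in S} n_i \mathbb{Z} \subseteq \Lambda$ is any normal sublattice of $\Lambda$, then for each $i \in S$ we have $n_i e_i \in \Lambda$, so $m_i \mid n_i$ by the previous paragraph, whence $n_i \mathbb{Z} \subseteq m_i \mathbb{Z}$ and therefore $\bigoplus_{i\in S} n_i \mathbb{Z} \subseteq \bigoplus_i m_i \mathbb{Z}$. This shows that $\bigoplus_i m_i \mathbb{Z}$ contains every normal sublattice of $\Lambda$ while being itself normal and contained in $\Lambda$; hence it is the unique maximum, and in particular the maximal normal sublattice $\mathcal{L}$.

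I do not anticipate a genuine obstacle: the entire content is the observation that the normality constraint decouples the problem across coordinates, reducing it to the single-variable fact that $\{n \mid nv \in \mathbb{Z}^r\}$ is an ideal of $\mathbb{Z}$ for a fixed rational vector $v$. The only point demanding a moment's care is confirming that \emph{maximal} here really means \emph{maximum} — that the normal sublattices of $\Lambda$ admit a largest element rather than merely incomparable maximal ones — which the containment argument of the last paragraph settles directly.
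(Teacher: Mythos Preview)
Your proof is correct and follows essentially the same line as the paper's: the paper's short proof hinges on the same key equivalence $me_i\in M\mathbb{Z}^r\iff mM^{-1}e_i\in\mathbb{Z}^r$, and then identifies $m_i$ as the least positive such $m$. Your version is simply more explicit, spelling out the ideal structure of $I_i$ and the maximum-versus-maximal point that the paper leaves implicit.
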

\begin{proof}
    The \(i^{th}\) component of any element of \(\mathcal{L}\) is an integer multiple of the least positive \(m\) such that \(me_i\in\mathcal{L}\), since \(\mathcal{L}\) is a normal sublattice.  However, \(me_i\in\mathcal{L}\) if and only if \(mM^{-1}e_i\in\mathbb{Z}^r\).
\end{proof}  

\begin{lemma}\label{ZnIso}
    Let \(p\) be a rational prime that splits completely in integrally equivalent number fields \(K_i\), \(i=1,2\), and set \(\mathcal{L}_{i,p}=T_i(\mathbb{Q}_p)/U_{i,p}\).  Identifying \(\mathcal{L}_{i,p}\) with \(\mathbb{Z}^n\), the automorphism \(\mathbb{Z}^n\rightarrow\mathbb{Z}^n\) determined by the linear isomorphism \(\mathcal{L}_{1,p}\rightarrow\mathcal{L}_{2,p}\) induced by \(\varphi_p\) in equation \ref{local} is precisely the transpose of the matrix expressing the \(G\)-equivariant linear isomorphism \(A:\mathbb{Z}[ G/G_{K_2}]\rightarrow\mathbb{Z}[G/G_{K_1}]\) with bases given by the cosets \(G/G_{K_i}\).
\end{lemma}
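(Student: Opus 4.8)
The plan is to exploit the contravariant equivalence between $\mathbb{Q}_p$-tori and their character modules recalled in Section \ref{tori}, thereby reducing the claim to the standard perfect duality between the character and cocharacter lattices of a \emph{split} torus. First I would observe that, since $p$ splits completely in each $K_i$, the decomposition group of a place of the Galois closure over $p$ acts trivially on $\Omega_i = G/G_{K_i}$; hence $T_{i,\mathbb{Q}_p}$ is split, and $T_i(\mathbb{Q}_p) = \prod_{\nu \mid p} K_{i,\nu}^\times = (\mathbb{Q}_p^\times)^n$ (cf. equation \ref{local}), where the $n$ factors are indexed by the places $\nu \mid p$, which biject with the cosets in $\Omega_i$. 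Because $K_{i,\nu} = \mathbb{Q}_p$ for each such $\nu$, we get $U_{i,p} = (\mathbb{Z}_p^\times)^n$, and the coordinatewise $p$-adic valuation identifies $\mathcal{L}_{i,p} = T_i(\mathbb{Q}_p)/U_{i,p}$ with $\mathbb{Z}^n$.

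Next I would make this identification canonical by recognizing $\mathcal{L}_{i,p}$ as the cocharacter lattice $X_*(T_{i,\mathbb{Q}_p})$: the valuation basis, one generator per place $\nu \mid p$ represented by a uniformizer in the $\nu$-factor, is exactly the cocharacter $\lambda_\nu$ including $\mathbb{G}_m$ as the $\nu$-th factor. By Lemma \ref{chargroup} the character lattice is $X^*(T_{i,\mathbb{Q}_p}) = \mathbb{Z}\Omega_i$ with the cosets as basis, and the collection $\{\lambda_\nu\}$ is precisely the dual basis under the perfect pairing $X^* \times X_* \to \mathbb{Z}$.

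With these identifications in place, the torus isomorphism $\varphi \colon T_1 \to T_2$ corresponds under the contravariant equivalence to the character-module map $\varphi^* = A \colon \mathbb{Z}\Omega_2 \to \mathbb{Z}\Omega_1$, while the induced map on $\mathbb{Q}_p$-points is $\varphi_p$, which descends on the valuation quotients to the cocharacter map $\varphi_* \colon X_*(T_1) \to X_*(T_2)$. The concluding step is the formal verification that $\varphi_*$ is the adjoint of $\varphi^*$ with respect to the pairings: expanding $\langle \chi, \varphi_* \lambda \rangle = \langle \varphi^* \chi, \lambda \rangle$ in the coset basis and its dual shows that the matrix of $\varphi_* \colon \mathcal{L}_{1,p} \to \mathcal{L}_{2,p}$, in the valuation bases, is the transpose of the matrix $A$, as claimed.

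I expect the main obstacle to be bookkeeping rather than anything conceptual: one must track the contravariance of the character functor carefully enough that passing to $\mathbb{Q}_p$-points yields the cocharacter map, hence exactly the transpose of $A$ and not, say, its inverse-transpose; and one must confirm that the valuation basis on $\mathcal{L}_{i,p}$ really is dual to the coset basis on $\mathbb{Z}\Omega_i$. Both points reduce to the explicit description of $\Res_{K_i/\mathbb{Q}}\mathbb{G}_m$ over its splitting field already in use in Section \ref{tori}.
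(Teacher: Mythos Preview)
Your argument is correct and lands on the same conclusion as the paper, but the packaging differs. The paper computes by hand: it writes $T_i(\mathbb{Q}_p)=\Hom(\mathbb{Q}_p[X_i],\mathbb{Q}_p)$, notes that $\varphi(f)=f\circ A'$, and then reads off $\varphi(f)(x_{2j})=(\alpha_1,\dots,\alpha_n)^{\mathrm{col}_j(A)}$ before taking $p$-adic valuations to obtain $m\mapsto A^Tm$. You instead invoke the structural fact that for a split torus the valuation map identifies $T(\mathbb{Q}_p)/T(\mathbb{Z}_p)$ with the cocharacter lattice $X_*(T)$, and then use the adjunction $\langle\chi,\varphi_*\lambda\rangle=\langle\varphi^*\chi,\lambda\rangle$ to conclude that the induced map is the transpose of the character map $A$. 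Your route is cleaner and makes the appearance of the transpose conceptually inevitable, while the paper's explicit computation has the virtue of being self-contained and not requiring the reader to know the valuation-to-cocharacter identification. The bookkeeping worry you flag (transpose versus inverse-transpose) is exactly what the paper's explicit formula $\varphi(f)(x_{2j})=\alpha^{\mathrm{col}_j(A)}$ resolves directly, so if you want to bulletproof your version you could simply insert that one-line check.
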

\begin{proof}
A map of sets \(s:S_1\rightarrow S_2\) determines an extension to a linear map \(\mathbb{Q}_p[S_1]\rightarrow\mathbb{Q}_p[S_2]\), which we also refer to as \(s\).  Let \(X_i\) be the character group of \(T_i\), and suppose \(g_{i1},...,g_{in}\in G\) is a full set of representatives for \(G/G_i\), with \(x_{ij}\in X_i\) the preimage of \(g_{ij}G_i\) under the isomorphism \(\varphi_i:X_i\rightarrow\mathbb{Z}[G/G_i]\).  Since \(p\) splits completely in \(K_i\), we know \(T_i(\mathbb{Q}_p)=\text{Hom}(\mathbb{Q}_p[X_i],\mathbb{Q}_p)\), and a morphism \(f:\mathbb{Q}_p[X_i]\rightarrow\mathbb{Q}_p\) is uniquely determined by the \(n\)-tuple \((f(x_{i1}),...,f(x_{in}))\in(\mathbb{Q}_p^\times)^n\).  Denote by \(\varphi\) the isomorphism \(\text{Hom}(\mathbb{Q}_p[X_1],\mathbb{Q}_p)\rightarrow\text{Hom}(\mathbb{Q}_p[X_2],\mathbb{Q}_p)\), and let \(A':X_2\rightarrow X_1\) be given by \(A'=\varphi_1^{-1}\circ A\circ\varphi_2\).  Observe that \(\varphi(f)=f\circ A'\), so that, if \(f\in\text{Hom}(\mathbb{Q}_p[X_1],\mathbb{Q}_p)\) is given by \(f(x_{1j})=\alpha_j\), then \(\varphi(f)(x_{2j})=(\alpha_1,...,\alpha_n)^{\text{col}_j(A)}\), where for vectors \(a=(a_1,...,a_n),b=(b_1,...,b_n)\), we are writing \(a^b=a_1^{b_1}\cdots a_n^{b_n}\).  Now, write \(\alpha_i=p^{m_i}\alpha_i'\), where \(\alpha_i'\) is a unit.  Passing to the quotient \((\mathbb{Q}_p^\times)^n/(\mathbb{Z}_p^\times)^n\simeq\mathbb{Z}^n\), we see that the image of \(\alpha=(\alpha_1,...,\alpha_n)\) is identified with \(m=(m_1,...,m_n)\), while the image of \((\alpha^{\text{col}_1(A)},...,\alpha^{\text{col}_n(A)})\) is identified with \(A^Tm\).
\end{proof}

\noindent Recall from Corollary \ref{discrdiv} that a rational prime \(p\) is unramified in \(L_1\) if and only if it is unramified in \(L_2\).  With \(N_{i,p}\) and \(U_{i,p}\) as in Corollary \ref{discrdiv}, let  \(\mathcal{L}_{i,p}'=N_{i,p}/U_{i,p}\) and \(\mathcal{L}_{i,p}=T_i(\mathbb{Q}_p)/U_{i,p}\).  Identifying \(\mathcal{L}_{i,p}\) with \(\mathbb{Z}^n\), view \(\mathcal{L}_{i,p}'\) as a sublattice.  Note that the orders of the relative decomposition groups for \(L_i/K_i\) over a rational prime \(p\) unramified in \(L_i\) are exactly the positive multiples of the standard basis vectors forming a basis of the largest normal sublattice in \(\mathcal{L}_{i,p}'\), by equation \ref{localgroups}.  Though \(\mathcal{L}_{i,p}'\) certainly can be a normal sublattice in \(\mathcal{L}_{i,p}\), this is by no means the case in general (c.f. \cite{tate_class_2008} Chapters VII and X).

\begin{theorem}\label{notwKeq}
     Given non-isomorphic integrally equivalent number fields, there exist corresponding abelian extensions which are not weakly Kronecker equivalent.
\end{theorem}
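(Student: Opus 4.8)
The goal is to produce, for any non-isomorphic integrally equivalent $K_1,K_2$ over $F$, a single prime $p$ and corresponding abelian extensions $L_i/K_i$ so that $\gcd(S_{L_1}(p))\neq\gcd(S_{L_2}(p))$. By Theorem~\ref{noexwKreq}, it suffices to arrange that $\gcd(S_{L_1}(p))=1$ while $\gcd(S_{L_2}(p))>1$ (or vice versa). The strategy is to choose $p$ splitting completely in both $K_i$ and then build the local norm subgroups $N_{i,p}$ by hand, using the lattice dictionary of Lemma~\ref{ZnIso} and Lemma~\ref{mnsidentification} to read off the relative decomposition data.

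**Key steps.** First I would fix a rational prime $p$ that splits completely in $K_1$ (hence in $K_2$, since the $K_i$ are arithmetically equivalent), so that locally $T_i(\mathbb{Q}_p)=\prod_{\nu\mid p}K_{i,\nu}^\times\simeq(\mathbb{Q}_p^\times)^n$ with $n=[K_i:F]$ factors. Passing to $\mathcal{L}_{i,p}=T_i(\mathbb{Q}_p)/U_{i,p}\simeq\mathbb{Z}^n$, Lemma~\ref{ZnIso} identifies the isomorphism $\varphi_p$ induced on these lattices with $A^T$, where $A\in\GL_n(\mathbb{Z})$ is the integral Gassmann matrix $\mathbb{Z}[G/G_{K_2}]\to\mathbb{Z}[G/G_{K_1}]$. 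Next I would choose the sublattice $\mathcal{L}'_{1,p}\subset\mathcal{L}_{1,p}$ defining $L_1$ to be a \emph{normal} sublattice with $\gcd$ of its standard multiples equal to $1$ — for instance the sublattice spanned by $e_1,\dots,e_{n-1}$ together with $\ell e_n$ for a suitable prime $\ell$; then $\gcd(S_{L_1}(p))=1$ because one standard multiple equals $1$. By Grunwald--Wang such an $L_1/K_1$ realizing this prescribed local norm group at $p$ exists, and $L_2$ is then forced as the corresponding extension with $\mathcal{L}'_{2,p}=A^T\mathcal{L}'_{1,p}$.

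The crux is then to show that the transported lattice $A^T\mathcal{L}'_{1,p}$ has $\gcd$ of \emph{its} maximal normal sublattice strictly greater than $1$, i.e. its image under $\mathcal{L}_{2,p}\simeq\mathbb{Z}^n$ meets every coordinate axis only in multiples of $\ell$. This is exactly where Lemma~\ref{A} enters: since the triple is non-trivial, every row of $A$ (equivalently every column of $A^T$) has at least two nonzero entries, so $A^T$ genuinely mixes coordinates and cannot send a coordinate-aligned sublattice of full-but-one-axis shape to another such sublattice without introducing a uniform factor. Concretely, I would compute the maximal normal sublattice of $A^T\mathcal{L}'_{1,p}$ via Lemma~\ref{mnsidentification} — the $i$th standard multiple is the least $m$ with $m(A^T M)^{-1}e_i\in\mathbb{Z}^n$, where $M$ is the matrix of generators of $\mathcal{L}'_{1,p}$ — and verify that for the chosen $\ell$ (taken large, or taken to divide an appropriate minor of $A$) every such $m$ is divisible by $\ell$, forcing $\gcd(S_{L_2}(p))>1$.

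**Main obstacle.** The delicate point is the lattice-transport computation: I must choose the defining data $\ell$ and the shape of $\mathcal{L}'_{1,p}$ so that mixing by $A^T$ provably \emph{destroys} the presence of a unit coordinate on the $L_2$ side while \emph{preserving} it on the $L_1$ side. The content of Lemma~\ref{A} guarantees the columns of $A^T$ are not standard basis vectors, which is what makes the $\gcd$ jump possible, but turning ``every row has $\geq 2$ nonzero entries'' into a clean divisibility statement about the maximal normal sublattice of $A^T\mathcal{L}'_{1,p}$ is the heart of the argument. I expect one must either pick $\ell$ dividing a fixed nonzero off-diagonal entry of $A$, or argue by $\ell$-adic reduction that the image lattice is contained in a proper coordinate subgroup modulo $\ell$, and then invoke Theorem~\ref{noexwKreq} to conclude the $L_i$ are not weakly Kronecker equivalent.
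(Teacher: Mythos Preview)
Your overall strategy is exactly the paper's: pick a completely split prime $p$, use Lemma~\ref{ZnIso} to transport a hand-built normal sublattice $\mathcal{L}'_{1,p}$ through $A^T$, and read off the $L_2$ splitting type from the maximal normal sublattice via Lemma~\ref{mnsidentification}. The gap you flag is real, and your proposed shape $\mathrm{diag}(1,\ldots,1,\ell)$ does not close it. With that choice, $M^{-1}e_i$ has only its last coordinate non-integral, equal to $\ell^{-1}(A^{-1})_{in}$; so $m_i=1$ whenever $(A^{-1})_{in}=0$. You would therefore need the $n$th column of $A^{-1}$ to be \emph{entirely} nonzero, and Lemma~\ref{A} only gives that each \emph{row} has at least two nonzero entries---not that any column is nowhere zero. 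Your suggested remedies do not help: taking $\ell$ to divide an entry of $A$ goes in the wrong direction (you need coprimality, not divisibility), and the $\ell$-adic reduction reduces to precisely the same column-nonvanishing condition.

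The paper's fix is to reverse the shape: take $\mathcal{L}'_{1,p}=\mathbb{Z}\oplus q\mathbb{Z}\oplus\cdots\oplus q\mathbb{Z}$, i.e.\ $M=A^T\mathrm{diag}(1,q,\ldots,q)$, with $q$ a prime coprime to every cofactor of $A$. Then $M^{-1}e_i$ has its $j$th coordinate equal to $q^{-1}(A^{-1})_{ij}$ for every $j\geq 2$, so $q\mid m_i$ as soon as \emph{some} $(A^{-1})_{ij}$ with $j\geq 2$ is nonzero. Since $A^{-1}$ is itself $G$-equivariant, Lemma~\ref{A} applies to it and guarantees each row of $A^{-1}$ has at least two nonzero entries; hence at least one of them sits in a column $j\geq 2$. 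This yields $m_i=q$ for every $i$, so $\gcd(S_{L_2}(p))=q$ while $\gcd(S_{L_1}(p))=1$, and Theorem~\ref{noexwKreq} finishes. In short: use ``one free coordinate, the rest scaled by $q$'' rather than ``one scaled coordinate, the rest free,'' and choose $q$ coprime to the cofactors---this is precisely what makes the row condition of Lemma~\ref{A} sufficient.
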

\begin{proof}
    Suppose \(K_1,K_2\) are non-isomorphic, integrally equivalent number fields whose Galois closure over \(\mathbb{Q}\) has group \(G\), and let \(A:\mathbb{Z}[G/G_{K_2}]\rightarrow\mathbb{Z}[G/G_{K_1}]\) be a \(G\)-equivariant isomorphism inducing a correspondence of the abelian extensions of the \(K_i\).  If \(p\) is an odd rational prime that splits completely in the \(K_i\), by a Grunwald-Wang argument (for instance, intersecting appropriate subgroups of \(C_{K_1}\) guaranteed to exist by Theorem 6  in Chapter X of \cite{tate_class_2008}), given a prime \(q\) coprime to every cofactor of \(A\), there is an abelian extension \(L_1\) of \(K_1\) in which \(p\) is unramified with \(\mathcal{L}_{1,p}'=\mathbb{Z}\bigoplus q\mathbb{Z}\bigoplus\cdots\bigoplus q\mathbb{Z}\).  In particular, \(\mathcal{L}_{1,p}'\) is equal to its maximal normal sublattice, and \(S_{L_1}(p)=\{\{1,q,...,q\}\}\).  Now, let \(M=A^T\text{diag}(1,q,...,q)\).  By Lemma \(\ref{ZnIso}\), we know that \(\mathcal{L}_{2,p}'\) is given by the \(\mathbb{Z}\)-span of the columns of \(M\), so the splitting type of \(p\) in \(L_2\) is given by the set of least positive integers \(m_i\) such that \(m_iM^{-1}e_i\) is integral, by Lemma \ref{mnsidentification}.  Since \(A\) is \(G\)-equivariant, so is \(A^{-1}\).  Furthermore, because \(K_1\not\simeq K_2\), every row of \(A^{-1}\) has multiple non-zero entries, by Lemma \ref{A}. If \(C\) denotes the cofactor matrix of \(A\), we know that \(A^{-1}=\pm C^T\), so every column of \(C\) has multiple non-zero entries.  Now, \(M^{-1}=\pm \text{diag}(1,q^{-1},...,q^{-1})C\), and every column of \(C\) has multiple non-zero entries, so \(m_iM^{-1}e_i\) is integral if and only if \(q|m_i\), by choice of \(q\), so the splitting type of \(p\) in \(L_2\) is \(\{\{q,...,q\}\}\).  Therefore, \(\gcd(S_{L_1}(p))=1\), while \(\gcd(S_{L_2}(p))=q\), so the \(L_i\) are not weakly Kronecker equivalent, by Theorem \ref{noexwKreq}.
\end{proof}

\begin{remark}
    We know that a non-trivial correspondence of abelian extensions cannot preserve arithmetic equivalence classes (c.f. Theorem \ref{solthm}).  We have also just seen that a correspondence of abelian extensions induced by a non-trivial integral equivalence of number fields cannot preserve weak Kronecker equivalence classes.  The question remains whether or not this is a general phenomenon.  In other words, letting \(\mathcal{K}(F)\) denote the weak Kronecker class of a number field \(F\), if \(K_1,K_2\) are number fields admitting a bijective correspondence of abelian extensions \(\varphi: L_1/K_1\mapsto \varphi(L_1/K_1)\) such that \(\mathcal{K}(\varphi(L_1))=\mathcal{K}(L_1)\) for every abelian extension \(L_1\) of \(K_1\), does it follow that \(K_1\simeq K_2\)?
\end{remark}

\section{Diagrams in Homology and Cohomology}\label{homology}
Given an integral Gassmann triple \((\Gamma,\Gamma_1,\Gamma_2)\) and a \(\Gamma\)-module \(A\), Arapura et. al proved that there are isomorphisms in cohomology \(H^*(\Gamma_1,A)\simeq H^*(\Gamma_2,A)\) compatible with corestriction to and restriction from \(H^*(\Gamma,A)\), where the \(\Gamma_i\)-module structure on \(A\) is \(\text{Res}_{\Gamma_i}^\Gamma\)(A)  \cite{arapura_integral_2019}.  We show that there are in fact compatible isomorphisms \(\varphi: H(\Gamma_1,A)\rightarrow H(\Gamma_2,A)\), where \(H\) denotes either \(H_*\) or \(H^*\).  Furthermore, for \(\Gamma_N\) a finite index normal subgroup of \(\Gamma\) contained in \(\Gamma_1\cap\Gamma_2\), and \(A\) a finite abelian group of order coprime to \([\Gamma:\Gamma_i]\), we demonstrate that \(\varphi\) is also compatible with restriction and corestriction to and from, respectively, \(H(\Gamma_N,A)\). If \(U_i\subset H_1(\Gamma_i)\), \(i=1,2\) are index \(t<\infty\) subgroups such that \(\varphi: H_1(\Gamma_1)\rightarrow H_1(\Gamma_2)\) restricts to an isomorphism \(U_1\rightarrow U_2\), we say that \(U_1\) and \(U_2\) \textbf{correspond}. Furthermore, if \(\Gamma_i'\) is the preimage of \(U_i\) in \(\Gamma_i\) under the natural projection \(\Gamma_i\rightarrow H_1(\Gamma_i)\), then we say \(\Gamma_1'\) and \(\Gamma_2'\) \textbf{correspond} if \(U_1\) and \(U_2\) do.  We show in Theorem \ref{gthm} that corresponding \(\Gamma_1'\) and \(\Gamma_2'\) have the same normal core in \(\Gamma\), assuming \(t\) is coprime to \([\Gamma:\Gamma_i]\).  Theorem \ref{g1} is then proved as a corollary.  In Proposition \ref{finalapp}, we conclude with an arithmetic application of the techniques in this section. Throughout, \([\Gamma:\Gamma_i]<\infty\), \(\bigotimes=\bigotimes_\mathbb{Z}\), and if coefficients are not specified for a (co)homology group, the coefficients are \(\mathbb{Z}\) with trivial action.
\begin{lemma}\label{comp}
    If \(\Omega_1,\Omega_2\) are finite and transitive \(G\)-sets, then an isomorphism \(\varphi:\mathbb{Z}\Omega_1\rightarrow\mathbb{Z}\Omega_2\) of \(\mathbb{Z}G\)-modules, or its negative, is natural with respect to both the diagonal embeddings \(\Delta_i:\mathbb{Z}\rightarrow\mathbb{Z}\Omega_i:n\mapsto n\sum\Omega_i\) and the augmentations \(\varepsilon_i:\mathbb{Z}\Omega_i\rightarrow\mathbb{Z}:\sum_{\omega\in\Omega_i}n_\omega\omega\mapsto\sum_{\omega\in\Omega_i}n_\omega\).  Precisely, either \(\varphi\circ\Delta_1=\Delta_2\) and \(\varepsilon_2\circ\varphi=\varepsilon_1\), or \((-\varphi)\circ\Delta_1=\Delta_2\) and \(\varepsilon_2\circ(-\varphi)=\varepsilon_1\).
\end{lemma}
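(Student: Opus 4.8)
The plan is to exploit transitivity of the $G$-action twice---once to locate the image of $\sum\Omega_1$, and once to pin down the augmentation---and then to reconcile the two resulting sign ambiguities with a single evaluation. Throughout write $\alpha_i = \sum\Omega_i$ and $N = |\Omega_1| = |\Omega_2|$, the equality of cardinalities being forced by $\mathbb{Z}\Omega_1 \simeq \mathbb{Z}\Omega_2$, since isomorphic free modules have equal rank and $\rk(\mathbb{Z}\Omega_i) = |\Omega_i|$.

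First I would establish the diagonal naturality up to sign, i.e. $\varphi(\alpha_1) = \pm\alpha_2$. Because $G$ acts transitively on each $\Omega_i$, a $G$-fixed element of $\mathbb{Z}\Omega_i$ must have coefficients constant across the single orbit, so the submodule of $G$-invariants is exactly the rank-one lattice $\mathbb{Z}\alpha_i$. As $\varphi$ is a $\mathbb{Z}G$-isomorphism it carries invariants to invariants, so $\varphi(\alpha_1) = n\alpha_2$ and $\varphi^{-1}(\alpha_2) = m\alpha_1$ for integers $n,m$; composing gives $nm = 1$, whence $n = \pm 1$. This is precisely the eigenvalue argument already used in the proof of Theorem \ref{thanks}.

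Next I would treat the augmentations. A $G$-homomorphism $\mathbb{Z}\Omega_i \to \mathbb{Z}$ (with trivial action on the target) is constant on $G$-orbits, hence constant by transitivity, so $\Hom_G(\mathbb{Z}\Omega_i, \mathbb{Z}) \cong \mathbb{Z}$ with generator $\varepsilon_i$. Precomposition with $\varphi$ gives an isomorphism $\Hom_G(\mathbb{Z}\Omega_2, \mathbb{Z}) \to \Hom_G(\mathbb{Z}\Omega_1, \mathbb{Z})$ of these rank-one groups, which therefore sends the generator $\varepsilon_2$ to $\pm\varepsilon_1$; that is, $\varepsilon_2 \circ \varphi = \epsilon\,\varepsilon_1$ for some $\epsilon \in \{\pm 1\}$.

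Finally I would reconcile the two signs. Writing $\varphi(\alpha_1) = n\alpha_2$ and $\varepsilon_2\circ\varphi = \epsilon\,\varepsilon_1$, I evaluate $\varepsilon_2\circ\varphi$ at $\alpha_1$ in two ways: from the first step $\varepsilon_2(\varphi(\alpha_1)) = \varepsilon_2(n\alpha_2) = nN$, while from the second step $\varepsilon_2(\varphi(\alpha_1)) = \epsilon\,\varepsilon_1(\alpha_1) = \epsilon N$. Since $N > 0$, this forces $n = \epsilon$. Hence either $n = \epsilon = 1$, giving $\varphi\circ\Delta_1 = \Delta_2$ and $\varepsilon_2\circ\varphi = \varepsilon_1$, or $n = \epsilon = -1$, in which case $(-\varphi)(\alpha_1) = \alpha_2$ and $\varepsilon_2\circ(-\varphi) = \varepsilon_1$, which is exactly the dichotomy claimed. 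The only genuinely delicate point is this coordination of the two independent sign ambiguities, and it is settled entirely by the single evaluation at $\alpha_1$ together with the positivity of $N$.
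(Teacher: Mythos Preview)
Your proof is correct and follows essentially the same three-step approach as the paper: identify the $G$-invariants of $\mathbb{Z}\Omega_i$ as $\mathbb{Z}\alpha_i$ to get $\varphi(\alpha_1)=\pm\alpha_2$, pass to the dual (the paper phrases this via $A^T$ on $\mathbb{Z}\Omega_i^*$, you via $\Hom_G(\mathbb{Z}\Omega_i,\mathbb{Z})$, which is the same thing) to get $\varepsilon_2\circ\varphi=\pm\varepsilon_1$, and then evaluate $\varepsilon_2\circ\varphi\circ\Delta_1$ at $1$ to force the two signs to agree. The only cosmetic difference is that you justify the unit eigenvalue via $\varphi^{-1}$ rather than via integrality of $\det A$, which is equally valid.
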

\begin{proof}
    Let \(A\) be the matrix expressing the isomorphism \(\varphi\) with respect to the bases \(\Omega_i\). Observe as in the proof of Proposition \ref{thanks} that, by \(G\)-equivariance, \((1,...,1)\) is an eigenvector of \(A\), and therefore \(A(1,...,1)=\pm(1,...,1)\).  Furthermore, letting \(\mathbb{Z}\Omega_i^*\) denote the dual \(G\)-module to \(\mathbb{Z}\Omega_i\) consisting of all \(\mathbb{Z}\)-linear maps \(\mathbb{Z}\Omega_i\rightarrow\mathbb{Z}\), with \(G\)-action given by \(gf(\omega)=f(g^{-1}\omega)\), for \(f\in\mathbb{Z}\Omega_i^*, \omega\in\Omega_i\), \(g\in G\), then the induced \(\mathbb{Z}G\)-module isomorphism \(\varphi^*:\mathbb{Z}\Omega_2^*\rightarrow\mathbb{Z}\Omega_1^*\) is expressed by \(A^T\) with respect to the dual bases \(\Omega_i^*\).  We conclude that \((1,...,1)\) is an eigenvector of \(A^T\).  Observe that \((1,...,1)\in\mathbb{Z}\Omega_i^*\) is precisely \(\varepsilon_i\), so \(\varepsilon_2\circ \varphi=\pm\varepsilon_1\).  Finally, note that if \(\varphi\circ\Delta_1=c\Delta_2\), and \(\varepsilon_2\circ\varphi=c'\varepsilon_1\), then $$c|\Omega_2|=(\varepsilon_2\circ\varphi\circ\Delta_1)(1)=c'|\Omega_1|.$$
    Since \(|\Omega_1|=|\Omega_2|\), we conclude \(c=c'\).
\end{proof}
\begin{corollary}\label{blemma}
     If \((\Gamma,\Gamma_1,\Gamma_2)\) is an integral Gassmann triple, then, given a \(\Gamma\)-module \(A\), there is a commutative diagram
$$\label{hom}
    \begin{tikzcd}
      & H(\Gamma, A)
      \arrow[dr, bend left, "Res_{\Gamma_2}^\Gamma"]
      \arrow[dl, bend right, "Res_{\Gamma_1}^\Gamma"']
      & \\
       H(\Gamma_1, A)\arrow[ur, bend right, "Cor^\Gamma_{\Gamma_1}"]\arrow[rr, bend right, "\simeq"'] & 
          & H(\Gamma_2, A)\arrow[ul, bend left, "Cor^\Gamma_{\Gamma_2}"']\\
          &&
    \end{tikzcd}
$$
where \(H\) denotes either \(H^*\) or \(H_*\), and the \(\Gamma_i\)-module structure on \(A\) is \(\text{Res}_{\Gamma_i}^\Gamma\)(A).
\end{corollary}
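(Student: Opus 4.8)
The plan is to transport the whole picture to the (co)homology of the ambient group $\Gamma$ via Shapiro's Lemma, where the naturality recorded in Lemma \ref{comp} applies verbatim. Throughout write $\Omega_i=\Gamma/\Gamma_i$, so that the integral Gassmann hypothesis supplies a $\mathbb{Z}\Gamma$-isomorphism $\varphi\co\mathbb{Z}\Omega_1\to\mathbb{Z}\Omega_2$, and recall that $\Delta_i$ and $\varepsilon_i$ denote the diagonal and augmentation as in Lemma \ref{comp}.

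First I would assemble the two standard module-theoretic inputs. Since $[\Gamma:\Gamma_i]<\infty$, induction and coinduction agree, $\Ind_{\Gamma_i}^\Gamma\simeq\CoInd_{\Gamma_i}^\Gamma$, and the projection formula gives a natural $\mathbb{Z}\Gamma$-isomorphism
$$\Ind_{\Gamma_i}^\Gamma\Res_{\Gamma_i}^\Gamma A\simeq A\otimes\mathbb{Z}\Omega_i,$$
where the right-hand side carries the diagonal $\Gamma$-action. Combined with Shapiro's Lemma, this produces isomorphisms
$$H(\Gamma_i,A)\simeq H(\Gamma,A\otimes\mathbb{Z}\Omega_i),\qquad i=1,2,$$
valid simultaneously for $H=H^*$ and $H=H_*$. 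Note also that $\mathrm{id}_A\otimes\Delta_i$ and $\mathrm{id}_A\otimes\varepsilon_i$ are genuine $\mathbb{Z}\Gamma$-module maps, since $\sum\Omega_i$ is $\Gamma$-invariant.

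The heart of the argument is to identify restriction and corestriction under these isomorphisms. The claim is that, uniformly in both variances, $\Res_{\Gamma_i}^\Gamma$ corresponds to the map on $H(\Gamma,-)$ induced by $\mathrm{id}_A\otimes\Delta_i\co A\to A\otimes\mathbb{Z}\Omega_i$, while $\Cor^\Gamma_{\Gamma_i}$ corresponds to the map induced by $\mathrm{id}_A\otimes\varepsilon_i\co A\otimes\mathbb{Z}\Omega_i\to A$. This is the compatibility of the Shapiro isomorphism with the unit $A\to\CoInd_{\Gamma_i}^\Gamma\Res_{\Gamma_i}^\Gamma A$ and the counit $\Ind_{\Gamma_i}^\Gamma\Res_{\Gamma_i}^\Gamma A\to A$ of the adjunctions; under the projection formula these become exactly $\Delta_i$ and $\varepsilon_i$. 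I would confirm the identification in degree zero as a sanity check: on invariants, $\Delta_i$ sends a generator of $\mathbb{Z}$ to $\sum\Omega_i$ (the image of the iso on $H^0$), and on coinvariants $\varepsilon_i$ induces the identity on $H_0(\Gamma,\mathbb{Z}\Omega_i)\simeq\mathbb{Z}$, matching the known behaviour of restriction and corestriction; consistency is further pinned down by $\varepsilon_i\circ\Delta_i=|\Omega_i|=[\Gamma:\Gamma_i]$, which mirrors $\Cor\circ\Res=[\Gamma:\Gamma_i]$.

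With Step two in hand I would define the bottom isomorphism $\Phi\co H(\Gamma_1,A)\to H(\Gamma_2,A)$ as the composite of the two Shapiro isomorphisms with the map induced by $\mathrm{id}_A\otimes\varphi$ (using $-\varphi$ instead should Lemma \ref{comp} so dictate). Lemma \ref{comp} provides $\varphi\circ\Delta_1=\Delta_2$ and $\varepsilon_2\circ\varphi=\varepsilon_1$; tensoring with $\mathrm{id}_A$ and applying $H(\Gamma,-)$ gives $(\mathrm{id}\otimes\varphi)_*\circ(\mathrm{id}\otimes\Delta_1)_*=(\mathrm{id}\otimes\Delta_2)_*$ and $(\mathrm{id}\otimes\varepsilon_2)_*\circ(\mathrm{id}\otimes\varphi)_*=(\mathrm{id}\otimes\varepsilon_1)_*$. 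Translating these two identities back through Step two reads precisely as $\Phi\circ\Res_{\Gamma_1}^\Gamma=\Res_{\Gamma_2}^\Gamma$ and $\Cor^\Gamma_{\Gamma_2}\circ\Phi=\Cor^\Gamma_{\Gamma_1}$, i.e.\ the commutativity of both triangles. The main obstacle I anticipate is exactly Step two: carefully verifying that, in the homological as well as the cohomological variance, restriction and corestriction pass through Shapiro's Lemma to the maps induced by the diagonal and the augmentation; once this dictionary is fixed, the rest is a formal consequence of Lemma \ref{comp} and functoriality.
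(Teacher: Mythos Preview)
Your proposal is correct and follows essentially the same route as the paper: transport everything via Shapiro's Lemma to $H(\Gamma,A\otimes\mathbb{Z}\Omega_i)$, identify $\Res$ and $\Cor$ with the maps induced by $\Delta_i$ and $\varepsilon_i$, and then invoke Lemma~\ref{comp}. The paper dispatches your ``Step two'' concern by a direct citation to Brown (III.5.6 and III.9.A), whereas you sketch the adjunction argument yourself, but the architecture is identical.
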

\begin{proof}
    Letting \(\Omega_i=\Gamma/\Gamma_i\), recall that restriction \(\text{Res}^\Gamma_{\Gamma_i}\) (resp. corestriction \(\text{Cor}^\Gamma_{\Gamma_i}\)) in (co)homology is obtained by tensoring the diagonal embedding \(\Delta_i\) (resp. augmentation morphism \(\varepsilon_i\)) with \(A\), then applying \(H(\Gamma,-)\) and Shapiro's Lemma (c.f. \cite{brown_cohomology_1982} III.5.6 and III.9.A). By Lemma \ref{comp}, there is a commutative diagram of \(\mathbb{Z}\Gamma\)-module morphisms
    $$
    \begin{tikzcd}
      & \mathbb{Z}
      \arrow[dr, bend left, "\Delta_2"]
      \arrow[dl, bend right, "\Delta_1"']
      & \\
        \mathbb{Z}\Omega_1\arrow[ur, bend right, "\varepsilon_1"]\arrow[rr, bend right, "\simeq"'] & 
          & \mathbb{Z}\Omega_2\arrow[ul, bend left, "\varepsilon_2"']\\
          &&
    \end{tikzcd}
    $$
   Tensoring with \(A\) then applying \(H(\Gamma,-)\) and Shapiro's Lemma yields the desired result.
\end{proof}

\noindent From here on, \((\Gamma, \Gamma_1, \Gamma_2)\) is an integral Gassmann triple, \(m=[\Gamma:\Gamma_i]\), and \(A\) is a \(\Gamma\)-module, unless stated otherwise. We use \(\varphi\) to denote the isomorphism \(H(\Gamma_1,A)\rightarrow H(\Gamma_2,A)\) from Corollary \ref{blemma}.

\begin{lemma}\label{modq}
    Let \(A\) be a finite abelian group of order coprime to \(m\), and let \(\Gamma_N\) be a finite index normal subgroup of \(\Gamma\) contained in \(\Gamma_1\cap\Gamma_2\). There is a commutative diagram of the following form.
    $$
    \begin{tikzcd}
        H(\Gamma_1,A)\arrow[rr, bend left, "\varphi"]\arrow[dr, bend right, "Res_{\Gamma_N}^{\Gamma_1}"']&& H(\Gamma_2,A)\arrow[dl, bend left, "Res_{\Gamma_N}^{\Gamma_2}"]\\
        &H(\Gamma_N,A)\arrow[ul, bend right, "Cor^{\Gamma_1}_{\Gamma_N}"]\arrow[ur, bend left, "Cor^{\Gamma_2}_{\Gamma_N}"']&\\
        &&
    \end{tikzcd}
    $$
\end{lemma}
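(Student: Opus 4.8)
The plan is to exhibit the diagram as the result of tensoring with $A$ and applying $H(\Gamma,-)$ to a compatible diagram of $\mathbb{Z}\Gamma$-module maps, just as in the proof of Corollary \ref{blemma}, but now also threading in the subgroup $\Gamma_N$. The key observation is that restriction and corestriction between $\Gamma_i$ and $\Gamma_N$ are, via Shapiro's Lemma, induced by natural maps between the permutation modules $\mathbb{Z}[\Gamma/\Gamma_N]$ and $\mathbb{Z}[\Gamma/\Gamma_i]$. Indeed, since $\Gamma_N \subset \Gamma_i$, there is a surjection of $G$-sets $\Gamma/\Gamma_N \to \Gamma/\Gamma_i$, which induces a projection $\pi_i \colon \mathbb{Z}[\Gamma/\Gamma_N] \to \mathbb{Z}[\Gamma/\Gamma_i]$ realizing $\Res^{\Gamma_i}_{\Gamma_N}$ after applying $H(\Gamma,-)$; the corresponding transfer (summing over the fibers, i.e. over $\Gamma_i/\Gamma_N$) gives a section-like map $\tau_i \colon \mathbb{Z}[\Gamma/\Gamma_i] \to \mathbb{Z}[\Gamma/\Gamma_N]$ realizing $\Cor^{\Gamma_i}_{\Gamma_N}$.

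\emph{First} I would set $\Omega_i = \Gamma/\Gamma_i$ and $\Omega_N = \Gamma/\Gamma_N$, and let $A_0 \colon \mathbb{Z}\Omega_1 \to \mathbb{Z}\Omega_2$ be the $\mathbb{Z}\Gamma$-isomorphism from Corollary \ref{blemma} (adjusting sign as in Lemma \ref{comp} if necessary). The goal reduces to producing a commutative diagram of $\mathbb{Z}\Gamma$-module maps
$$
\begin{tikzcd}
\mathbb{Z}\Omega_1 \arrow[rr, "A_0"] \arrow[dr, "\pi_1"'] && \mathbb{Z}\Omega_2 \arrow[dl, "\pi_2"] \\
& \mathbb{Z}\Omega_N &
\end{tikzcd}
$$
together with its transpose version for the $\tau_i$, and then tensoring with $A$ and applying $H(\Gamma,-)$ and Shapiro's Lemma yields the claimed diagram. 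So the entire content is the commutativity $\pi_2 \circ A_0 = \pi_1$ (for restriction) and $A_0 \circ \tau_1 = \tau_2$ (for corestriction) as maps of $\mathbb{Z}\Gamma$-modules — the first of these on the nose fails for a generic $G$-equivariant isomorphism, so this is exactly where the coprimality hypothesis on $|A|$ must enter.

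\emph{The hard part will be} that $A_0$ is only an isomorphism of the full permutation modules $\mathbb{Z}\Omega_i$, and there is no reason for it to commute strictly with the projections to $\mathbb{Z}\Omega_N$; the fibers $\Gamma_i/\Gamma_N$ over a coset need not be matched by $A_0$. The remedy is to not insist on commutativity over $\mathbb{Z}$, but to pass to coefficients in $A$, a finite abelian group of order coprime to $m = [\Gamma:\Gamma_i] = |\Omega_i|$. After tensoring with $A$, the augmentation and diagonal maps from Lemma \ref{comp} become split (multiplication by $m$ is invertible on $A$), so I expect that the discrepancy between $\pi_2 \circ A_0$ and $\pi_1$, which factors through the kernel/cokernel of the augmentation-type maps and is therefore annihilated by $m$, vanishes identically on $H(\Gamma_N, A)$. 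Concretely I would argue that the difference $\pi_2\circ A_0 - \pi_1 \colon \mathbb{Z}\Omega_1 \to \mathbb{Z}\Omega_N$ lands in the augmentation-trivial part and, upon applying $H(\Gamma, - \otimes A)$ and Shapiro, induces the zero map because the relevant composite is multiplication by a factor of $m$, hence invertible-times-zero on $A$-coefficients; the same $m$-divisibility, run in the dual direction using the eigenvector computation of Lemma \ref{comp}, handles the corestriction triangle. Thus the coprimality of $|A|$ and $m$ is precisely what upgrades the up-to-$m$-torsion compatibility of $A_0$ with the sub/quotient structure into an exact commutativity after applying $H(\Gamma_N, A)$.
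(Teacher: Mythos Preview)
Your plan to realize everything at the level of permutation modules is natural, but the step you flag as ``the hard part'' is a genuine gap, and the sketch you give does not close it. First, a minor point: in the conventions of Corollary~\ref{blemma} the projection $\mathbb{Z}[\Gamma/\Gamma_N]\to\mathbb{Z}[\Gamma/\Gamma_i]$ induces $\Cor^{\Gamma_i}_{\Gamma_N}$ and the transfer $\mathbb{Z}[\Gamma/\Gamma_i]\to\mathbb{Z}[\Gamma/\Gamma_N]$ induces $\Res^{\Gamma_i}_{\Gamma_N}$, so your labels are swapped. More seriously, the difference map (say $D=A_0\circ p_1-p_2:\mathbb{Z}\Omega_N\to\mathbb{Z}\Omega_2$) does indeed land in $\ker\varepsilon_2$, but $\ker\varepsilon_2$ is a free $\mathbb{Z}$-module and is not annihilated by $m$; after tensoring with $A$ the sequence $0\to\ker\varepsilon_2\otimes A\to\mathbb{Z}\Omega_2\otimes A\to A\to 0$ splits, so the inclusion induces an \emph{injection} on $H(\Gamma,-)$, not zero. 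Unwinding Shapiro, $D$ induces $\Cor^{\Gamma_2}_{\Gamma_N}\circ\bigl(\sum_j a_j\,c_{g_j}-\mathrm{id}\bigr)$ on $H(\Gamma_N,A)$, where $A_0(\Gamma_1)=\sum_j a_j g_j\Gamma_2$ and $c_{g_j}$ is conjugation by $g_j$; since the $g_j$ run over representatives of $\Gamma/\Gamma_2$, there is no module-level reason for this to vanish, and your ``multiplication by a factor of $m$'' claim does not hold.

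The paper bypasses the module-level comparison entirely and works directly in (co)homology using the Mackey double coset formula together with the compatibility of $\varphi$ with $\Res^\Gamma_{\Gamma_i}$ and $\Cor^\Gamma_{\Gamma_i}$ already established in Corollary~\ref{blemma}. Because $\Gamma_N\trianglelefteq\Gamma$ and $\Gamma_N\subset\Gamma_i$, each double coset space $\Gamma_i\backslash\Gamma/\Gamma_N$ has exactly $m$ elements and every summand in the Mackey formula collapses, giving the identities $\Res^\Gamma_{\Gamma_i}\circ\Cor^\Gamma_{\Gamma_N}=m\cdot\Cor^{\Gamma_i}_{\Gamma_N}$ and $\Res^\Gamma_{\Gamma_N}\circ\Cor^\Gamma_{\Gamma_i}=m\cdot\Res^{\Gamma_i}_{\Gamma_N}$. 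Applying $\varphi$ and Corollary~\ref{blemma} then yields $m\cdot\varphi\circ\Cor^{\Gamma_1}_{\Gamma_N}=m\cdot\Cor^{\Gamma_2}_{\Gamma_N}$ and $m\cdot\Res^{\Gamma_1}_{\Gamma_N}=m\cdot\Res^{\Gamma_2}_{\Gamma_N}\circ\varphi$, and coprimality of $m$ with $|A|$ lets you cancel the $m$. The factor of $m$ arises from counting double cosets, not from any divisibility property of a module-level discrepancy.
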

\begin{proof}
Because \(\Gamma_N\trianglelefteq \Gamma\), and \(\Gamma_N\subset\Gamma_i\), \(i=1,2\), the action of \(\Gamma_N\) on \(\Gamma_i\backslash\Gamma\) by right multiplication is trivial, so that \(|\Gamma_i\backslash\Gamma/\Gamma_N|=m\).  Now, we know that
$$\text{Res}^\Gamma_{\Gamma_i}\text{Cor}^\Gamma_{\Gamma_N}(z)=\sum\text{Cor}^{\Gamma_i}_{\Gamma_N}\text{Res}^{\Gamma_N}_{\Gamma_N}(gz),$$ where the sum is over representatives \(g\in\Gamma\) for the set of double cosets \(\Gamma_i\backslash\Gamma/\Gamma_N\) (c.f. Proposition 9.5 in Chapter III of \cite{brown_cohomology_1982}).  Clearly, \(\text{Res}^{\Gamma_N}_{\Gamma_N}(gz)=gz\), and, because \(\Gamma_N\trianglelefteq\Gamma\), we know that \(\text{Cor}^{\Gamma_i}_{\Gamma_N}(gz)=\text{Cor}^{\Gamma_i}_{\Gamma_N}(z)\), so \(\text{Res}^\Gamma_{\Gamma_i}\text{Cor}^\Gamma_{\Gamma_N}(gz)=m\text{Cor}^{\Gamma_i}_{\Gamma_N}(z)\).  Since \(\varphi\circ\text{Res}^{\Gamma}_{\Gamma_1}=\text{Res}^{\Gamma}_{\Gamma_2}\), we conclude \(m\varphi(\text{Cor}_{\Gamma_N}^{\Gamma_1}(z))=m\text{Cor}_{\Gamma_N}^{\Gamma_2}(z)\).  But then \(m\) is coprime to \(|A|\), so \(\varphi(\text{Cor}_{\Gamma_N}^{\Gamma_1}(z))=\text{Cor}_{\Gamma_N}^{\Gamma_2}(z)\), as desired.\\\\
Let \(z\in H(\Gamma_1,C)\), and recall that \(\text{Cor}_{\Gamma_1}^\Gamma(z)=\text{Cor}_{\Gamma_2}^\Gamma(\varphi(z))\), so that \begin{equation}\label{sheesh}
\text{Res}^\Gamma_{\Gamma_N}\text{Cor}_{\Gamma_1}^\Gamma(z)=\text{Res}^\Gamma_{\Gamma_N}\text{Cor}_{\Gamma_2}^\Gamma(\varphi(z)).
\end{equation}
Similarly as before, we know that 
$$\text{Res}^\Gamma_{\Gamma_N}\text{Cor}^\Gamma_{\Gamma_i}(z)=\sum\text{Cor}^{\Gamma_N}_{\Gamma_N}\text{Res}^{g\Gamma_ig^{-1}}_{\Gamma_N}(gz).$$ Now, \(\text{Res}^{g\Gamma_ig^{-1}}_{\Gamma_N}(gz)=\text{Res}^{\Gamma_i}_{\Gamma_N}(z),\) and \(\text{Cor}^{\Gamma_N}_{\Gamma_N}\) is the identity map, so we conclude \(\text{Res}^\Gamma_{\Gamma_N}\text{Cor}^\Gamma_{\Gamma_i}(z)=m\text{Res}^{\Gamma_i}_{\Gamma_N}(z)\).  Therefore, by equation \ref{sheesh}, we have \(m\text{Res}^{\Gamma_1}_{\Gamma_N}(z)=m\text{Res}^{\Gamma_2}_{\Gamma_N}(\varphi(z))\), and we see that \(\text{Res}^{\Gamma_1}_{\Gamma_N}(z)=\text{Res}^{\Gamma_2}_{\Gamma_N}(\varphi(z))\).
\end{proof}

\noindent Recall that restriction and corestriction are induced by chain maps. With the hypotheses of Lemma \ref{modq} and \(A\) an abelian group with trivial \(\Gamma\)-action, viewing \(H_*(\Gamma_i)\bigotimes A\) as a subgroup of \(H_*(\Gamma_i,A)\) as in the the universal coefficient theorem, the diagram
$$
    \begin{tikzcd}
        H_*(\Gamma_1)\bigotimes A\arrow[rr, bend left, "\varphi\otimes id_A"]\arrow[dr, bend right]&& H_*(\Gamma_2)\bigotimes A\arrow[dl, bend left]\\
        &H_*(\Gamma_N)\bigotimes A\arrow[ul, bend right]\arrow[ur, bend left]&\\
        &&
    \end{tikzcd}
$$
\noindent commutes, by Lemma \ref{modq} and naturality of the inclusion \(H_*(\Gamma_i)\bigotimes A\rightarrow H_*(\Gamma_i,A)\).  The upward maps are \(\text{Cor}^{\Gamma_i}_{\Gamma_N}\bigotimes \text{id}_{A}\), and the downward maps are \(\text{Res}^{\Gamma_i}_{\Gamma_N}\bigotimes \text{id}_{A}\). Recall that a finite abelian group is naturally a commutative ring with identity, and let \(A\) and \(B\) be abelian groups, with \(A\) finite. For us, the map \(B\rightarrow B\bigotimes A\) is always the abelian group homomorphism \(b\mapsto b\otimes 1\). We may now prove the main theorem of this section.
\begin{theorem}\label{gthm}
  If \(\Gamma_i'\subset\Gamma_i\), \(i=1,2\) are corresponding index \(t\) subgroups, then \(\Gamma_1'\) and \(\Gamma_2'\) have the same normal core in \(\Gamma\) if \((m,t)=1\).
\end{theorem}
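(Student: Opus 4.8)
The plan is to realize both normal cores inside a single canonical normal subgroup of $\Gamma$ and then to detect them via the characters $\chi_i\colon\Gamma_i\to Q_i:=H_1(\Gamma_i)/U_i$, whose kernels are exactly the $\Gamma_i'$, transporting one to the other through $\varphi$. First I would observe that, since $(\Gamma,\Gamma_1,\Gamma_2)$ is an integral (hence rational) Gassmann triple, the permutation characters of $\Gamma$ acting on $\Gamma/\Gamma_1$ and on $\Gamma/\Gamma_2$ coincide; as the kernel of a transitive permutation representation is the set of elements whose fixed-point count equals the degree, the two actions have the same kernel. Thus $\bigcap_{g\in\Gamma}g\Gamma_1 g^{-1}=\bigcap_{g\in\Gamma}g\Gamma_2 g^{-1}=:K_0$, a subgroup that is normal in $\Gamma$, of finite index (so $\Gamma/K_0$ embeds in a symmetric group on $m$ letters), and contained in $\Gamma_1\cap\Gamma_2$.

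Next I would reduce the theorem to an equality of two subgroups of $K_0$. Because $\Gamma_i'$ is the preimage of $U_i\subseteq H_1(\Gamma_i)$, it is normal in $\Gamma_i$, so $g\Gamma_i'g^{-1}$ depends only on the coset $g\Gamma_i$; choosing coset representatives $g_1,\dots,g_m$ for $\Gamma/\Gamma_i$, the normal core of $\Gamma_i'$ in $\Gamma$ is $N_i=\bigcap_{j}g_j\Gamma_i'g_j^{-1}\subseteq\bigcap_j g_j\Gamma_i g_j^{-1}=K_0$. Since $N_i$ is the largest normal subgroup of $\Gamma$ contained in $\Gamma_i'$ and $N_i\subseteq K_0$, it is also the largest normal subgroup of $\Gamma$ contained in $W_i:=K_0\cap\Gamma_i'=\ker(\chi_i|_{K_0})$. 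Hence it suffices to prove $W_1=W_2$, i.e. that $\chi_1|_{K_0}$ and $\chi_2|_{K_0}$ have the same kernel. To set this up I would factor $\chi_i|_{K_0}$ as $K_0\to H_1(K_0)\xrightarrow{\,\Cor^{\Gamma_i}_{K_0}\,}H_1(\Gamma_i)\xrightarrow{\,q_i\,}Q_i$, where $\Cor^{\Gamma_i}_{K_0}$ is the inclusion-induced map on $H_1$ and $q_i$ is the quotient by $U_i$, and record that $\varphi$ carries $U_1$ onto $U_2$ and so descends to an isomorphism $\bar\varphi\colon Q_1\to Q_2$ with $q_2\circ\varphi=\bar\varphi\circ q_1$.

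The crux is to apply the double-coset computation underlying Lemma \ref{modq} with $\Gamma_N=K_0$, which is legitimate since $K_0$ is a finite-index normal subgroup of $\Gamma$ contained in $\Gamma_1\cap\Gamma_2$. Carried out with $\mathbb{Z}$-coefficients, that computation yields the identity $m\,\Cor^{\Gamma_2}_{K_0}(z)=m\,\varphi\bigl(\Cor^{\Gamma_1}_{K_0}(z)\bigr)$ in $H_1(\Gamma_2)$ for every $z\in H_1(K_0)$, where the only use of coprimality in Lemma \ref{modq}, namely the cancellation of $m$, is deliberately postponed. Composing with $q_2$ and invoking $q_2\circ\varphi=\bar\varphi\circ q_1$ gives $m\,(\chi_2|_{K_0})=m\,\bar\varphi\circ(\chi_1|_{K_0})$ as homomorphisms $H_1(K_0)\to Q_2$; since $(m,t)=1$ and $|Q_2|=t$, multiplication by $m$ is an automorphism of $Q_2$, so $\chi_2|_{K_0}=\bar\varphi\circ\chi_1|_{K_0}$. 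As $\bar\varphi$ is an isomorphism, $\ker(\chi_1|_{K_0})=\ker(\chi_2|_{K_0})$, that is $W_1=W_2$, and therefore $N_1=N_2$.

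The hard part will be the coprimality bookkeeping in the crux step: Lemma \ref{modq} is stated for finite coefficient modules of order prime to $m$ precisely so that the factor $m$ can be removed, whereas here I must run the same argument with integral coefficients, where $m$ genuinely survives in $H_1(\Gamma_2)$, and discard it only after mapping into $Q_2$, where it becomes invertible. A secondary point requiring care is the identification of $N_i$ first with $\bigcap_j g_j\Gamma_i'g_j^{-1}$ and then with the normal core of $W_i$, which rests on $\Gamma_i'\trianglelefteq\Gamma_i$, together with the fact that equal permutation characters make $K_0$ well defined independently of $i$. Without the hypothesis $(m,t)=1$ the final cancellation fails, matching the earlier remark that this coprimality assumption is only circumvented by working with the homological correspondence directly.
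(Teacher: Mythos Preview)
Your argument is correct and follows the same skeleton as the paper's proof: both identify the common normal core $K_0$ of the $\Gamma_i$ (the paper cites Perlis, you give the permutation-character argument directly), both invoke the Mackey double-coset identity from the proof of Lemma~\ref{modq} to obtain the relation $m\cdot\varphi\circ\Cor^{\Gamma_1}_{K_0}=m\cdot\Cor^{\Gamma_2}_{K_0}$, both use $(m,t)=1$ to remove the factor $m$, and both conclude by showing $K_0\cap\Gamma_1'=K_0\cap\Gamma_2'$ and then taking normal cores in $\Gamma$.

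The packaging differs. The paper passes through Lemma~\ref{modq} with finite coefficients $A\simeq\Gamma_i/\Gamma_i'$, then uses the universal-coefficient inclusion $H_1(\Gamma_i)\otimes A\hookrightarrow H_1(\Gamma_i,A)$ and the injectivity of $A\to A\otimes A$, $a\mapsto a\otimes 1$, to read off that the composite $\Gamma_i\to A\otimes A$ has kernel $\Gamma_i'$. You instead run the same Mackey computation with $\mathbb{Z}$-coefficients, keep the surviving factor $m$ in $H_1(\Gamma_2)$, and cancel it only after projecting to the finite quotient $Q_2$ of order $t$. Your route is a little more elementary, since it bypasses the tensor-product and universal-coefficient bookkeeping; the paper's route has the advantage of making Lemma~\ref{modq} do the coprimality work once and for all, so that the theorem becomes a formal consequence of the tensored diagram. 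Either way the content is the same identity $\chi_2|_{K_0}=\bar\varphi\circ\chi_1|_{K_0}$, which is exactly what the remark following the paper's proof records.
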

\begin{proof}
    Suppose  \(\Gamma_i/\Gamma_i'\simeq A\) for \(A\) a finite abelian group of order coprime to \(m\), and fix projections \(\pi_i:H_1(\Gamma_i)\rightarrow A\) such that \(\pi_1=\pi_2\circ\varphi\) and \(\Gamma_i'\) is the preimage of \(\ker\pi_i\) under the natural map \(\Gamma_i\rightarrow H_1(\Gamma_i)\).
    Apply the functor \(-\bigotimes A\), viewing \(A\) as a trivial \(\Gamma\)-module, to the corresponding commutative diagram, and extend the result to the commutative diagram
    $$\label{gtohom}
    \begin{tikzcd}
        &\Gamma_1\arrow[r] & H_1(\Gamma_1)\bigotimes A\arrow[dd,"\varphi\otimes id_A"]  \arrow[dr]&\\
       N\arrow[ur, "\iota_1"]\arrow[dr, "\iota_2"']\arrow[r] &H_1(N)\bigotimes A\arrow[ur]\arrow[dr] &&A\bigotimes A \\
        &\Gamma_2\arrow[r] & H_1(\Gamma_2)\bigotimes A \arrow[ur] &\\
    \end{tikzcd}
    $$
    where \(\iota_i\) is inclusion, \(N\) is the normal core of \(\Gamma_i\) in \(\Gamma\), and the horizontal maps are of the form \(G\rightarrow H_1(G)\rightarrow H_1(G)\bigotimes A\).  The proof of Theorem 1 in \cite{perlis_equation_1977} guarantees that \(N\) is independent of choice of \(i=1,2\).  Since the map \(A\rightarrow A\bigotimes A:a\mapsto a\otimes 1\) is injective, and the diagram
    $$
    \begin{tikzcd}
    H_1(\Gamma_i)\arrow[d, "\pi_i"']\arrow[r] & H_1(\Gamma_i)\bigotimes A\arrow[d]\\
    A\arrow[r] & A\bigotimes A
    \end{tikzcd}
    $$
commutes for \(i=1,2\), the kernel of the map \(\Gamma_i\rightarrow A\bigotimes A\) is \(\Gamma_i'\), and we conclude  \(N\cap\Gamma_1'=N\cap\Gamma_2'\).  Now, if \(N_i\) is the normal core of \(\Gamma_i'\) in \(\Gamma\), we know that \(N_i\subset N\), so that \(N_i\) is equal to the normal core of \(N\cap\Gamma_i'\) in \(\Gamma\), and therefore \(N_1=N_2\).
\end{proof}
\begin{remark}
Letting \(\pi'_i\) be the projection \(\Gamma_i\rightarrow A\), note the argument above shows that in fact \(\pi'_1(n)=\pi'_2(n)\) for \(n\in N\) when \(|A|\) is coprime to \([\Gamma:\Gamma_i]\).  It is not clear to the author how one could draw a similar conclusion without the coprimality assumption.
\end{remark}
\noindent We can now easily prove Theorem \ref{g1}, which is a sort of geometric analog of Theorem \ref{P}.1.
\begin{proof}[\textbf{Proof of Theorem \ref{g1}}]
    Recall the usual identification \(H(\pi_1(M))=H(M)\) for a manifold with contractible universal cover, and observe that the map \(H_1(M')\rightarrow H_1(M)\) induced by a covering \(M'\rightarrow M\) is corestriction \(H_1(\pi_1(M'))\rightarrow H_1(\pi_1(M))\).  The subgroup of \(\pi_1(M)\) corresponding to the normal closure of \(M_i'\rightarrow M\) under the covering space Galois correspondence is precisely the normal core of \(\pi_1(M_i')\) in \(\pi_1(M)\).  The rest follows from Theorem \ref{gthm} and the fact that \((\pi_1(M), \pi_1(M_1), \pi_1(M_2))\) is an integral Gassmann triple.
\end{proof}
\noindent We conclude with an application to the arithmetic correspondence.  The proof is only sketched, as all the requisite ideas have been discussed in greater detail in this article.
\begin{proposition} \label{finalapp}
    If \(L_i/K_i\), \(i=1,2\) are corresponding abelian extensions of number fields \(K_1,K_2\) integrally equivalent over \(F\), letting \(K\) be the Galois closure of the \(K_i\) over \(F\), if \([L_i:K_i]\) is coprime to \([K_i:F]\), then \(L_1K=L_2K\).
\end{proposition}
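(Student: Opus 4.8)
The plan is to translate the statement into the Galois setting and then invoke the homological machinery of Theorem \ref{gthm}. Write $\Gamma = \Gal(F^s/F)$ and $\Gamma_i = \Gal(F^s/K_i)$, so $[\Gamma:\Gamma_i] = [K_i:F] =: m$ (the two indices agree, since integral equivalence implies arithmetic equivalence and hence equal degrees). Because $K_1, K_2$ are integrally equivalent over $F$, the triple $(\Gamma,\Gamma_1,\Gamma_2)$ is an integral Gassmann triple: the $\mathbb{Z}G$-module isomorphism $\mathbb{Z}[G/G_{K_1}]\simeq\mathbb{Z}[G/G_{K_2}]$ at the level of the Galois closure inflates to a $\mathbb{Z}\Gamma$-isomorphism of coset modules. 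Rather than work profinitely, I would pass to a finite Galois extension $\tilde K/F$ containing the compositum $L_1L_2$ (hence also $K$), and set $\tilde G = \Gal(\tilde K/F)$, $\Gamma_i = \Gal(\tilde K/K_i)$, $\Gamma_i' = \Gal(\tilde K/L_i)$, and $N = \Gal(\tilde K/K)$. The triple $(\tilde G,\Gamma_1,\Gamma_2)$ remains an integral Gassmann triple, and $N$ is the common normal core of the $\Gamma_i$ in $\tilde G$, by the argument from the proof of Theorem 1 in \cite{perlis_equation_1977} already invoked in Theorem \ref{gthm}.

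The central reduction is that $L_1K = L_2K$ if and only if $\Gamma_1'\cap N = \Gamma_2'\cap N$, since $\Gal(\tilde K/L_iK) = \Gamma_i'\cap N$. As $L_i/K_i$ is abelian, $\Gamma_i'$ contains the commutator subgroup of $\Gamma_i$ and descends to an index-$t$ subgroup $U_i\subset H_1(\Gamma_i) = \Gamma_i^{ab}$, where $t = [L_i:K_i]$; moreover the correspondence gives $\Gal(L_1/K_1)\simeq\Gal(L_2/K_2) =: A$ with $U_i = \ker(\Gamma_i^{ab}\to A)$. I would then apply the proof of Theorem \ref{gthm} verbatim: once $U_1, U_2$ are known to correspond under the isomorphism $\varphi\colon H_1(\Gamma_1)\to H_1(\Gamma_2)$ of Corollary \ref{blemma}, the coprimality hypothesis $(m,t)=1$ forces $N\cap\Gamma_1' = N\cap\Gamma_2'$, which is precisely the intermediate conclusion reached in that proof, and the proposition follows.

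The one point requiring genuine care — and the step I expect to be the main obstacle — is verifying that $U_1$ and $U_2$ do correspond, i.e. that $\varphi(U_1) = U_2$. Here $U_i$ is cut out by the class-field-theoretic correspondence of Section \ref{cae}, defined through the idele class group isomorphism $\varphi\colon C_{K_1}\to C_{K_2}$, whereas the $\varphi$ of Corollary \ref{blemma} is the homological isomorphism produced by Shapiro's lemma from the coset-module isomorphism. Both descend from the single $\mathbb{Z}\Gamma$-module isomorphism $A$ (equivalently, from the torus isomorphism $T_1\simeq T_2$, whose character lattices are the coset modules by Lemma \ref{chargroup}), so the task is to check that the Artin reciprocity isomorphisms $\widehat{C_{K_i}}\simeq\Gamma_i^{ab}$ intertwine the two induced maps. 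This is a naturality property of reciprocity with respect to the norm, inclusion, and conjugation maps of the relevant $G$-sets; granting it, the adelic and homological correspondences coincide and the argument closes. I would state this compatibility explicitly and sketch it, since its full verification is exactly the identification flagged in the footnote comparing the adelic and homological definitions of correspondence.
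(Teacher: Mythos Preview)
Your proposal is correct and follows essentially the same route as the paper: reduce to $\Gamma_{L_1}\cap\Gamma_K=\Gamma_{L_2}\cap\Gamma_K$, then run the argument of Theorem~\ref{gthm} once the adelic correspondence is known to be compatible with restriction and corestriction on abelianized Galois groups via Artin reciprocity. The only tactical difference is that the paper does not try to identify the adelic and Shapiro-induced isomorphisms; it instead shows directly---using Lemma~\ref{comp}, Theorem~\ref{idelenorm}, and an analogous computation for the scheme attached to the augmentation map---that the adelic isomorphism $C_{K_1}\simeq C_{K_2}$ already sits in the required Res/Cor diagram with $C_F$, and then transports this diagram to $\Gamma_{K_i}^{ab}$ by functoriality of the global Artin map, which is slightly more economical since the proof of Theorem~\ref{gthm} only needs \emph{some} isomorphism satisfying the diagram.
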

\begin{proof}
    Let \(K/F\) be a Galois extension with group \(G\) and subextension \(K'/F\). If \(\Omega=G/G_{K'}\), and \(j\) is the scheme corresponding to the augmentation map \(\mathbb{Z}\Omega\rightarrow\mathbb{Z}\), one can show that \(j(\mathbb{A}_F)\), modulo principle ideles, gives the map \(C_F\rightarrow C_{K'}\) induced by inclusion \(F\rightarrow K'\). This observation, along with Theorem \ref{idelenorm} and Lemma \ref{comp}, reveals that for integrally equivalent number fields \(K_1\) and \(K_2\), there is an isomorphism \(C_{K_1}\rightarrow C_{K_2}\) such that following diagram commutes.
    $$
    \begin{tikzcd}
      & C_F
      \arrow[dr, bend left]
      \arrow[dl, bend right]
      & \\
        C_{K_1}\arrow[ur, bend right]\arrow[rr, bend right, "\simeq"'] & 
          & C_{K_2}\arrow[ul, bend left]\\
          &&
    \end{tikzcd}
    $$
    where the upward arrows are idele class norms and the downward arrows are the maps induced by inclusion. Functoriality of the global Artin map (c.f. \cite{ramakrishnan_fourier_1999} Section 6.4) then guarantees commutativity of the following diagram.
    $$
    \begin{tikzcd}
      & \Gamma_F^{ab}
      \arrow[dr, bend left]
      \arrow[dl, bend right]
      & \\
        \Gamma_{K_1}^{ab}\arrow[ur, bend right]\arrow[rr, bend right, "\simeq"'] & 
          & \Gamma_{K_2}^{ab}\arrow[ul, bend left]\\
          &&
    \end{tikzcd}
    $$
    where, identifying abelianization with the functor \(H_1(-)\), the downward arrows are restriction and the upward arrows are corestriction in first homology.  From here, reason similarly as we have in this section to conclude that \(\Gamma_{L_1}\cap\Gamma_K=\Gamma_{L_2}\cap \Gamma_K\).
\end{proof}

\bibliography{cae2}
\bibliographystyle{alpha}

\end{document}